\begin{document}
\newcommand{\est}{e^{-t}}
\newcommand{\esst}{\sqrt{1-e^{-2t}}}
\newcommand{\bbu}{\mathbf{U}}
\newcommand{\bbv}{\mathbf{V}}
\newcommand{\boe}{{\boldsymbol \eta}}
\newcommand{\bos}{{\boldsymbol \sigma}}
\newcommand{\bu}{\mathbf{u}}
\newcommand{\bv}{\mathbf{v}}
\newcommand{\mc}{\mathcal{C}}
\newcommand{\om}{\overline{M}}
\newcommand{\oy}{\overline{Y}}
\newcommand{\oyp}{\overline{Y'}}
\newcommand{\oyy}{\overline{YY'}}
\newcommand{\ttb}{\mathbb{T}}

\title{Chaos, concentration, and multiple valleys}

\author{Sourav Chatterjee}
\thanks{Research partially supported by NSF grant DMS-0707054 and a Sloan Research Fellowship}
\address{\newline367 Evans Hall \#3860\newline
Department of Statistics\newline
University of California at Berkeley\newline
Berkeley, CA 94720-3860\newline
{\it E-mail: \tt sourav@stat.berkeley.edu}\newline 
{\it URL: \tt http://www.stat.berkeley.edu/$\sim$sourav}}

\keywords{Chaos, polymer, spin glass, GUE, fitness model, free field, Gaussian field, maximum, multiple valleys, fluctuation exponent}
\subjclass[2000]{60K35, 60G15, 82B44, 60G60, 60G70}

\begin{abstract}
Disordered systems are an important class of models in statistical mechanics, having the defining characteristic that the energy landscape is a fixed realization of a random field. Examples include various models of glasses and polymers. They also arise in other areas, like fitness models in evolutionary biology. The ground state of a disordered system is the state with minimum energy. The system is said to be chaotic if a small perturbation of the energy landscape causes a drastic shift of the ground state. We present a rigorous theory of chaos in disordered systems that confirms long-standing physics intuition about connections between chaos, anomalous fluctuations of the ground state energy, and the existence of multiple valleys in the energy landscape. Combining these results with mathematical tools like hypercontractivity, we establish the existence of the above phenomena 
in eigenvectors of GUE matrices, the Kauffman-Levin model of evolutionary biology, directed polymers in  random environment, a subclass of the generalized Sherrington-Kirkpatrick model of spin glasses, the discrete Gaussian free field, and continuous Gaussian fields on Euclidean spaces. 
We also list several open questions.
\end{abstract}

\maketitle

\setcounter{tocdepth}{1}
\tableofcontents

\section{Introduction}\label{intro}
Let us begin with a motivating example. Let $\bbg = (g_v)_{v\in \zz^2}$ be a collection of i.i.d.\ standard Gaussian random variables. A $(1+1)$-dimensional directed polymer of length $n$ is a sequence of $n$ adjacent points in $\zz^2$, beginning at the origin, such that each successive point is either to the right or above the previous point. The energy of a polymer $p = (v_0,\ldots,v_{n-1})$ in the Gaussian random environment $\bbg$ is defined as
\[
E(p) := -\sum_{i=0}^{n-1} g_{v_i}.
\] 
The `ground state' of the system is the polymer path with minimum energy, which we denote by $P$. One of the main goals of this paper is to understand a particular feature of the ground state, known as the   chaos property. It says, roughly, that a small perturbation of the environment gives rise to a new ground state that is almost disjoint from the original one.

There is a standard way to define a perturbation of a Gaussian environment. If $\bbg'$ is an independent copy of $\bbg$, and we define the perturbed environment $\bbg^t := \est \bbg + \esst \bbg'$, then $\bbg^t$ is again a standard Gaussian random environment. The parameter $t$ is a measure of the amount of perturbation. This definition arises naturally from running an Ornstein-Uhlenbeck flow at each vertex for time $t$. 

Formally, the property of chaos of the ground state means that there exists $t_0(n)$ such that $t_0(n) \ra 0$ and  $\sup_{t\ge t_0(n)} \ee|P\cap P^t| = o(n)$ as $n\ra \infty$, where $P^t$ is the minimum energy path in the environment $\bbg^t$, and $|P\cap P^t|$ is the number of vertices common to the two paths. The definition of `almost disjoint' in this way makes sense, because $P$ and $P^t$ are both of length $n$.

Although this is a widely studied phenomenon in the theoretical physics literature on directed polymers (see e.g.\ \cite{husehenleyfisher85}, \cite{zhang87}, \cite{mezard90}, \cite{fisherhuse91}, \cite{hwafisher94}, \cite{silveirabouchaud04}), there are no rigorous results. Using the techniques of this paper, we can prove the following theorem.
\begin{thm}\label{polyad}
Fix $n$, and let $t_0 = (\log n)^{-1/2}$. Then for all $t\ge t_0$,
\[
\ee|P\cap P^t|\le \frac{Cn}{\sqrt{\log n}},
\]
where $C$ is a universal constant.
\end{thm}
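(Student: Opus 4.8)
\emph{Proof strategy.}
The plan is to reduce the assertion to a superconcentration estimate for the ground state energy, and then to prove that estimate by hypercontractivity. Write
\[
M=M(\bbg):=-E(P)=\max_{p}\sum_{i=0}^{n-1}g_{v_i},
\]
the maximum being over the finitely many directed length-$n$ paths $p=(v_0,\dots,v_{n-1})$ from the origin. Then $M$ is a maximum of finitely many linear functionals $\bbg\mapsto\langle\mathbf 1_p,\bbg\rangle$, hence convex and $\sqrt n$-Lipschitz, and for almost every $\bbg$ the maximizing path is unique and $\nabla M(\bbg)=\mathbf 1_P$, the indicator vector of the ground state path. Consequently $\langle\nabla M(\bbg),\nabla M(\bbg^t)\rangle=|P\cap P^t|$, so the task is to bound $\ee\langle\nabla M(\bbg),\nabla M(\bbg^t)\rangle$.

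The first (soft) step is a Wiener-chaos computation. Let $M=\sum_{k\ge0}\pi_kM$ be the chaos decomposition of $M$ in $L^2$ of Gaussian space. Since $(\bbg,\bbg^t)$ is jointly Gaussian with correlation $e^{-t}$, Mehler's formula gives $\ee[M(\bbg)M(\bbg^t)]=\sum_{k\ge0}e^{-kt}\|\pi_kM\|_2^2$. Differentiating in $t$ and identifying $-e^{t}\tfrac{d}{dt}\ee[M(\bbg)M(\bbg^t)]$ with $\ee\langle\nabla M(\bbg),\nabla M(\bbg^t)\rangle$ via Gaussian integration by parts (equivalently, computing the latter directly from the chaos expansion, using that $\nabla\pi_kM$ lies in the degree-$(k-1)$ chaos with $\ee\|\nabla\pi_kM\|_2^2=k\|\pi_kM\|_2^2$; a routine smoothing accommodates the mere Lipschitz regularity of $M$) yields
\[
\ee|P\cap P^t|=\ee\langle\nabla M(\bbg),\nabla M(\bbg^t)\rangle=\sum_{k\ge1}k\,e^{-(k-1)t}\,\|\pi_kM\|_2^2 .
\]
An elementary estimate shows $k\,e^{-(k-1)t}(1-e^{-t})\le1$ for all integers $k\ge1$ and all $t\ge0$ (the maximum of $(1-x)kx^{k-1}$ over $x\in(0,1]$ equals $(1-1/k)^{k-1}<1$), so
\[
\ee|P\cap P^t|\le\frac1{1-e^{-t}}\sum_{k\ge1}\|\pi_kM\|_2^2=\frac{\mathrm{Var}(M)}{1-e^{-t}} .
\]
It therefore suffices to prove the superconcentration bound $\mathrm{Var}(M)\le Cn/\log n$.

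That bound is the crux, and I would derive it from Talagrand's $L^1$--$L^2$ inequality (a consequence of hypercontractivity),
\[
\mathrm{Var}(M)\le C\sum_v\frac{\|\partial_v M\|_2^2}{1+\log\big(\|\partial_v M\|_2/\|\partial_v M\|_1\big)} .
\]
Since $\partial_v M=\mathbf 1\{v\in P\}$ is an indicator, writing $p_v:=\Pr(v\in P)$ the right-hand side equals $C\sum_v p_v/\big(1+\tfrac12\log(1/p_v)\big)$, with $\sum_v p_v=\ee|P|=n$. If every $p_v$ were at most $n^{-\epsilon}$ this would immediately be $\le C'n/\log n$; but some vertices (for instance those near the origin) carry non-negligible probability, and the main obstacle is to show that the vertices with large $p_v$ contribute little to the sum. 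I would handle this as in the Benjamini--Kalai--Schramm argument for first-passage percolation: randomize the polymer's endpoint over a window of $\Theta(n)$ sites near the anti-diagonal and work with the resulting averaged energy, which differs from $M$ by only $O(\sqrt n)$ in $L^2$ (hence changes the variance negligibly) but for which a counting argument forces the heavy-vertex contribution down to $o(n/\log n)$. Controlling these influences is where essentially all the work lies; the passage from superconcentration to chaos above is routine.

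Finally, assembling the pieces: for $t\ge t_0=(\log n)^{-1/2}$ one has $1-e^{-t}\ge1-e^{-t_0}\ge t_0/2$ as soon as $t_0\le1$, whence
\[
\ee|P\cap P^t|\le\frac{\mathrm{Var}(M)}{1-e^{-t_0}}\le\frac{Cn/\log n}{t_0/2}=\frac{2Cn}{\sqrt{\log n}},
\]
and the finitely many small values of $n$ are absorbed into the universal constant using the trivial bound $\ee|P\cap P^t|\le\ee|P|=n$.
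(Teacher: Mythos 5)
Your reduction of the theorem to a superconcentration estimate is correct, and your derivation of the Tauberian bound
\[
\ee|P\cap P^t|=\ee\langle\nabla M(\bbg),\nabla M(\bbg^t)\rangle\le \frac{\var(M)}{1-e^{-t}}
\]
via the Wiener chaos decomposition and the elementary inequality $k x^{k-1}(1-x)\le 1$ is a valid and rather clean alternative to the paper's route (which represents $v=\ee(R(I^0,I^\tau))$ for exponential $\tau$ and then uses the monotonicity of $t\mapsto\ee(R(I^0,I^t))$, Lemma~\ref{ledoux}, to obtain the same bound in Theorem~\ref{varconv}). The two arguments rest on the same Gaussian-analysis facts; yours repackages the monotonicity as nonnegativity and decay of the chaos weights $k e^{-(k-1)t}\|\pi_k M\|_2^2$. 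Your final numerical assembly using $1-e^{-t_0}\ge t_0/2$ is also fine.

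The real content of the theorem, however, is the superconcentration bound $\var(M)\le Cn/\log n$, and this you only sketch, and the sketch has the key details wrong. First, the averaging must be over the \emph{starting point} of the polymer, not the endpoint: the polymer in this model already has a free endpoint (point-to-line), so there is nothing to randomize at that end, and the construction used in Section~\ref{polymer} considers $M_w$, the ground-state weight of polymers started from $w$, averaged over $w$ in a box $B$ near the origin. Second, the size of that box is a genuine trade-off that cannot be set to $\Theta(n)$ sites. The paper takes $B=\{(i,j):1\le i,j\le[n^{1/4}]\}$, so $|B|\approx\sqrt n$: this is small enough that Lemma~\ref{mmwbd} gives $\ee(M-M_w)^2\le C(|w|^2+|w|\sqrt n)\log n\le Cn^{3/4}\log n$ for $w\in B$, hence $\ee(M-\om)^2=o(n/\log n)$; and it is large enough that the averaged gradient $\partial_v\om$ has uniform sup-norm $O(n^{-1/4})$, which feeds into hypercontractivity (Lemma~\ref{hyper}) to give $\var(\om)\le Cn/\log n$. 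Had you taken a window of $\Theta(n)$ sites (side $\sim\sqrt n$), the replacement cost $\ee(M-M_w)^2$ would be of order $n\log n$ for $|w|\sim\sqrt n$, which swamps the target bound entirely. Moreover, the estimate $\ee(M-M_w)^2\lesssim(|w|^2+|w|\sqrt n)\log n$ is itself nontrivial: it is where all of Lemmas~\ref{mdiff}--\ref{mmwbd} are used, requiring a careful path-surgery and a combinatorial bound on the number of nearly-parallel directed paths (Lemma~\ref{rnklemma}), and it is precisely because of the directedness constraint that the BKS path-switching trick needs significant extra work here. So there is a genuine gap: the reduction is complete, but the variance bound --- which is where, as you yourself say, essentially all the work lies --- is not proved, and the averaging scheme as you describe it would not yield it.
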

\noindent Of course, this result only proves chaos in principle. The factor of $\sqrt{\log n}$ is too slowly growing to be of any practical significance, even when $n$ is of the order of the Avogadro number.

The key to our approach is a seemingly new connection between the fluctuations of the ground state energy and the stability of the minimum energy path. Suppose $\tau$ is an Exponential random variable with mean $1$, independent of all else. Then we have the relation
\[
\var(\min_p E(p)) = \ee|P\cap P^\tau|.
\]
This comes as a consequence of a far more general result, that we are going to describe in the following pages. Given this formula, there are still two tasks remaining: (a) to show that the variance is $o(n)$, and (b) to prove a Tauberian theorem that extracts a bound on $\ee|P\cap P^t|$ for fixed $t$ from a bound on $\ee|P\cap P^\tau|$. We carry out task (a) in Section \ref{polymer} and task (b) in Section \ref{proof:main} (more specifically, via Theorem \ref{varconv}, which is one of the main results of this paper). 

Let us now present our general framework, that encompasses the polymer model as a special case. We treat the energy landscape of the model as a giant Gaussian random vector, and prove general theorems about Gaussian vectors that imply results like Theorem \ref{polyad}. There are many other examples that fall into this framework; besides polymers, the ones that have been treated in this paper include spin glass models, random matrices, fitness models of evolutionary biology, the discrete Gaussian free field, and continuous Gaussian fields on Euclidean spaces. 

Let $S$ be a finite set and $\bbx = (X_i)_{i\in S}$ be a centered Gaussian random vector with possibly dependent coordinates. (In the context of the polymer example, think of $S$ as the set of all directed polymers of length $n$ starting at the origin, and $X_i = -E(i)$ for a path $i\in S$.) Let
\[
R(i,j) := \cov(X_i,X_j), \ \ \ \sigma^2 := \max_{i\in S} \var(X_i).
\]
(Again, for polymers $R(i,j) = |i\cap j|$, and $\sigma^2 = n$.)
We will often refer to $\bbx$ as a `Gaussian field'. The elements of $S$ will be alternately called `states' or `indices' or `sites' or `coordinates'. 
The two central objects of interest in this paper are (i) the maximum of the Gaussian field $\bbx$,
\[
M := \max_{i\in S} X_i,
\]
and (ii) the location of the maximum,
\[
I := \argmax_{i\in S} X_i.
\]
To ensure that $I$ is well-defined, we assume the non-degeneracy condition
\begin{equation}\label{nondeg}
\pp(X_i \ne X_j)=1 \ \text{ for each $i\ne j$.}
\end{equation}
We study $M$ through its mean and variance
\[
m := \ee(M), \ \ \ v := \var(M).
\]
Let us alert the reader that we will use the symbols $\bbx$, $M$, $m$, $v$, $R(i,j)$, and $I$ throughout the paper to mean what they stand for here, often without explicit reference to the above definitions. 

The following is a well-known result about the fluctuations of Gaussian maxima.
\begin{prop}\label{varcrude}
Irrespective of the correlation structure of the vector $\bbx$, we always have $v \le \sigma^2$. 
\end{prop}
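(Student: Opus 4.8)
The plan is to deduce the bound from the Gaussian Poincar\'e inequality: for any locally Lipschitz $f:\mathbb{R}^N\to\mathbb{R}$ and a standard Gaussian vector $Z$ in $\mathbb{R}^N$, one has $\var(f(Z))\le \ee|\nabla f(Z)|^2$. (This is itself proved, e.g., via the Ornstein--Uhlenbeck semigroup, which is the same flow that defines the perturbation $\bbx^t$ above.) First I would write $\bbx=AZ$, where $Z$ is standard Gaussian in $\mathbb{R}^N$ with $N=|S|$ and $A$ is any matrix with $AA^T=R$, the covariance matrix of $\bbx$; denote by $a_i$ the $i$-th row of $A$, so that $|a_i|^2=R(i,i)=\var(X_i)\le\sigma^2$ for every $i$.

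Then $M=f(Z)$ for $f(z):=\max_{i\in S}\langle a_i,z\rangle$, a maximum of finitely many linear functions, hence convex and Lipschitz, with $\nabla f(z)=a_{i(z)}$ at every point $z$ where $f$ is differentiable, $i(z)$ being the index achieving the maximum there. Such points form a set of full Lebesgue (hence full Gaussian) measure by Rademacher's theorem, and that is all the Poincar\'e inequality needs. Since $|\nabla f(z)|^2=|a_{i(z)}|^2\le\sigma^2$ wherever defined, the inequality gives $v=\var(M)\le\ee|\nabla f(Z)|^2\le\sigma^2$.

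To sidestep the non-differentiability of the maximum, one may instead apply Poincar\'e to the smooth surrogate $F_\beta(z):=\beta^{-1}\log\sum_{i\in S}e^{\beta\langle a_i,z\rangle}$. A direct computation yields $\nabla F_\beta(z)=\sum_{i\in S}p_i(z)\,a_i$ with weights $p_i(z)=e^{\beta\langle a_i,z\rangle}/\sum_{j}e^{\beta\langle a_j,z\rangle}$ summing to $1$, so convexity of $|\cdot|^2$ gives $|\nabla F_\beta(z)|^2\le\sum_i p_i(z)|a_i|^2\le\sigma^2$, whence $\var(F_\beta(Z))\le\sigma^2$ for every $\beta>0$. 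Because $0\le F_\beta(z)-f(z)\le\beta^{-1}\log|S|$ uniformly in $z$, we have $F_\beta(Z)\to M$ in $L^2$ as $\beta\to\infty$, so $\var(F_\beta(Z))\to v$ and the bound survives the limit.

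I do not expect a real obstacle here; the only delicate point is the differentiability issue just discussed, which is why I would either cite Rademacher's theorem or use the softmax regularization. I would also remark that a softer approach through Gaussian concentration (the Borell--TIS inequality) would only give $v\le C\sigma^2$ for an absolute constant $C$, losing the sharp constant $1$, so the Poincar\'e inequality is the natural tool.
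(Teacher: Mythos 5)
Your proof is correct, and it is the classical Poincar\'e-inequality argument that the paper attributes to Houdr\'e, Nash, Chernoff, Chen, and Houdr\'e--Kagan in its discussion immediately after Proposition~\ref{varcrude}. The paper itself does not spell out this route; instead, it later obtains Proposition~\ref{varcrude} as a one-line corollary of Lemma~\ref{varmax}, namely the exact identity $v = \ee(R(I^0,I^\tau))$ together with the pointwise Cauchy--Schwarz bound $R(I^0,I^\tau)\le\sigma^2$. Both routes rest on the same Ornstein--Uhlenbeck semigroup computation: indeed, Lemma~\ref{intparts} is the covariance interpolation formula $\var(f(\bby)) = \int_0^\infty e^{-t}\,\ee\langle\nabla f(\bby),\nabla f(\bby^t)\rangle\,dt$, and applying Cauchy--Schwarz inside the integral recovers precisely the Poincar\'e inequality you invoke. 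What the paper's version buys is a sharper intermediate statement---an identity for $v$ rather than an inequality, which is the engine behind the superconcentration--chaos equivalence---while what your version buys is directness: it requires only the standard Poincar\'e inequality as a black box, and your handling of the non-smooth maximum via Rademacher (or the softmax $F_\beta$) is exactly the standard way to make that rigorous. Your closing remark is also accurate: integrating the two-sided Borell--TIS tail only yields $v\le 4\sigma^2$, so the Poincar\'e route is indeed what gives the sharp constant $1$.
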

\noindent In words, this means that the order of fluctuations of the maximum cannot be larger than the order of the fluctuations of the most fluctuating coordinate. This inequality was proved by Houdr\'e~\cite{houdre95}, although the method of proof seems to be implicit in the much earlier work of Nash~\cite{nash58}, and the works of Chernoff~\cite{chernoff81}, Chen~\cite{chen82}, and  Houdr\'e and Kagan~\cite{houdrekagan95} on the so-called Poincar\'e inequality for the Gaussian measure.

There is a famous `advanced version' of the Poincar\'e inequality, called the Gaussian isoperimetric inequality, independently invented by Borell \cite{borell75} and Sudakov and Tsirelson \cite{sudakovtsirelson74}, that gives tail bounds instead of simply a variance inequality. A striking consequence of the isoperimetric inequality is the following result of Tsirelson, Ibragimov, and Sudakov~\cite{tis76}.
\begin{prop}\label{borell}
For any $r\ge 0$,
\[
\pp\bigl(M - m\ge r\bigr)\le e^{-r^2/2\sigma^2},
\]
and the same bound holds for $P(M-m \le -r)$ as well.
\end{prop}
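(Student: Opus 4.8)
The plan is to realize $M$ as a Lipschitz function of a standard Gaussian vector and then apply the concentration estimate that the Gaussian isoperimetric inequality yields for such functions. Factor the covariance matrix as $R=AA^{T}$ (for instance via Cholesky), with $A$ having rows $a_{i}$, $i\in S$; then $\bbx\overset{d}{=}A\bbg$ for a standard Gaussian $\bbg$ in $\rr^{|S|}$, so that $M\overset{d}{=}F(\bbg)$ with $F(x):=\max_{i\in S}\langle a_{i},x\rangle$, and $\|a_{i}\|^{2}=\var(X_{i})\le\sigma^{2}$ for every $i$. The first step is to record that $F$ is $\sigma$-Lipschitz for the Euclidean norm: if $F(x)\ge F(y)$ and $i^{*}$ attains the maximum in $F(x)$, then $F(x)-F(y)\le\langle a_{i^{*}},x-y\rangle\le\|a_{i^{*}}\|\,\|x-y\|\le\sigma\|x-y\|$, and the symmetric case is identical.

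Next I would feed this into the Gaussian isoperimetric inequality. Let $\mu$ be a median of $F(\bbg)$, write $\gamma$ for the standard Gaussian measure on $\rr^{|S|}$, and set $B:=\{x:F(x)\le\mu\}$, so $\gamma(B)\ge\tfrac12$. Because $F$ is $\sigma$-Lipschitz, the Euclidean $(r/\sigma)$-enlargement of $B$ is contained in $\{F\le\mu+r\}$; the isoperimetric inequality (Borell; Sudakov--Tsirelson) says that enlargements of a set of measure $\ge\tfrac12$ are at least as large as those of a half-space of measure $\tfrac12$, so $\gamma(\{F\le\mu+r\})\ge\Phi(r/\sigma)$ for $r\ge0$, where $\Phi$ is the standard normal distribution function. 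Thus $\pp(M\ge\mu+r)\le 1-\Phi(r/\sigma)\le e^{-r^{2}/2\sigma^{2}}$, and the same argument applied to $-F$ (also $\sigma$-Lipschitz) gives the matching lower-tail bound around $\mu$.

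The one genuine obstacle is passing from the median $\mu$ to the mean $m$ while keeping the optimal constant $1/(2\sigma^{2})$ in the exponent: a naive shift by $|m-\mu|\le\sigma\sqrt{2/\pi}$ spoils the bound for large $r$. The clean way around this is to sidestep the median entirely and obtain the mean version directly from the Gaussian logarithmic Sobolev inequality: applying it to $e^{\lambda F/2}$ produces a differential inequality for $\lambda\mapsto\log\ee\,e^{\lambda F}$ whose integration (the Herbst argument) gives $\ee\,e^{\lambda(M-m)}\le e^{\lambda^{2}\sigma^{2}/2}$ for every real $\lambda$, and then Markov's inequality with $\lambda=\pm r/\sigma^{2}$ yields both tail bounds exactly as stated. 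Everything else is the routine bookkeeping recorded above.
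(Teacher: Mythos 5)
Your proof is correct. The paper does not actually prove Proposition \ref{borell}; it simply cites it as a known consequence of the Gaussian isoperimetric inequality due to Tsirelson, Ibragimov, and Sudakov \cite{tis76}, so there is no in-paper argument to compare against.

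That said, your route is a standard and clean one. Writing $M=F(\bbg)$ with $F(x)=\max_i\langle a_i,x\rangle$ a $\sigma$-Lipschitz function of a standard Gaussian vector is exactly the reduction one wants, and your observation that the isoperimetric inequality as you invoke it only gives concentration around a median --- and that a naive shift to the mean spoils the constant $1/2\sigma^2$ --- is a genuine and frequently overlooked point. Passing instead through the Gaussian logarithmic Sobolev inequality and the Herbst argument to obtain $\ee\,e^{\lambda(M-m)}\le e^{\lambda^2\sigma^2/2}$, then applying Markov with $\lambda=r/\sigma^2$, gives both one-sided bounds around the mean with the sharp constant, as claimed. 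One minor technical remark worth making explicit: $F$ is convex and Lipschitz rather than smooth, so to run the log-Sobolev argument one either invokes Rademacher's theorem (so $|\nabla F|\le\sigma$ almost everywhere, which suffices for the entropy inequality) or approximates $F$ by smooth Lipschitz functions and passes to the limit. With that caveat noted, the argument is complete.
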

\noindent Although the above result is often called `Borell's inequality', it is clearly not a fair nomenclature. Since it is too cumbersome to call it the `Borell-Tsirelson-Ibragimov-Sudakov inequality', we will simply refer to it as Proposition \ref{borell} in this manuscript. 

Note that although Proposition \ref{borell} is a deep and powerful result, conceptually it does not say a lot more than Proposition \ref{varcrude}, since it implies, just as Proposition \ref{varcrude},  that the fluctuations of the maximum can be at most of order $\sigma^2$. In particular, it is a crude worst case bound that does not use the correlation structure of~$\bbx$. However, this is all that one can obtain from the classical theory of concentration of measure (see e.g.\ Ledoux~\cite{ledoux01}).

Here is where our investigation begins. What happens if $v$ is very small compared to $\sigma^2$? As we will see, this is in fact the rule rather than the exception in interesting examples. The main point of this paper is that the condition $v\ll \sigma^2$ ushers in a whole host of interesting structure on the field~$\bbx$. Indeed, the structure is so interesting and pervasive that the condition  seems to deserve a name of its own. When it happens, we will say that the Gaussian field $\bbx$ exhibits `superconcentration'. The notion can be precisely defined only in terms of a sequence of Gaussian fields rather than a single one. Accordingly, let $(\bbx_n)_{n\ge 1}$ be a sequence of centered Gaussian fields, where $\bbx_n$ is defined on a finite set $S_n$. Let 
\[
\sigma_n^2 := \max_{i\in S_n} \var(X_{n,i}), \ \   M_n := \max_{i\in S_n} X_{n,i}, \ \ m_n := \ee(M_n), \ \ v_n = \var(M_n).
\]
\begin{defn}\label{superconc}
We say that the sequence of Gaussian fields $(\bbx_n)_{n\ge 1}$ `has superconcentrated maximum' or simply `is superconcentrated' if $v_n = o(\sigma_n^2)$ as $n \ra \infty$. 
\end{defn}
\noindent Here, as usual, $a_n = o(b_n)$ means that $\lim_{n\ra\infty} a_n/b_n = 0$. In practice, we will simply say that $\bbx$ is superconcentrated if it is implicitly the $n$th member of a sequence of fields having superconcentrated maximum. We will give many examples of superconcentrated Gaussian fields in the subsequent sections to demonstrate the `rule rather than exception' claim.

Incidentally, physicists often refer to the superconcentration phenomenon as `anomalous fluctuations' (see e.g.\ \cite{hwafisher94}). However, it is not a well-defined notion (in particular, they don't connect it with classical concentration, and `anomalous fluctuations' can also mean larger fluctuations than usual); we feel that our terminology is more evocative and precise.

Let us now describe some of the consequences of superconcentration. A summary of the results is contained in Theorem \ref{main}, but we first need to define some concepts. 

An important property of Gaussian fields that has been studied in various special examples by physicists but has almost no presence in rigorous mathematics, is the property of chaos. We have already had a discussion of this in the context of polymers, so let us now make a general definition. Let $\bbx'$ be an independent copy of $\bbx$. For each $t\in [0,\infty)$, let 
\[
\bbx^t := \est \bbx + \esst \bbx'.
\] 
Note that $\bbx^t$ has the same distribution as $\bbx$, that is, the transformation $\bbx \ra \bbx^t$ is a distribution preserving perturbation of $\bbx$.   We will be mostly interested in small perturbations, i.e., small $t$. As mentioned before, this is a natural way to define perturbations of Gaussian fields because of its intimate relation to Ornstein-Uhlenbeck diffusions. Let $I^t$ be the state at which the maximum is attained in $\bbx^t$, that is,
\[
I^t := \argmax_{i\in S} X^t_i.
\]
Note that $I^t$ is well-defined by assumption \eqref{nondeg}, and that $I^0 = I$. We will say that the field $\bbx$ is `chaotic' if $I^t$ is highly unstable, that is a small change in the value of $t$  causes, with high probability, a drastic change in $I^t$. There are, of course, various notions to be made precise here. First of all, what is meant by a drastic change in $I^t$?  As we will see, in most examples two states $i,j\in S$ are `drastically different' if $R(i,j)$ is very small, typically $R(i,j)\ll \sigma^2$. Thus, we may formulate chaos for $I^t$ to mean that for $t = o(1)$, $\ee( R(I^0,I^t)) \ll \sigma^2$. Secondly, what is a `small change in $t$'? This we will take at face value, i.e.\ small means small, in relation to nothing else.

However, none of this is precise. As before, the only way to make a completely meaningful definition is via sequences. 

Accordingly, let $(\bbx_n)_{n\ge 1}$ be a sequence of Gaussian fields as in Definition~\ref{superconc}. Let $\bbx_n'$ be an independent copy of $\bbx_n$ and define, as above, the perturbed fields
\[
\bbx_n^t := \est \bbx_n + \esst \bbx_n'.
\]
Again, as above, let $I^t_n = \argmax_{i\in S_n} X_{n,i}^t$. Let $R_n$ be the covariance kernel of $\bbx_n$. We are now ready to give a precise definition of the chaos property.
\begin{defn}
We say that the location of the maximum in $\bbx_n$ exhibits `chaos' or simply that $\bbx_n$ exhibits chaos if there is a sequence $t_n \ra 0$ such that $\ee R_n(I_n^0, I_n^{t_n}) = o(\sigma_n^2)$ as $n \ra \infty$. 
\end{defn}
\noindent Note that here we defined chaos in terms of the decay of $\ee R_n(I_n^0, I_n^{t_n})$, instead of $\sup_{t\ge t_n} \ee R_n(I_n^0, I_n^t)$ as we did for the polymer example. It turns out (Theorem \ref{varconv}) that $\ee R_n(I_n^0, I_n^t)$ is always a decreasing function of $t$, and therefore the two definitions are equivalent.

Next let us turn our attention to the so-called `multiple valley picture'. Often, we have the situation that a Gaussian field $\bbx$ has many `drastically different' sites at which the global maximum is nearly achieved. It is called `multiple valleys' instead of `multiple peaks' because the physicists like to put a minus sign. We will, however, call it the multiple peaks phenomenon to avoid any confusion. As before, we attempt to give a precise definition via sequences of fields. All notation is the same as before.
\begin{defn}
We say that $\bbx_n$ has multiple peaks (MP) if there exist $l_n \ra \infty$, $\epsilon_n = o(\sigma_n^2)$, $\delta_n = o(m_n)$ and $\gamma_n \ra 0$ such that with probability at least $1- \gamma_n$, there is a set $A \subseteq S_n$ satisfying
\begin{enumerate}
\item[(a)] $|A|= l_n$,
\item[(b)] $R_n(i,j) < \epsilon_n$ for each $i,j\in A$, $i\ne j$, and 
\item[(c)] $X_i \ge M_n - \delta_n$ for each $i\in A$. 
\end{enumerate}
\end{defn}
\noindent Note that the condition $\delta_n = o(m_n)$ is natural, because $X_i$ is nearly maximum if $X_i = M_n - o(M_n)$. However, this is not the form that is conjectured in the physics models. Indeed, since the fluctuations of $M_n$ are of order $\sqrt{v_n}$, the physicists seem to think that multiple peaks, in the above sense, should occur whenever $\delta_n\gg \sqrt{v_n}$, or at least when $\delta_n = o(\sigma_n)$. This leads us to the definition of a stronger notion of multiple peaks.
\begin{defn}
We say that $\bbx_n$ has strong multiple peaks if the multiple peaks condition is satisfied with $\delta_n = o(\sigma_n)$ instead of $\delta_n= o(m_n)$. 
\end{defn}
\noindent The first main result of this paper, stated below, shows that the properties of superconcentration, chaos, multiple peaks, and strong multiple peaks are all intimately related to each other. This may not be surprising from a physicist's point of view, but this is the first time that these widely observed phenomena have been formulated and connected by rigorous mathematics.
\begin{thm}\label{main}
For any sequence of Gaussian fields $(\bbx_n)_{n\ge 1}$ satisfying the non-degeneracy condition \eqref{nondeg} we have
\[
\text{Strong MP} 
\begin{array}{c}
\Longrightarrow\\
\not \Longleftarrow 
\end{array}
\text{Superconcentration} \iff \text{Chaos.} 
\]
Moreover, under the `positivity assumption' that $R_n(i,j)\ge 0$ for each $n$ and $i,j \in S_n$, we have the more complete picture:
\[
\text{Strong MP} 
\begin{array}{c}
\Longrightarrow\\
\not \Longleftarrow 
\end{array}
\text{Superconcentration} \iff \text{Chaos} 
\begin{array}{c}
\Longrightarrow\\
\not \Longleftarrow 
\end{array}
\text{ MP.}
\]
\end{thm}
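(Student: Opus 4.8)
The plan is to run everything off the Ornstein--Uhlenbeck variance identity recorded in Theorem~\ref{varconv}. Write $\bbx_n = A_n Z_n$ with $Z_n$ a standard Gaussian vector and $A_n A_n^T = R_n$, so that $M_n = f_n(Z_n)$ for the Lipschitz function $f_n(z) := \max_{i\in S_n}(A_n z)_i$, which has $\nabla f_n(z) = A_n^T e_{I_n(z)}$ at almost every $z$. Gaussian interpolation along the flow then gives
\[
v_n \;=\; \int_0^\infty \est\; \ee R_n(I_n^0, I_n^t)\, dt ,
\]
and, with $(P_s)_{s\ge 0}$ the Ornstein--Uhlenbeck semigroup acting coordinatewise, $\ee R_n(I_n^0, I_n^t) = \ee\|P_{t/2}\nabla f_n\|^2$; hence $t\mapsto \ee R_n(I_n^0, I_n^t)$ is nonincreasing and $0 \le \ee R_n(I_n^0, I_n^t) \le \sigma_n^2$ for every $t\ge 0$. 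Together with Proposition~\ref{borell}, these three facts carry the whole argument.

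From the identity, the equivalence \emph{Superconcentration $\iff$ Chaos} is immediate. If $\bbx_n$ is chaotic, pick $t_n\to 0$ with $\ee R_n(I_n^0, I_n^{t_n}) = o(\sigma_n^2)$; splitting the integral at $t_n$ and using monotonicity on $[t_n,\infty)$ gives $v_n \le (1-e^{-t_n})\sigma_n^2 + \ee R_n(I_n^0, I_n^{t_n}) = o(\sigma_n^2)$. Conversely, if $v_n = o(\sigma_n^2)$, then by nonnegativity and monotonicity $v_n \ge \int_0^t e^{-s}\,\ee R_n(I_n^0, I_n^s)\, ds \ge (1-e^{-t})\,\ee R_n(I_n^0, I_n^t)$ for every $t>0$, so $\ee R_n(I_n^0, I_n^t)\le v_n/(1-e^{-t})$; choosing $t_n\to 0$ slowly enough that $v_n/\sigma_n^2 = o(1-e^{-t_n})$ (for instance $t_n = (v_n/\sigma_n^2)^{1/2}$ on the subsequence where this is positive) makes $\ee R_n(I_n^0, I_n^{t_n}) = o(\sigma_n^2)$, which is chaos.

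For \emph{Strong MP $\Longrightarrow$ Superconcentration}, since $0\le \ee R_n(I_n^0, I_n^t)/\sigma_n^2\le 1$, dominated convergence applied to the identity reduces the claim to: for each \emph{fixed} $t>0$, $\ee R_n(I_n^0, I_n^t) = o(\sigma_n^2)$. I would extract this from strong MP by working on the MP event (probability $\ge 1-\gamma_n\to 1$), where a set $A$ of $l_n$ near-maximizers is available; since $X_i^0 \ge M_n-\delta_n$ for $i\in A$,
\[
M_n^t \;\ge\; \max_{i\in A} X_i^t \;\ge\; \est(M_n-\delta_n) + \esst\max_{i\in A} X_i' ,
\]
and the coordinates $(X_i')_{i\in A}$ of the independent copy are Gaussian with pairwise covariances below $\epsilon_n = o(\sigma_n^2)$, so (using that a near-maximizer of a Gaussian field typically has variance of order $\sigma_n^2$ --- a point that needs a short separate argument) a Sudakov-minoration or second-moment estimate forces $\max_{i\in A}X_i' \gtrsim \sigma_n(\log l_n)^{1/2}$ with high probability. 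Weighing this forced gain in $M_n^t$ against the concentration of $M_n^t$ about $m_n$ prevents $I_n^t$ from remaining appreciably correlated with $I_n^0$; the cleanest implementation routes this through the hypercontractive bound $\ee R_n(I_n^0, I_n^t)\le \sum_k \|\partial_k f_n\|_{1+e^{-t}}^2$ together with the observation that strong MP makes the coordinate influences $\|\partial_k f_n\|_2^2$ uniformly small. Turning the ``many spread-out near-maximizers'' picture into this fixed-$t$ decorrelation estimate is where essentially all the work sits, and is the step I expect to be the main obstacle.

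Under the positivity assumption $R_n\ge 0$ I would finally prove \emph{Chaos $\Longrightarrow$ MP} by construction. Choose $t_n\to 0$ witnessing chaos, a slowly growing $l_n\to\infty$, and times $0 = s_0 < s_1 < \cdots < s_{l_n}$ with $s_{l_n}\to 0$ such that for all $j<k$ the effective perturbation time $t_{jk}$ linking $\bbx_n^{s_j}$ to $\bbx_n^{s_k}$, given by $e^{-t_{jk}} = e^{-(s_j+s_k)} + \sqrt{(1-e^{-2s_j})(1-e^{-2s_k})}$, satisfies $t_{jk}\ge t_n$ (the choice $s_j \asymp j^2 t_n$ works and, since $t_n\to 0$, still allows $l_n\to\infty$). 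Set $A := \{I_n^{s_0},\ldots,I_n^{s_{l_n}}\}$. Because $(\bbx_n^{s_j}, \bbx_n^{s_k})$ has the same law as $(\bbx_n, \bbx_n^{t_{jk}})$, monotonicity gives $\ee R_n(I_n^{s_j}, I_n^{s_k}) = \ee R_n(I_n^0, I_n^{t_{jk}}) \le \ee R_n(I_n^0, I_n^{t_n}) = o(\sigma_n^2)$; since $R_n\ge 0$ --- the one place positivity is used --- Markov's inequality and a union bound over the $O(l_n^2)$ pairs give that, with probability tending to $1$, all pairwise covariances inside $A$ fall below a suitable $\epsilon_n = o(\sigma_n^2)$, and in particular the $I_n^{s_j}$ are distinct. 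For near-maximality, $M_n^{s_j} = X_{I_n^{s_j}}^{s_j} \ge X_{I_n^0}^{s_j}$ unpacks to $X_{I_n^{s_j}}^0 \ge M_n - C\sqrt{s_j}\,\max_{i}|X_i'|$, and since $\max_i|X_i'|$ is of order $m_n$ with high probability (Proposition~\ref{borell}) and $s_j\le s_{l_n}\to 0$, this is $\ge M_n - o(m_n)$ with probability tending to $1$; taking $\delta_n = o(m_n)$ accordingly and $\gamma_n$ the total failure probability yields MP. The two non-implications (Strong MP $\not\Longleftarrow$ Superconcentration, and, in the positive picture, MP $\not\Longrightarrow$ Chaos) I would settle by exhibiting explicit sequences of Gaussian fields on sets of polynomial size, for which $m_n$, $\sigma_n$, $v_n$, and the near-maximizer structure are all computable by hand.
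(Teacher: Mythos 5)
Your equivalence proof for \emph{Superconcentration $\iff$ Chaos} is correct and is the paper's argument (Lemma~\ref{varmax} plus Theorem~\ref{varconv}); your \emph{Chaos $\Longrightarrow$ MP} construction under positivity is a workable variant of Theorem~\ref{p2p3}, replacing the paper's Markov chain of independent perturbations $\bbz^{(k)}=e^{-t}\bbz^{(k-1)}+\esst\bbx^{(k)}$ (which gives the cleaner effective time $|j-k|t$) by a single driving noise with different time stamps. The genuine gap --- which you flag yourself --- is \emph{Strong MP $\Longrightarrow$ Superconcentration}. Your plan reduces it to the fixed-$t$ decorrelation estimate $\ee R_n(I_n^0,I_n^t)=o(\sigma_n^2)$ and then tries to extract that from the ``forced gain'' $M_n^t\ge e^{-t}(M_n-\delta_n)+\esst\max_{i\in A}X_i'$. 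But a lower bound on $M_n^t$ says nothing about where $I_n^t$ sits: the maximum over $A$ being large is entirely consistent with the global argmax $I_n^t$ lying near $I_n^0$, so no bound on $R_n(I_n^0,I_n^t)$ follows. The alternative you offer --- that strong MP makes the influences $\|\partial_k f_n\|_2^2$ uniformly small so the hypercontractive bound applies --- is not a valid implication: strong MP constrains the near-maximizer set in $S_n$, not the derivative of the argmax functional in the underlying i.i.d.\ coordinates, and no such influence bound holds or is used anywhere in the paper.

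The paper's actual route for this implication (following a sketch of Talagrand) sidesteps the Ornstein--Uhlenbeck machinery entirely. It introduces the averaged surrogate $\om:=\max_{(i_1,\ldots,i_l)\in U} Z_{(i_1,\ldots,i_l)}$, where $U$ is the set of $l$-tuples with all pairwise covariances below $\epsilon$ and $Z_{(i_1,\ldots,i_l)}=\frac{1}{l}\sum_p X_{i_p}$. Each such average has variance at most $l^{-1}\sigma^2+\epsilon$, so Proposition~\ref{varcrude} gives $\var(\om)\le l^{-1}\sigma^2+\epsilon$; on the strong MP event $|M-\om|\le\delta$ (since $\om\le M$ always and $\om\ge\frac{1}{l}\sum_{i\in A}X_i\ge M-\delta$); and a direct variance comparison using Cauchy--Schwarz and Proposition~\ref{borell} on the complementary event yields $v\le 3\delta^2+3l^{-1}\sigma^2+3\epsilon+16\sigma^2\sqrt{\gamma}=o(\sigma_n^2)$. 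This low-variance surrogate is the detour to take if you want to rescue your argument --- not a decorrelation estimate. Finally, the two $\not\Longleftarrow$'s are a real part of the statement: the paper builds two nontrivial counterexample fields (independent bipartite blocks sharing a common Gaussian shift for MP $\not\Rightarrow$ Chaos; the $X_f=n^{-1/2}\sum_i g_{if(i)}$ model for Chaos $\not\Rightarrow$ Strong MP), and ``exhibit explicit sequences computable by hand'' is a plan rather than a construction.
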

\noindent The counterexample that shows chaos $\not \Rightarrow$ strong MP is particularly surprising and goes against common intuition. It shows that, contrary to what one may think, chaos {\it is not necessarily caused} by the existence of multiple peaks.

Theorem \ref{main} is stated as a limiting result, but we do have precise quantitative bounds for all parts of the theorem. These will be presented in Section~\ref{proof:main}, where we give the proof of Theorem~\ref{main}. One result from Section~\ref{proof:main} is worth mentioning here, since it provides the foundation for all subsequent work by connecting the value of the maximum with the location of the maximum in a Gaussian field. It is Lemma \ref{varmax}, which says that if $\tau$ is an Exponential random variable with mean $1$, independent of all else, then we have the formula
\[
v = \ee(R(I^0,I^\tau)).
\]
We have already stated the version of this formula for polymers. The proof of this identity would be easy for any expert on the Ornstein-Uhlenbeck semigroup --- indeed, it is implicit in the classical proofs of Propositions~\ref{varcrude} and \ref{borell} --- but the author has not seen it explicitly written down anywhere in the published literature. Interestingly, it was brought to our notice that the identity does make an appearance (in a slightly different form) in a recent manuscript of Nourdin and Viens~\cite{nourdinviens08} that was prepared at the same time as the first version of this paper was being written.

In spite of its cuteness the identity is not very useful on its own, since we are interested in $\ee(R(I^0, I^t))$ for fixed~$t$. This question is handled in Theorem~\ref{varconv}, where we apply a Tauberian argument to the above representation of $v$ to show that for each~$t$, 
\begin{align*}
&0\le \ee(R(I^0,I^t)) \le \frac{v}{1-e^{-t}},
\ \text{ and } \\
&v \le \sigma^2(1-e^{-t}) + \ee(R(I^0,I^t)) e^{-t}.
\end{align*}
It turns out that these upper and lower bounds suffice to show the equivalence of superconcentration and~chaos (we show this in Section \ref{proof:main}). 

Before presenting further results, let us discuss some of the literature. As we mentioned before, the phenomena of superconcentration, chaos, and multiple peaks have not been systematically studied in the mathematics literature, so there are essentially very few references. The closest thing to chaos in the domain of rigorous mathematics is the notion of noise-sensitivity, although that refers mainly to correlations between functions. The literature on noise-sensitivity in computer science is sizable; it mostly involves sensitivity of scalar functions of Boolean random variables to random noise, which is not so relevant to us. In the probability world, a very notable paper on the subject is due to Benjamini, Kalai, and Schramm~\cite{bks01}. A subsequent paper~\cite{bks03} by the same authors is more relevant for what we do in this article. 

One truly significant contribution to what we call superconcentration is due to Talagrand (\cite{talagrand94}, Theorem 1.5), which has been the source of many applications (including \cite{bks03}). Talagrand's result provides a way to improve variance bounds like Proposition \ref{varcrude} under certain situations by a `factor of $\log n$'. Talagrand's breakthrough idea was to use the tool of hypercontractivity (discovered by Nelson \cite{nelson73}) to improve variance bounds. We will use this method, in conjunction with ideas from Benjamini-Kalai-Schramm~\cite{bks03} and our Theorem \ref{main}, to prove the existence of chaos in directed polymers in Section \ref{polymer}. 

Another contribution of Talagrand in our context is the proof of strong MP $\Rightarrow$ superconcentration, which follows essentially from a sketch at the end of Section 8.3 in his landmark paper \cite{talagrand95}. Unfortunately, the author has not yet been able to find a use for this remarkable implication, since proving strong MP seems to be always more difficult than proving any of the other phenomena.

In the physics literature, there is a long, folklorish history of studying the phenomena of superconcentration (via `fluctuation exponents'), chaos, and multiple valleys. For instance, the implication that superconcentration $\Rightarrow$ chaos is the central theme of  Fisher and Huse \cite{fisherhuse91}, who investigated it in the context of directed polymers in a random environment. Examples of other highly cited works in this area are those of McKay, Berger, and Kirkpatrick~\cite{mckayetal82}, Huse, Henley, and Fisher \cite{husehenleyfisher85},  Bray and Moore \cite{braymoore87}, Zhang~\cite{zhang87} and M\'ezard~\cite{mezard90}.

Let us now return to the discussion of our results. As we mentioned above, the only rigorous tool available at present that can establish superconcentration is Talagrand's method of using hypercontractivity to improve variance bounds. Although this works in many situations (including some of our examples in this paper), it gives only a `$\log n$ correction', and usually does not suffice to break the barrier of `improvements in powers of $n$'. 

One of the main goals of this work is to break this wall by finding an alternative technique. We have only had partial success in this direction, but what we have may lead to further progress. Under a certain condition that we call `extremality', we are able to get improvements in powers of $n$ in highly nontrivial models like certain cases of the generalized Sherrington-Kirkpatrick model of spin glasses. The success is `partial' because, for instance, we are not able to cover the original $2$-spin SK model.

The notion of extremality of a Gaussian field like $\bbx$ is defined as follows. It can be shown (see Lemma \ref{max}) that irrespective of the correlation structure,
we have $m \le \sqrt{2\sigma^2\log |S|}$,
with near equality if the coordinates are i.i.d.\ $N(0,\sigma^2)$. We say that the field $\bbx$ is `extremal' if $m \simeq \sqrt{2\sigma^2\log |S|}$. Of course this makes sense only if we consider sequences.
\begin{defn}\label{extremaldef}
We say that $(\bbx_n)_{n\ge 1}$ is extremal if 
\[
\lim_{n \ra \infty} \frac{m_n}{\sqrt{2\sigma_n^2\log |S_n|}} = 1.
\]
\end{defn}
\noindent Note that although extremality may seem to indicate that the coordinates of $\bbx_n$ are approximately independent for large $n$, that is not true. Extremality can hold even in models with high degrees of dependence between sites, like the Gaussian free field (proved by Bolthausen, Deuschel, and Giacomin~\cite{bdg01}), branching random walks (proved by Biggins \cite{biggins77}), and some nontrivial spin glasses (treated in Section \ref{skmodel}). The second main result of this paper, proved in Section \ref{proof:main2}, is the following. 
\begin{thm}\label{main2}
For any sequence of Gaussian fields $(\bbx_n)_{n\ge 1}$ with state spaces $S_n$ growing in size to infinity, we have
\[
Extremality \Longrightarrow Chaos.
\]
\end{thm}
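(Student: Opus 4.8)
The plan is to route through Theorem~\ref{main}: since chaos and superconcentration are equivalent, it is enough to prove that extremality forces $v_n=o(\sigma_n^2)$, and this is what I would attack. To see that this suffices, use the upper bound $\ee R_n(I_n^0,I_n^t)\le v_n/(1-e^{-t})$ of Theorem~\ref{varconv}: once $v_n=o(\sigma_n^2)$, take $t_n:=(v_n/\sigma_n^2)^{1/2}\ra0$ (any $t_n\ra0$ if $v_n=0$), and then
\[
\ee R_n(I_n^0,I_n^{t_n})\le\frac{v_n}{1-e^{-t_n}}=(1+o(1))\,\frac{v_n}{t_n}=(1+o(1))\,\sigma_n\sqrt{v_n}=o(\sigma_n^2),
\]
which is exactly chaos. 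So the whole problem is: \emph{extremality} $\Longrightarrow$ $v_n=o(\sigma_n^2)$.

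Next I would extract the consequence of extremality at the critical temperature. Normalize $\sigma_n=1$, put $N_n=|S_n|$, $\beta_n=\sqrt{2\log N_n}$, and $\psi_n(\beta):=\log\ee e^{\beta M_n}$. The union bound behind Lemma~\ref{max}, $\ee e^{\beta M_n}\le\sum_i\ee e^{\beta X_{n,i}}\le N_ne^{\beta^2/2}$, says $\psi_n(\beta)\le\log N_n+\beta^2/2$ for all $\beta\ge0$, while Jensen gives $\psi_n(\beta)\ge\beta m_n$. Extremality is the statement that $\beta_n m_n=2\log N_n\,(1-o(1))$, and feeding this into the union bound pins down the \emph{centered} exponential moment at $\beta_n$:
\[
\ee e^{\beta_n(M_n-m_n)}=\ee e^{\beta_n M_n}\,e^{-\beta_n m_n}\le N_n\,e^{\beta_n^2/2-\beta_n m_n}=N_n^{\,o(1)},
\]
the smallest value it can possibly take; equivalently $\psi_n(\beta_n)=2\log N_n\,(1-o(1))$, essentially the maximum the union bound permits. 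Also $\psi_n$ is smooth and convex with $\psi_n(0)=0$, $\psi_n'(0)=m_n$, $\psi_n''(0)=v_n$, and one should keep in hand the companion inequality $v_n\le 1-e^{-t}+\ee R_n(I_n^0,I_n^t)e^{-t}$ of Theorem~\ref{varconv}.

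The heart of the argument is then to convert ``$\psi_n$ is nearly maximal at $\beta_n$'' into ``$\psi_n''(0)=v_n=o(1)$.'' Convexity of $\psi_n$ alone cannot do this (a convex function can hide all its curvature near $0$), and — this is the real subtlety — neither can the bound $\ee e^{\beta_n(M_n-m_n)}=N_n^{o(1)}$ together with the sub-Gaussian tails of $M_n$ from Proposition~\ref{borell}: that only yields $v_n=o(\log N_n)$, because the extremality deficit $1-m_n/\sqrt{2\log N_n}$ may tend to $0$ arbitrarily slowly. One must use that $M_n$ is the maximum of a \emph{jointly Gaussian} family. The route I would take is to work with the Gibbs measure $\mu_\beta(i)\propto e^{\beta X_{n,i}}$, whose free energy differs from $M_n$ by at most $(\log N_n)/\beta$ and whose fluctuations are governed by the expected overlap $\ee\sum_{i,j}\mu_\beta(i)\mu_\beta(j)R_n(i,j)$ — the analogue, for finite $\beta$, of the identity $v_n=\ee R_n(I_n^0,I_n^\tau)$ of Lemma~\ref{varmax}. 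The estimate of the previous paragraph forces $\ee\sum_i\mu_{\beta_n}(i)^2$ to be small (else the union bound would be far from tight), and combining this with the geometry of the canonical metric $d(i,j)=(\ee(X_{n,i}-X_{n,j})^2)^{1/2}$ — all distances are $\le\sqrt2$, and near-sharpness of the first-moment bound forces typical pairs of near-maximizers to sit essentially at the extremal distance $\sqrt2$, i.e.\ to have $R_n(i,j)\approx0$ — should give $\ee R_n(I_n^0,I_n^\tau)=o(1)$, i.e.\ $v_n=o(1)$, which is the missing input of the first step.

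I expect the obstacle to be exactly this last step: making ``the first-moment (union) bound for the Gaussian maximum is asymptotically sharp'' quantitatively imply ``the field decorrelates on the set of near-maximizers,'' and doing so uniformly in the unspecified rate of convergence built into the definition of extremality. Any argument that stays at the level of exponential moments of $M_n$ loses a factor of $\log|S_n|$; the fix has to be run at the level of the random Gibbs measure, or equivalently the canonical metric, exploiting that a maximum of Gaussians that attains its union-bound ceiling must be genuinely spread out at the top.
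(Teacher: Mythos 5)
Your reduction is correct and matches the paper's: by Theorem~\ref{varconv} it is enough to show that extremality forces $v_n = o(\sigma_n^2)$ (your Tauberian step $t_n = \sqrt{v_n/\sigma_n^2}$ is exactly the one the paper records after Theorem~\ref{varconv}). You also correctly identify the core difficulty --- near-sharpness of the union bound must be converted into decorrelation on the set of near-maximizers, uniformly over the unspecified rate hidden in the definition of extremality --- and you are right that working at the level of exponential moments of $M_n$ alone is hopeless (it yields a vacuous bound once $\sigma_n^2=1$). However, what you then offer is a research plan, not a proof, and you say so: the Gibbs-measure route you sketch has an unfilled hole. Concretely, $\ee\sum_i \mu_{\beta_n}(i)^2$ being small is only a statement that the Gibbs measure is diffuse; it does not by itself bound the overlap $\ee\sum_{i,j}\mu_{\beta_n}(i)\mu_{\beta_n}(j)R_n(i,j)$, since the measure could spread over a large cluster of states that are nearly perfectly correlated with one another. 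That inference from ``diffuse'' to ``spread out in the canonical metric'' is exactly the step you flag as the obstacle, and it is the whole theorem.

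The paper's proof of the quantitative version (Theorem~\ref{extreme}) fills this gap by an argument you did not find, and it is worth spelling out what it buys. Normalize $\sigma^2=1$ and write $\bbx^t = e^{-t}\bbx + \esst\bbx'$. First, Lemma~\ref{prediction} pins down the \emph{unperturbed value at the perturbed argmax}: $X_{I^t}\approx e^{-t}m$ with high probability, via the orthogonal decomposition $\bbx = e^{-t}\bbx^t + \esst\bbw$. Hence $I^t$ lies, with high probability, in the thin slab $D=\{i: X_i\approx e^{-t}m\}$, whose size is controlled by a first-moment bound $|D|\lesssim n^{1-e^{-2t}\alpha^2}$. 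Second --- and this is the step your plan is missing --- one conditions on $\bbx$ and observes that for any index $i$ in $B=\{i: R(I^0,i)\ge r\}$, the centered Gaussian $X_i' - rX_{I^0}'$ has conditional variance $\le 1-r^2$; combining with Lemma~\ref{max} on the (random) set $B\cap D$ gives $\ee(\max_{i\in B\cap D}X_i'\mid\bbx)\lesssim \sqrt{(1-r^2)\,2\log|D|}$. Feeding the size of $D$ into this and comparing against $m\approx\alpha\sqrt{2\log n}$ shows that if $r>\sqrt{1-\alpha^2}$ (up to the error terms that produce the $\sqrt{1-\alpha}+(\log\log|S|/\log|S|)^{1/4}$ in the statement), the maximum of $X^t$ over $B\cap D$ falls strictly short of $m$, so $I^t\notin B$ with high probability, which is precisely $\pp(R(I^0,I^t)\ge r)$ small. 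This proves the chaos bound $\ee R(I^0,I^t)\le C\sigma^2\beta/\esst$ directly for every $t$, and then $v\le C\sigma^2\beta$ follows from the identity $v=\ee R(I^0,I^\tau)$ of Lemma~\ref{varmax} --- note the direction: the paper derives superconcentration from the chaos bound, not the other way around as you propose. The mechanism is entirely geometric (sets $B$ and $D$, conditional Gaussian comparison given the correlation structure) and never touches the Gibbs measure. Your intuition about what extremality \emph{should} force is correct, but you would need this conditional-variance argument, or something equivalent, to make it a proof.
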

\noindent Let us now present one example of a concrete variance bound that can be used to establish superconcentration. The following theorem is proved in Section \ref{proof:main2}, where it is deduced from a quantitative version of Theorem~\ref{main2}. We shall use this result to establish the presence of chaos in certain models of spin glasses in Section \ref{skmodel}. Moreover, the variance bound given by the following theorem, wherever it applies, gives `corrections in powers of $n$' instead of log corrections as given by hypercontractivity. 
\begin{thm}\label{gencorr}
Consider the field $\bbx = (X_i)_{i\in S}$. Suppose $R(i,i) = \sigma^2$ for all $i$. For each $i,j\in S$, let $r_{ij} := R(i,j)/\sigma^2$. Let
\[
\beta :=  \biggl(\frac{\log \log |S| +\log \sum_{i,j\in S} |S|^{-2/(1+r_{ij})}}{\log |S|}\biggr)^{1/4}. 
\]
Then $v\le C\sigma^2\beta$ and for any $t\ge 0$, $\ee(R(I^0,I^t)) \le \frac{C\sigma^2\beta}{\esst}$, where $C$ is a universal constant.
\end{thm}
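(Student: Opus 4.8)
The plan has three ingredients: reduce the variance bound to the estimate on $\ee(R(I^0,I^t))$; show that the hypothesis forces a quantitative form of extremality; and then invoke the quantitative version of Theorem~\ref{main2} established in Section~\ref{proof:main2}.

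For the reduction, Lemma~\ref{varmax} gives $v=\ee(R(I^0,I^\tau))=\int_0^\infty e^{-t}\,\ee(R(I^0,I^t))\,dt$, so once we know $\ee(R(I^0,I^t))\le C\sigma^2\beta/\esst$ for every $t$, the substitution $u=e^{-t}$ yields $v\le C\sigma^2\beta\int_0^1(1-u^2)^{-1/2}\,du=\tfrac{\pi}{2}C\sigma^2\beta$; hence it suffices to prove the bound for $\ee(R(I^0,I^t))$. Moreover we may assume $\beta$ is smaller than a fixed absolute constant, since otherwise $C\sigma^2\beta\ge\sigma^2\ge v$ by Proposition~\ref{varcrude} (and likewise $\ge\ee(R(I^0,I^t))$, because $|R(i,j)|\le\sigma^2$), and the theorem is trivial; and we may assume $|S|$ is large.

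For the quantitative extremality, write $L=\log|S|$; I would show that the relative gap $\rho:=1-m/\sqrt{2\sigma^2L}$ satisfies $\rho\le C\beta^2$. Fix $s=\sqrt{2\sigma^2L}\,(1-\beta^2)$ and let $N=\#\{i\in S:X_i\ge s\}$. Standard Gaussian tail bounds give $\ee N=\sum_i\pp(X_i\ge s)\gtrsim|S|^{2\beta^2-\beta^4}$ (up to an irrelevant factor $L^{-1/2}$), and $\ee(N^2)=\sum_{i,j}\pp(X_i\ge s,\,X_j\ge s)\le\sum_{i,j}e^{-s^2/\sigma^2(1+r_{ij})}$, by applying the Gaussian tail to $X_i+X_j$, which has variance $2\sigma^2(1+r_{ij})$. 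Since $s^2/\sigma^2=2L(1-\beta^2)^2$ and $1/(1+r_{ij})\le1$ whenever $r_{ij}\ge0$ (the negative-$r_{ij}$ terms being negligible), each summand is at most $|S|^{4\beta^2}\,|S|^{-2/(1+r_{ij})}$, so $\ee(N^2)\le|S|^{4\beta^2}\sum_{i,j}|S|^{-2/(1+r_{ij})}=|S|^{4\beta^2+\beta^4}/L$ by the very definition of $\beta$. The Paley--Zygmund inequality then gives $\pp(M\ge s)=\pp(N\ge1)\ge(\ee N)^2/\ee(N^2)\gtrsim|S|^{-3\beta^4}$, and plugging this into Proposition~\ref{borell} --- which forces $m\ge s-\sigma\sqrt{2\log(1/\pp(M\ge s))}$ --- yields $m\ge s-\sigma\sqrt{6\beta^4L+O(1)}\ge\sqrt{2\sigma^2L}\,(1-C\beta^2)$, using that $\beta^2\ge\sqrt{(\log L)/L}$ dominates the $L^{-1/2}$ error.

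Finally, I would feed this into the quantitative version of Theorem~\ref{main2}, which bounds $\ee(R(I^0,I^t))$ in terms of the relative extremality gap --- a bound of the shape $\ee(R(I^0,I^t))\le C\sigma^2\sqrt{\rho}/\esst$ --- so that $\rho\le C\beta^2$ gives $\ee(R(I^0,I^t))\le C'\sigma^2\beta/\esst$, and the reduction of the second paragraph gives $v\le C''\sigma^2\beta$. The main obstacle, and the source of the fourth root in the definition of $\beta$, is precisely the calibration linking these steps: the level $s$ must be chosen so that simultaneously $\ee N$ is large, $\ee(N^2)$ is governed by the $\beta$-sum, and the ensuing lower bound on $\pp(M\ge s)$ is strong enough that the $\sqrt{\log(1/\pp(M\ge s))}$ loss in Proposition~\ref{borell} costs only $O(\beta^2\sqrt L)$ --- this produces one square root, and the dependence of $v$ on $\rho$ in the quantitative Theorem~\ref{main2} produces the second. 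The delicate point is to verify that $\beta$ small genuinely yields $\rho$ small with the right power, not merely $\rho=o(1)$.
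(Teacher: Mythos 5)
Your proposal follows essentially the same route as the paper's proof: a second-moment / Paley--Zygmund argument on the count of near-maximal sites to lower bound $\pp(M\ge s)$, followed by Proposition~\ref{borell} to get quantitative extremality $1-\alpha\le C\beta^2$, followed by an application of Theorem~\ref{extreme} (the quantitative version of Theorem~\ref{main2}), with the reduction from the $\ee(R(I^0,I^t))$ bound to the $v$ bound via Lemma~\ref{varmax} at the end. This is exactly the structure in Section~\ref{proof:main2}.

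The one genuine difference is the threshold: the paper takes $s=\sqrt{2\log|S|}$, which makes the second-moment bound $\ee(N^2)\le\sum_{i,j}|S|^{-2/(1+r_{ij})}$ hold termwise for \emph{all} $r_{ij}\in[-1,1]$ with no slack, whereas your choice $s=\sqrt{2\sigma^2 L}(1-\beta^2)$ introduces a factor $|S|^{4\beta^2}$ and the resulting termwise bound $e^{-s^2/\sigma^2(1+r_{ij})}\le|S|^{4\beta^2}|S|^{-2/(1+r_{ij})}$ in fact fails for $r_{ij}<-\beta^2/2$, so the claim that ``the negative-$r_{ij}$ terms are negligible'' must be established separately (it is true --- those terms contribute at most $|S|^{3\beta^2}\ll|S|^{4\beta^2}/\log|S|$ because $\beta^2\ge\log L/L$ --- but it is an unfilled step). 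The paper's simpler threshold avoids this complication entirely, and your $(1-\beta^2)$ factor buys nothing: both choices deliver $1-\alpha\lesssim\beta^2$ with the same quality. So the proposal is correct modulo that one elided estimate, but the paper's calibration is tidier.
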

\noindent The bound can be interpreted easily by considering i.i.d.\ standard Gaussians. If $r_{ij}=0$ for all  $i\ne j$ and $r_{ii}=1$ for all $i$, we have 
$\sum_{i,j\in S} |S|^{-2/(1+r_{ij})} \le 2$, 
which proves superconcentration (although not the correct order bound on the variance in this case). In general, Theorem \ref{gencorr} gives a way of proving superconcentration and chaos when `most correlations are small', but may not work in many situations.
 
The rest of the paper is organized as follows. In Section \ref{basicfacts}, we state some well-known  results about Gaussian random variables that will be useful for us later on. In Section \ref{proof:main}, we prove Theorem \ref{main}, together with a number of results that give quantitative versions of the various implications in Theorem~\ref{main}. In Section \ref{hypercon}, we give a brief introduction to the concept of hypercontractivity for the Ornstein-Uhlenbeck semigroup and how to use it for proving superconcentration. In Section \ref{proof:main2} we prove Theorems~\ref{main2} and \ref{gencorr}. Finally, in Sections~\ref{eigen} through \ref{euclidean}, we work out a number of examples. This includes applications to eigenvectors of random matrices (Section \ref{eigen}), the Kauffman-Levin $NK$ fitness model of evolutionary biology (Section \ref{nk}), directed polymers in random environment (Section \ref{polymer}), the generalized Sherrington-Kirkpatrick model of spin glasses (Section \ref{skmodel}), the discrete Gaussian free field (Section \ref{dgff}), and Gaussian fields on Euclidean spaces (Section \ref{euclidean}). 

Let us now mention some conventions that we will follow in this paper. First of all, we must declare  that the constant $C$ is going to stand for any generic universal constant, whose value may change from line to line. This is an invaluable help in lightening notation. We will generally denote scalar variables and elements of $\rr^2$ by ordinary italic font variable names like $x,y,u,$ etc. We will use boldface in dimensions higher than $2$. Finally, let us reiterate that the symbols $\bbx,\bbx^t, M, m, v,\sigma^2, I^t$, and $R(i,j)$ will be used without reference to denote what they denote in this section.

\section{Some basic facts about Gaussian random variables}\label{basicfacts}
In this section, as elsewhere, we continue to use the notation defined in Section \ref{intro}. In particular, $\bbx$, $\bbx^t$, $m$, $v$, $\sigma^2$, $R(i,j)$, and $I^t$ stand for the same objects as before. We state some very well-known facts about Gaussian random variables and vectors that will be of  repeated use for us in the rest of the manuscript. Two such facts, namely Proposition \ref{varcrude} and Proposition \ref{borell}, have already been stated in Section \ref{intro}. 
\vskip.1in
\noindent\textbf{Size  of the maximum.} Just like the variance, the expected value of the maximum of a Gaussian field also has a general, worst case bound.  The bound is much easier to establish than the variance bound, so we give the proof right here. 
\begin{lmm}\label{max}
We have the general bound
\[
m\le \sqrt{2\sigma^2\log |S|}.
\]
Moreover if $|S|\ge 2$ then for any $p\ge 1$, 
\[
\ee|M|^p\le\ee\max_i |X_i|^p\le  C(p)\sigma^p (\log |S|)^{p/2},
\]
where $C(p)$ is a constant that depends only on $p$. 
\end{lmm}
\noindent{\it Remark.} We do not need that $(X_i)_{i\in S}$ are Gaussian for the bound on the expectation; the proof goes through for any collection random variables with Gaussian tails, irrespective of the dependence among them. This observation will be used a few times in the sequel.
\begin{proof}
Without loss of generality, assume that $\sigma^2 = 1$. Then for any $\beta > 0$,
\begin{align*}
m &= \frac{1}{\beta}\ee(\log e^{\beta M}) \\
&\le \frac{1}{\beta}\ee\biggl(\log \sum_{i\in S} e^{\beta X_i}\biggr)\\
&\le \frac{1}{\beta}\log \sum_{i\in S} \ee(e^{\beta X_i}) \le \frac{\beta}{2} + \frac{\log|S|}{\beta}. 
\end{align*}
Optimizing over $\beta$, we establish the first claim. For the second, one just has to combine the bound on $m$ with Proposition \ref{borell}, and observe that $\max |X_i|$ is the maximum of the concatenation of the vectors $\bbx$ and $-\bbx$. 
\end{proof}
\noindent {\bf Slepian's lemma and Sudakov minoration.} The following result is an indispensable tool in the study of Gaussian processes. It was discovered by Slepian \cite{slepian62} and goes by the name of `Slepian's lemma'.
\begin{lmm}\label{slepian}
Suppose $\bbx = (X_i)_{i\in S}$ and $\bby = (Y_i)_{i\in S}$ are centered Gaussian random vectors with $\ee(X_i^2)= \ee(Y_i^2)$ for each $i$ and $\ee(X_i X_j) \ge \ee(Y_i Y_j)$ for each $i,j$. Then for each $x\in \rr$,
\[
\pp(\max_i X_i > x) \le \pp(\max_i Y_i > x).
\]
In particular, $\ee(\max_i X_i) \le \ee(\max_i Y_i)$. 
\end{lmm}
\noindent The next result is a close analog of Slepian's inequality, known as the Sudakov minoration lemma. For a proof, see Lemma 2.1.2 in \cite{talagrand05}.
\begin{lmm}
Suppose $a$ is a constant such that $\ee(X_i-X_j)^2 \ge a$ for all $i,j\in S$, $i\ne j$. Then
\[
m \ge C a\sqrt{\log |S|},
\]
where $C$ is a positive universal constant. 
\end{lmm}
\vskip.1in
\noindent{\bf Mills ratio bounds.}
The following pair of inequalities is collectively  known as the Mills ratio bounds. For a standard Gaussian random variable $Z$, for any $x>0$, we have
\begin{align*}
\frac{xe^{-x^2/2}}{\sqrt{2\pi}(1+x^2)}\leq \pp(Z > x) \leq \frac{e^{-x^2/2}}{x\sqrt{2\pi}}.\label{MillsRatio}
\end{align*}
The proof is not difficult, and may be found in numerous standard texts on probability and statistics. The inequalities in the above form were probably first proven by Gordon \cite{gordon41}. Along similar lines, one can also prove the inequality
\begin{equation}\label{gausstail}
\pp(Z > x) \le e^{-x^2/2},
\end{equation}
which follows simply by optimizing over $\pp(Z> x) \le e^{-\theta x} \ee(e^{\theta Z})$. 
\vskip.1in
\noindent {\bf Gaussian integration by parts.} Suppose $Z$ is a standard Gaussian random variable, and $f:\rr \ra \rr$ is an absolutely continuous function. If $\ee|f'(Z)| <\infty$, then one can argue that 
\begin{equation}\label{zfz}
\ee|Zf(Z)|\le C \ee|f'(Z)| + C < \infty,
\end{equation}
where $C$ is a universal constant. Moreover, a standard application of integration by parts gives the well-known identity
\[
\ee(Zf(Z)) = \ee (f'(Z)).
\]
This identity can be easily generalized to a Gaussian random vector like $\bbx$, as follows. If $f:\rr^S \ra \rr$ is an absolutely continuous function such that $\|\nabla f(\bbx)\|$ has finite expectation, then for any $i\in S$,
\[
\ee(X_i f(\bbx)) = \sum_{j\in S} R(i,j) \ee(\partial_j f(\bbx)),
\]
where $\partial_j f$ denotes the partial derivative of $f$ along the $j$th coordinate. This identity can be derived from the previous one simply by writing $\bbx$ as a linear transformation of a vector of i.i.d.\ standard Gaussian random variables. The author encountered this useful version of the integration-by-parts identity in \cite{talagrand03}, Appendix A.6.

\section{Structure of a superconcentrated Gaussian field}\label{proof:main}
The goal of this section is to prove Theorem \ref{main}. 
Throughout this section, as everywhere else, we will freely use notation from Section \ref{intro} (like $\bbx^t$, $R(i,j)$, $m$, $v$, $I^t$, and $M$) without explicit reference. 
We will divide the proof into a number of subsections, one devoted to each part of the proof. The theorems of this section are all interesting in their own right, because they give quantitative versions of the various implications of Theorem \ref{main}. 
\subsection{Superconcentration is equivalent to Chaos}
We begin with the exact formula for $v$ stated in Section \ref{intro}. 
\begin{lmm}\label{varmax}
Let $\tau$ be a standard  exponential random variable, independent of everything else. Then we have
\begin{equation}\label{mainform}
v = \ee(R(I^0,I^\tau)).
\end{equation}
\end{lmm}
\noindent Note that by the Cauchy-Schwarz inequality, this identity implies Proposition \ref{varcrude}.

Lemma \ref{varmax}, combined with an elementary  Tauberian argument, leads to the following Theorem, which establishes the equivalence of superconcentration and chaos.

\begin{thm}\label{varconv}
For each $t$ we have
\begin{align*}
&0\le \ee(R(I^0,I^t)) \le \frac{v}{1-e^{-t}},
\ \text{ and } \\
&v \le \sigma^2(1-e^{-t}) + \ee(R(I^0,I^t)) e^{-t}.
\end{align*}
Moreover, $\ee(R(I^0,I^t))$ is a decreasing function of $t$.
\end{thm}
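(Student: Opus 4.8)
The plan is to exploit the Ornstein--Uhlenbeck semigroup underlying the perturbation $\bbx\mapsto\bbx^t$. Let $\gamma$ denote the law of $\bbx$ on $\rr^S$, and for $f\in L^2(\gamma)$ define $(P_tf)(\bbx):=\ee\bigl(f(\bbx^t)\mid\bbx\bigr)$, i.e.\ the function of $\bbx$ obtained by integrating out the independent copy $\bbx'$ in $\bbx^t=\est\bbx+\esst\bbx'$. Three standard properties, all immediate from this definition, will be used: $P_sP_t=P_{s+t}$ (composing two independent perturbations and recomputing the total variance gives a perturbation of exactly the right size), each $P_t$ is self-adjoint on $L^2(\gamma)$ (the joint law of $(\bbx,\bbx^t)$ is symmetric under swapping the two coordinates, since its cross-covariance $\est R$ is symmetric), and $\|P_t\|_{L^2(\gamma)\to L^2(\gamma)}\le1$ (Jensen's inequality together with $\bbx^t\overset{d}{=}\bbx$). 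Throughout write $\phi(t):=\ee\bigl(R(I^0,I^t)\bigr)$.

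The key step is a semigroup representation of $\phi$. Since $R$ is a covariance matrix it is positive semidefinite, so $R=A^{T}A$ for some matrix $A$; set $g_l(\bbx):=A_{l,I(\bbx)}\in L^2(\gamma)$, so that $\sum_l g_l(\bbx)g_l(\bby)=R\bigl(I(\bbx),I(\bby)\bigr)$ for all $\bbx,\bby$. Conditioning on $\bbx$, and using that $I^0$ is a function of $\bbx$ while $\pp(I^t=k\mid\bbx)=(P_t\iota_k)(\bbx)$ for $\iota_k:=\mathbf 1_{\{I(\bbx)=k\}}$, a short computation yields
\[
\phi(t)=\sum_l\ee\bigl(g_l(\bbx)\,(P_tg_l)(\bbx)\bigr)=\sum_l\langle g_l,\,P_tg_l\rangle_{L^2(\gamma)}=\sum_l\bigl\|P_{t/2}g_l\bigr\|_{L^2(\gamma)}^2,
\]
where the last equality uses $P_t=P_{t/2}P_{t/2}$ and the self-adjointness of $P_{t/2}$. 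From this formula the nonnegativity $\phi(t)\ge0$ is obvious, and monotonicity follows at once as well: for $t\ge s$ one has $P_{t/2}=P_{(t-s)/2}P_{s/2}$, hence $\|P_{t/2}g_l\|\le\|P_{(t-s)/2}\|\,\|P_{s/2}g_l\|\le\|P_{s/2}g_l\|$, so $\phi(t)\le\phi(s)$. I expect this representation to be the heart of the matter; everything afterwards is a routine Tauberian argument.

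Indeed, by Lemma~\ref{varmax} we have $v=\ee(\phi(\tau))=\int_0^\infty\phi(s)e^{-s}\,ds$. For the upper bound in the first display, nonnegativity and monotonicity of $\phi$ give
\[
v\ge\int_0^t\phi(s)e^{-s}\,ds\ge\phi(t)\int_0^t e^{-s}\,ds=\phi(t)\,(1-\est),
\]
hence $\ee(R(I^0,I^t))=\phi(t)\le v/(1-\est)$. For the second display, split the integral at $t$ and use $\phi(s)\le\phi(0)=\ee(R(I^0,I^0))=\ee(\var(X_{I^0}))\le\sigma^2$ on $[0,t]$ together with $\phi(s)\le\phi(t)$ on $[t,\infty)$:
\[
v=\int_0^t\phi(s)e^{-s}\,ds+\int_t^\infty\phi(s)e^{-s}\,ds\le\sigma^2(1-\est)+\phi(t)\,\est.
\]
The remaining assertions --- $0\le\ee(R(I^0,I^t))$ and that $\ee(R(I^0,I^t))$ is decreasing in $t$ --- are exactly the two consequences of the representation noted above.

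The one point to be careful about is that $\bbx\mapsto\iota_k(\bbx)$ is not continuous; but for this theorem that is harmless, since $\iota_k$ is bounded (hence in $L^2(\gamma)$) and, unlike in Lemma~\ref{varmax}, no derivative of the argmax enters anywhere. The main obstacle is therefore conceptual rather than technical: finding the representation of $\phi(t)$ as a sum of squared $L^2(\gamma)$-norms under the semigroup, which delivers nonnegativity and monotonicity in one stroke and reduces the theorem to elementary estimates on $\int_0^\infty\phi(s)e^{-s}\,ds$.
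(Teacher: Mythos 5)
Your proof is correct and follows the same essential route as the paper: the heart of both arguments is the identity $\langle h, P_t h\rangle_{L^2} = \|P_{t/2}h\|_{L^2}^2$ (which is exactly the content of the paper's Lemma~\ref{ledoux}, proved there via the symmetric pair $(\bby^{-t/2},\bby^{t/2})$), followed by the identical Tauberian split of the integral $v=\int_0^\infty e^{-s}\phi(s)\,ds$ at $s=t$.

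The one presentational difference worth noting is how you reach the semigroup representation of $\phi(t)$. The paper passes through the i.i.d.\ picture $\bbx=B\bby$ and Lemma~\ref{intparts2}, applying Lemma~\ref{ledoux} to the partial derivatives $\partial_i g(\bby)$ of the composed maximum; you instead decompose $R=A^TA$ and work directly with the bounded functions $g_l(\bbx)=A_{l,I(\bbx)}$ under the semigroup $P_t$ on $L^2(\gamma)$, extracting monotonicity from $\|P_r\|\le 1$ rather than from the conditional-variance calculation. This buys a cleaner derivation of nonnegativity and monotonicity that needs no differentiation of the max function and no appeal to the chain-rule identity \eqref{indep}; both routes, of course, still rely on Lemma~\ref{varmax} for the final Tauberian step, so the dependence on Gaussian integration by parts is not fully eliminated, only moved out of this theorem's proof.
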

\noindent To see how the bounds imply the claim of equivalence of superconcentration and chaos, consider the following. If, in the notation of Section \ref{intro}, we have $v_n= o(\sigma_n^2)$, then by choosing $t_n = \sqrt{v_n/\sigma_n^2} = o(1)$ we can guarantee by the first bound that 
\[
\ee(R(I_n^0, I_n^{t_n})) \le O(\sqrt{v_n} \sigma_n) = o(\sigma_n^2).
\]
Again, if we can find $t_n = o(1)$ such that $\ee(R(I_n^0, I_n^{t_n})) = o(\sigma_n^2)$, then the second bound shows that $v_n = o(\sigma_n^2)$.

The proof of Lemma \ref{varmax} is done in three simple steps.
\begin{lmm}\label{intparts}
Let $\bby = (Y_i)_{i\in S}$ be a vector of independent standard Gaussian random variables, and let $\bby'$ be an independent copy of $\bby$. Let $f:\rr^S\ra \rr$ be an absolutely continuous with gradient $\nabla f$ and suppose that $\ee\|\nabla f(\bby)\|^2<\infty$. For each $t\in [0,\infty)$, let $\bby^t = \est \bby + \esst \bby'$. Then
\begin{equation}\label{firsteq}
\var (f(\bby)) = \int_0^\infty e^{-t} \ee\avg{\nabla f(\bby), \nabla f(\bby^t)} \; dt,
\end{equation}
where $\smallavg{\cdot, \cdot}$ denotes the usual inner product.
\end{lmm}
\begin{proof}
Note that
\begin{align*}
&\var(f(\bby) ) = \ee(f(\bby)(f(\bby) - f(\bby'))) \\
&= \ee \biggl(- \int_0^\infty f(\bby)\frac{d}{dt} f(\est \bby + \esst \bby') dt\biggr)\\
&= \ee\biggl(-\int_0^\infty f(\bby) \sum_{i\in S} \biggl(-e^{-t}Y_i + \frac{e^{-2t}Y'_i}{\esst}\biggr)\partial_i f(\est \bby + \esst \bby') dt\biggr).
\end{align*}
Now fix $t\in [0,\infty)$, and let
\[
\bbv^t:=\esst \bby - \est \bby'.
\]
Then $\bby^t$ and $\bbv^t$ are independent standard Gaussian random vectors and 
\[
\bby = \est \bby^t + \esst \bbv^t.
\]
Taking any $i$, and using Gaussian integration-by-parts as outlined in Section~\ref{basicfacts} (in going from the second to the third line below), we get
\begin{align*}
&\ee\biggl(f(\bby) \biggl(\est Y_i - \frac{e^{-2t}Y'_i}{\esst}\biggr)\partial_i f (\est \bby + \esst \bby')\biggr) \\
&= \frac{\est}{\esst}\ee\biggl(f(\est \bby^t + \esst \bbv^t) V^t_i \partial_i f (\bby^t)\biggr)\\
&= e^{-t} \ee\bigl( \partial_i f (\bby) \partial_i f (\bby^t)\bigr).
\end{align*}
(Note that the condition $\ee\|\nabla f(\bby)\|^2< \infty$ and the bound \eqref{zfz} allows us to integrate  by parts and interchange integrals and expectations.) 
This completes the proof of the Lemma.
\end{proof}

\begin{lmm}\label{intparts2}
Let $f$ be as in Lemma \ref{intparts}. Then
\[
\var (f(\bbx)) = \int_0^\infty e^{-t} \sum_{i, j =1}^n R(i,j) \ee(\partial_i f(\bbx) \partial_j f(\bbx^t)) \; dt.
\]
\end{lmm}
\begin{proof}
If $R = BB^T$ for some matrix $B$, then we can assume $\bbx = B \bby$ where $\bby$ is a standard Gaussian r.v. As before, let $\bby'$ be an independent copy of $\bby$ so that $\bbx' = B\bby'$ is an independent copy of $\bbx$. Putting $g(\by) := f (B \by)$, and using Lemma \ref{intparts} with $g$ instead of $f$, an easy computation gives
\begin{equation}\label{indep}
\sum_{i=1}^n \partial_i g(\bby) \partial_i g(\bby^t) = \sum_{i, j =1}^n R(i,j) \partial_i f(\bbx) \partial_j f(\bbx^t).
\end{equation}
By Lemma \ref{intparts}, this completes the proof.
\end{proof}

\begin{proof}[Proof of Lemma \ref{varmax}]
Consider the function $f(\bx) := \max_{i\in S} x_i$. We have
\[
\partial_i f(\bx ) = 1_{\{x_i \ge  x_j \forall j\}} \ \ \text{a.e.}
\]
The proof now follows easily from Lemma \ref{intparts2}.
\end{proof}

The proof of Theorem \ref{varconv} requires one more lemma. The current proof of the following result is a major simplification (thanks to Michel Ledoux) of the author's proof in the first draft. 
\begin{lmm}\label{ledoux}
Let $\bby$ and $\bby^t$ be as in Lemma \ref{intparts}. 
Then for any function $h:\rr^n \ra \rr$ such that $\ee(h(\bby)^2) < \infty$, $\ee(h(\bby)h(\bby^t))$ is a nonnegative, non-increasing function of $t$.
\end{lmm}
\begin{proof}
Fix $t\ge 0$. Let $\bby''$ be another independent copy of $\bby$. Let
\begin{align*}
\bby^{-t/2} &:= e^{-t/2} \bby + \sqrt{1-e^{-t}} \bby''.
\end{align*}
It is easy to verify (by checking covariances) that the pair $(\bby, \bby^t)$ has the same distribution as the pair $(\bby^{-t/2},\bby^{t/2})$. Again, it is trivial to see that given $\bby$, the vectors $\bby^{-t/2}$ and $\bby^{t/2}$ are independent and identically distributed. Thus,
\begin{equation}\label{ledouxrep}
\begin{split}
\ee(h(\bby) h(\bby^t)) &= \ee(h(\bby^{-t/2})h(\bby^{t/2}))\\
&= \ee\bigl((P_{t/2} h(\bby))^2\bigr),
\end{split}
\end{equation}
where
\[
P_s h(\bby) := \ee(h(\bby^s)\mid \bby). 
\]
This shows that $\ee(h(\bby)h(\bby^t))$ is nonnegative. Now, it is easy to verify that $(P_s)_{s\ge 0}$ is a semigroup of operators, that is, $P_s P_t = P_{s+t}$. Using this, note that for any $s,t\ge 0$, 
\begin{align*}
\ee\bigl((P_t h(\bby))^2 - (P_{t+s}h(\bby))^2\bigr)&= \ee\bigl((P_t h(\bby^s))^2 - (\ee(P_th(\bby^s)\mid \bby))^2\bigr)\\
&= \ee\bigl(\var(P_t h(\bby^s) \mid \bby)\bigr)\ge 0. 
\end{align*} 
Combined with the representation \eqref{ledouxrep}, this shows that $\ee(h(\bby)h(\bby^t))$ is non-increasing in $t$.
\end{proof}
\begin{proof}[Proof of Theorem \ref{varconv}]
By Lemma \ref{ledoux} and the representation \eqref{indep}, we see that $\ee(R(I^0,I^t))$ is nonnegative and non-increasing as a function of $t$. Combining this with Lemma  \ref{varmax}, we get that for any $t$,
\begin{align*}
v &= \int_0^\infty e^{-s}\ee(R(I^0,I^s)) ds\\
&\ge \int_0^te^{-s} \ee(R(I^0,I^s)) ds\\
&\ge \int_0^t e^{-s} \ee(R(I^0, I^t)) ds = (1-e^{-t}) \ee(R(I^0,I^t)).
\end{align*}
Similarly,
\begin{align*}
v &\le \sigma^2 \int_0^t e^{-s} ds + \ee(R(I^0,I^t)) \int_t^\infty e^{-s} ds\\
&= \sigma^2 (1-e^{-t}) + \ee(R(I^0,I^t)) e^{-t}.
\end{align*}
This completes the proof.
\end{proof}

\subsection{Strong Multiple Peaks implies Superconcentration}
As usual, we give a quantitative version of this result. The difference with other parts of the proof is that this proof is not an original idea of the author; it follows from a sketch given at the end of Section 8.3 in Talagrand's famous treatise on concentration inequalities  \cite{talagrand95}. To the best of our knowledge, this sketch has never been formulated as a concrete theorem before. 
\begin{thm}
Suppose $l$ is a positive integer, and $\epsilon > 0$, $\delta > 0$, and $\gamma\in [0,1]$ are numbers such that with probability $1-\gamma$, there exists a set $A\subseteq S$ such that 
\begin{enumerate}
\item[(a)] $|A|\ge l$,
\item[(b)] $R(i,j)<  \epsilon$ for all $i,j\in A$, $i\ne j$, and 
\item[(c)] $X_i \ge M - \delta$ for all $i\in A$.
\end{enumerate}
Then $v\le 3\delta^2 + 3l^{-1}\sigma^2 + 3\epsilon + 16\sigma^2\sqrt{\gamma}$.
\end{thm}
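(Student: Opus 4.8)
The plan is to extract, from the near-maximizers, an auxiliary Gaussian field whose variance is provably small, and then to transfer that bound to $M$ via an independent copy, arranging that the exceptional event of probability $\gamma$ enters only through a fourth-moment estimate for $M-m$ (so that no spurious $\log|S|$ factor appears).

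First, one may assume $\gamma<1$, since otherwise $16\sigma^2\sqrt{\gamma}\ge 16\sigma^2>\sigma^2\ge v$ by Proposition~\ref{varcrude} and there is nothing to prove. Call an $l$-element set $B\subseteq S$ \emph{good} if $R(i,j)<\epsilon$ for all distinct $i,j\in B$; at least one good set exists, since any $l$-subset of the random set $A$ is good and the hypothesis holds on an event of positive probability. Set $\overline X_B:=l^{-1}\sum_{i\in B}X_i$ and $N:=\max\{\,\overline X_B: B\ \text{good}\,\}$. For a good $B$ we have $\var(\overline X_B)=l^{-2}\sum_{i,j\in B}R(i,j)\le l^{-1}\sigma^2+\epsilon=:\rho^2$, so Proposition~\ref{varcrude} applied to the Gaussian field $(\overline X_B)_B$ gives $\var(N)\le\rho^2$. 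Moreover $\overline X_B\le M$ for every good $B$ (each is an average of numbers $\le M$), so $N\le M$ always; and on the event $G$ on which the hypothesis holds, $N\ge\overline X_{A'}\ge M-\delta$ for any $l$-subset $A'$ of $A$. Hence $K:=\{0\le M-N\le\delta\}$ satisfies $\pp(K^c)\le\gamma$.

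Next I would introduce an independent copy $\bbx'$ of $\bbx$, with corresponding quantities $M',N',K'$; since $M,M'$ are i.i.d.\ with mean $m$, $v=\tfrac12\ee[(M-M')^2]$, and I would split this expectation over $K\cap K'$ and its complement. On $K\cap K'$ the triangle inequality gives $|M-M'|\le(M-N)+|N-N'|+(M'-N')\le|N-N'|+2\delta$, and the elementary inequality $(a+2\delta)^2\le 3a^2+6\delta^2$ yields $\ee[(M-M')^2\mathbf 1_{K\cap K'}]\le 3\,\ee[(N-N')^2]+6\delta^2=6\var(N)+6\delta^2\le 6\rho^2+6\delta^2$. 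On the complement, which has probability at most $2\gamma$, Cauchy--Schwarz together with the fact that $M-m$ and $M'-m$ are independent, centered, and obey Proposition~\ref{borell} gives $\ee[(M-M')^2\mathbf 1_{(K\cap K')^c}]\le(\ee[(M-M')^4])^{1/2}(2\gamma)^{1/2}$, where $\ee[(M-M')^4]=2\,\ee[(M-m)^4]+6v^2\le 38\sigma^4$ (using $\ee[(M-m)^4]=\int_0^\infty 4r^3\pp(|M-m|>r)\,dr\le 16\sigma^4$ and $v\le\sigma^2$). Adding the two pieces and halving gives $v\le 3\rho^2+3\delta^2+\sqrt{19}\,\sigma^2\sqrt{\gamma}\le 3\delta^2+3l^{-1}\sigma^2+3\epsilon+16\sigma^2\sqrt{\gamma}$.

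The one genuinely delicate ingredient is the treatment of the exceptional event: a naive pointwise bound such as $M-N\le\max_i X_i-\min_i X_i$ would cost a factor of $\log|S|$ in the $\gamma$-term, which is fatal. Passing to the independent copy is exactly what avoids this, because $M-M'=(M-m)-(M'-m)$ is controlled purely through the Gaussian concentration of $M$ about $m$ (Proposition~\ref{borell}), whose parameter is $\sigma^2$ with no dimensional dependence; the fourth-moment bound is then routine. The particular choice of constant in $(a+2\delta)^2\le 3a^2+6\delta^2$ is what produces the clean factors of $3$ in the statement, and the remaining slack ($\sqrt{19}<16$) is comfortable.
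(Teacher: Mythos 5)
Your proof is correct and follows essentially the same route as the paper's: you define the maximum $N$ of averages over $l$-tuples of near-orthogonal indices (the paper calls this $\om$), observe $\var(N)\le l^{-1}\sigma^2+\epsilon$ by Proposition~\ref{varcrude}, pass to an independent copy, split $\ee(M-M')^2$ over the event that the hypothesis holds for both copies, and control the exceptional event via Cauchy--Schwarz and a fourth-moment bound from Proposition~\ref{borell}. The only difference is cosmetic: you compute $\ee(M-M')^4$ exactly as $2\ee(M-m)^4+6v^2\le 38\sigma^4$, whereas the paper bounds it via a tail estimate on $|M-M'|$ and gets $512\sigma^4$, so your constant $\sqrt{19}$ is sharper than the paper's $16$ in the $\sqrt{\gamma}$ term.
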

\noindent It is easy to see from the above result how we get superconcentration if we have a sequence of Gaussian fields as in Section \ref{intro} satisfying the strong MP condition with $l_n \ra \infty$, $\epsilon_n = o(\sigma_n^2)$, $\delta_n = o(\sigma_n)$ and $\gamma_n \ra 0$. 
\begin{proof}
Let 
\[
U := \{(i_1,i_2,\ldots,i_l): i_1,\ldots,i_l\in S, \; R(i_p, i_q) < \epsilon \text{ for all } p\ne q\}.
\]
For each $(i_1,\ldots,i_l)\in U$, let
\[
Z_{(i_1,\ldots,i_l)} := \frac{1}{l}\sum_{p=1}^l X_{i_p}.
\]
A simple computation shows that 
\[
\var(Z_{(i_1,\ldots,i_l)}) \le l^{-1}\sigma^2 + \epsilon. 
\]
Thus, if we let
\[
\om := \max_{(i_1,\ldots,i_l)\in U} Z_{(i_1,\ldots,i_l)}, 
\]
then by Proposition \ref{varcrude}
\begin{equation}\label{vam}
\var(\om) \le l^{-1}\sigma^2 + \epsilon.
\end{equation}
Now, let $\bbx'$ be an independent copy of $\bbx$, and define $M'$,  $Z'_{(i_1,\ldots,i_l)}$, and $\om'$ accordingly. 
Let $E$ denote the event of the existence of a set $A$ satisfying (a), (b), and (c) for the vector $\bbx$ and a set $A'$ satisfying (a), (b), and (c) for the vector $\bbx'$. If $E$ happens, then $|M-\om|\le \delta$ and $|M'-\om'|\le \delta$. By the inequality $(x+y+z)^2 \le 3(x^2 + y^2 + z^2)$ and the Cauchy-Schwarz inequality in the third step below, we have
\begin{align*}
2\var(M) &= \ee(M - M')^2\\
&= \ee((M - M')^2; E) + \ee((M-M')^2; E^c)\\
&\le 6\ee((M-\om)^2; E) + 3\ee((\om - \om')^2; E) \\
&\quad + \bigl[\pp(E^c)\ee(M-M')^4 \bigr]^{1/2}\\
&\le 6\delta^2 + 3\ee(\om-\om')^2 + (2\gamma\;\ee(M-M')^4)^{1/2}.
\end{align*}
By \eqref{vam}, the second term is bounded by $6(l^{-1}\sigma^2+\epsilon)$. By Proposition \ref{borell},
\begin{align*}
\ee(M-M')^4 &\le 16\sigma^4\int_0^\infty x^3 e^{-x^2/8} dx = 512\sigma^4.
\end{align*}
This completes the proof.
\end{proof}

\subsection{Under positivity, Chaos implies Multiple Peaks} The goal of this subsection is to prove that if $R(i,j) \ge 0$ for all $i, j$, then the property of chaos guarantees the multiple peaks condition. As before, we have a quantitative statement. 
\begin{thm}\label{p2p3}
Suppose $R(i,j)\ge 0$ for all $i,j$.  Then for any integer $l\ge 2$, any $\epsilon \in (0,\sigma^2)$, and any $\delta \in (0,m)$, we have that
with probability at least 
\begin{equation}\label{prob}
1 - 4\sqrt{\frac{vml^3\log l}{\delta \epsilon }}  - 4\biggl(\frac{v\sigma^4l^5 \log l}{\delta^3 m \epsilon}\biggr)^{1/4}
\end{equation}
there exists $A\subseteq S$ such that
\begin{enumerate}
\item[(a)] $|A| = l$,
\item[(b)] $R(i,j) < \epsilon$ for all $i,j\in A$, $i\ne j$, and
\item[(c)] $X_i\ge M -  \delta$ for all $i\in A$.
\end{enumerate}
\end{thm}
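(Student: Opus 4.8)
The plan is to use the quantitative superconcentration-implies-chaos machinery of Theorem~\ref{varconv} to first show that $R(I^0, I^t)$ is typically small for a suitable small $t$, and then to bootstrap this single ``far-away site'' into a whole cluster of $l$ nearly-maximal, pairwise-decorrelated sites. The positivity assumption $R(i,j) \ge 0$ is essential in the second step: it will let us iterate, because once we have found a partial set of good sites, the conditional field obtained by reducing along those directions still has nonnegative covariances, so the argument can be repeated. Concretely, first I would pick the perturbation time $t$ (and an auxiliary threshold) optimally in terms of $v, m, \sigma^2, l, \epsilon, \delta$ at the very end; for the write-up it is cleanest to keep $t$ as a free parameter and optimize last.

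The main construction: run $l-1$ successive independent perturbations. Start with $\bbx^{(0)} = \bbx$ and $i_0 = I^0$. Having produced decorrelated near-maxima $i_0, \dots, i_{k-1}$, consider a fresh perturbation $\bbx^{t}$ of $\bbx$ at time $t$; its argmax $I^t$ satisfies $\ee R(I^0, I^t) \le v/(1-e^{-t})$ by Theorem~\ref{varconv}, and by Markov's inequality $R(i_0, I^t) < \epsilon'$ with probability at least $1 - v/((1-e^{-t})\epsilon')$ for any threshold $\epsilon'$. The subtle point is getting decorrelation from ALL previously chosen sites simultaneously, not just from $i_0$, and controlling how near-maximal the new site is. For the latter, I would use that $X^t_{I^t} = M^t$ and, writing $\bbx^t = \est\bbx + \esst\bbx'$, one has $X_{I^t} \ge X^t_{I^t}/\est - (\esst/\est)X'_{I^t} \ge M/\est - (\esst/\est)|X'_{I^t}|$; since $\ee|X'_{I^t}| \le \ee\max_i|X'_i| \le C\sigma\sqrt{\log|S|}$ by Lemma~\ref{max}, and $M^t \ge M - |M^t - M|$ with $M^t - M$ small in $L^2$ (it is a centered perturbation with $\ee(M^t - M)^2 \le 2v(1-e^{-t})$ roughly, since $\ee M^t = \ee M$ and $\ee(M^t M) = v - $ something controllable), one deduces $X_{I^t} \ge M - \delta$ with good probability when $t$ is small. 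Iterating over $k = 1, \dots, l-1$ and using a union bound over the $l$ (or $l^2$) pairs and over the $l$ failure events of the above type produces the set $A = \{i_0, \dots, i_{l-1}\}$; if two of the $i_k$ happen to coincide or fail to be pairwise decorrelated we declare failure, and the union bound absorbs this. The various powers of $l$ and $\log l$ in \eqref{prob} will come out of choosing the thresholds $\epsilon'$ and the $\delta$-budget per site as $\epsilon/l$-type and $\delta/l$-type quantities so that the $l$-fold union bound still leaves room, combined with the two distinct sources of error (a Markov bound on $R$ giving a square root, and a Cauchy--Schwarz/fourth-moment bound giving the fourth root).

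The hard part will be the bookkeeping to get simultaneous decorrelation of the new site from all $k$ previously selected sites rather than just from $i_0$. The clean way around this is probably to not re-use the same $\bbx$ each time but instead, at stage $k$, to work with a fresh independent perturbation of $\bbx$ and to absorb the correlation with $i_1, \dots, i_{k-1}$ into the probability estimate: the event that $I^{t_k}$ (for the $k$-th perturbation) is $\epsilon$-decorrelated from a FIXED site has probability controlled by Markov, and summing $\ee R(i_p, I^{t_k})$ over $p < k$ — each term bounded via Theorem~\ref{varconv} applied to the appropriate pair — gives the simultaneous bound with an extra factor of $k \le l$. Positivity guarantees all these covariances are nonnegative so the bounds do not interfere. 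The remaining technical nuisance is that the $l$ chosen sites must be distinct with the stated probability; this follows because distinct sites with $R(i,j) < \epsilon < \sigma^2 = R(i,i)$ (note $R$ need not be constant on the diagonal here, but the argument only needs $R(i,j) < \epsilon \le$ both $\sqrt{R(i,i)R(j,j)}$-type bounds to force $i \ne j$) cannot coincide, so condition (b) automatically upgrades (a) from ``$|A| \ge l$'' to ``$|A| = l$'' after possibly discarding extras, and the final probability is the one displayed in \eqref{prob} after the optimization over $t$.
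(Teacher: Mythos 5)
Your overall plan -- perturb the field, control $R(I^0,I^t)$ via Theorem~\ref{varconv} and Markov, union-bound over pairs, and take the argmaxes of the perturbed fields as the set $A$ -- is exactly the paper's strategy. The paper implements the ``successive perturbations'' with a single Markov chain $\bbz^{(0)}=\bbx$, $\bbz^{(k)}=e^{-t}\bbz^{(k-1)}+\esst\,\bbx^{(k)}$, which gives the clean pair identity $(\bbz^{(j)},\bbz^{(k)})\stackrel{d}{=}(\bbx^0,\bbx^{|j-k|t})$ so that Theorem~\ref{varconv} applies verbatim to every pair; your alternative of $l-1$ fresh independent perturbations of $\bbx$ (which makes any two of the new fields jointly distributed as $(\bbx^0,\bbx^{2t})$) would also work, and this difference is cosmetic. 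So the bookkeeping concern you flagged about simultaneous decorrelation is not the real problem.

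The genuine gap is in how you bound the near-maximality of $X_{I^t}$. You write $X_{I^t}=M^t/\est-(\esst/\est)X'_{I^t}$ and then bound $\ee|X'_{I^t}|\le\ee\max_i|X'_i|\le C\sigma\sqrt{\log|S|}$ by Lemma~\ref{max}. This is unavoidable given that decomposition, because $\bbx'$ is \emph{not} independent of $I^t$ (the index $I^t$ depends on $\bbx'$ through $\bbx^t$), but it costs you a full factor of $\sqrt{\log|S|}\approx m/\sigma$. The paper instead uses the orthogonal decomposition $\bbx=\est\,\bbx^t+\esst\,\bbw$ with $\bbw:=\esst\,\bbx-\est\,\bbx'$, which is \emph{independent} of $\bbx^t$ and hence of $I^t$, so $\ee|W_{I^t}|\le\bigl(\ee\,W_{I^t}^2\bigr)^{1/2}\le\sigma$ by conditioning on $I^t$ and Cauchy--Schwarz. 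That single independence trick is what yields the $\sqrt{2kt}\,\sigma$ term rather than $\sqrt{2kt}\,\sigma\sqrt{\log|S|}$ in the bound $\ee|X_{L_k}-m|\le\sqrt{v}+ktm+\sqrt{2kt}\,\sigma$, and tracing the optimization over $t$ shows it is precisely what produces the $\sigma^4$ in \eqref{prob}. Without it you only prove a strictly weaker statement with an extra $(\log|S|)^{1/2}$ factor in the second term.

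A smaller point: the asserted estimate $\ee(M^t-M)^2\le 2v(1-e^{-t})$ is actually false -- the inequality runs the other way. Since, conditionally on $\bbx^{t/2}$, the pair $(\bbx,\bbx^t)$ is i.i.d., one has $\cov(M,M^t)=\var(P_{t/2}M)\le e^{-t}v$ by the Poincar\'e inequality for the Ornstein--Uhlenbeck semigroup, so $\ee(M^t-M)^2=2v-2\cov(M,M^t)\ge 2v(1-e^{-t})$. Fortunately nothing of this sort is needed: once you have the independent decomposition above, you only use $\ee(Z^{(k)}_{L_k}-m)^2=v$ (i.e.\ that each perturbed maximum is again distributed as $M$) plus Markov, as the paper does.
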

\noindent It is not immediately clear why this theorem shows that chaos (or equivalently, superconcentration) implies multiple peaks. Let us prove that implication before starting the proof of Theorem \ref{p2p3}. Recall the sequence $\bbx_n$ from Section \ref{intro} and the associated quantities. Suppose $v_n = o(\sigma_n^2)$. Let
\[
\alpha_n := \frac{v_n}{\sigma_n^2}.
\]
Put 
\[
\epsilon_n := \alpha_n^{1/3}\sigma_n^2, \ \ \delta_n := \alpha_n^{1/9} m_n, \ \ l_n := [\alpha_n^{-1/24}]. 
\]
Then for sufficiently large $n$, a simple computation gives
\begin{align*}
\sqrt{\frac{v_nm_nl_n^3\log l_n}{\delta_n \epsilon_n }}  + \biggl(\frac{v_n\sigma_n^4l_n^5 \log l_n}{\delta_n^3 m_n \epsilon_n}\biggr)^{1/4} &\le \sqrt{\frac{v_nm_nl_n^4}{\delta_n \epsilon_n }}  + \biggl(\frac{v_n\sigma_n^4l_n^6}{\delta_n^3 m_n \epsilon_n}\biggr)^{1/4}\\
&\le \alpha_n^{7/36} + \alpha_n^{1/48} \frac{\sigma_n}{m_n}.
\end{align*}
Clearly, $\epsilon_n = o(\sigma_n)$, $\delta_n = o(m_n)$, and $l_n \ra \infty$. So the above bound implies multiple peaks as soon as we show that $\sigma_n/m_n$ remains bounded. More is true, though. Let us now prove that $\sigma_n = o(m_n)$. Let $x_+$ denote the positive part of a real number $x$, and let
\[
\psi(x) := \ee(Z-x)_+^2,
\]
where $Z\sim N(0,1)$. Clearly, $\psi$ is everywhere positive and continuous, and $\lim_{x\ra \infty} \psi(x) = 0$. For each $n$, suppose $i_n$ is a coordinate such that $\var(X_{n,i_n})=\sigma_n^2$. Since surely $M_n \ge X_{n,i_n}$, we have 
\begin{align*}
v_n &= \ee(M_n - m_n)^2 \ge \ee(M_n - m_n)_+^2\\
&\ge \ee(X_{n,i_n} - m_n)_+^2 = \sigma_n^2 \psi(m_n/\sigma_n).
\end{align*}
Since $v_n/\sigma_n^2 \ra 0$, this shows that $m_n/\sigma_n \ra \infty$, and completes our argument for chaos $\Rightarrow$ MP using Theorem \ref{p2p3}.

Let us now give a sketch of the proof of Theorem \ref{p2p3}. The idea is very simple: due to the chaos property, we can perturb the field a little bit to get a new maximum at a location `far away' from the original maximum. However, since the perturbation is small and the value of the maximum is concentrated, it follows that the new location of the maximum must have been a near-maximal location in the unperturbed field. This shows the existence of at least two near-maximal points that are `far away' from each other. Repeating the process as many times as we can, we get a large number of near-maxima. 
\begin{proof}[Proof of Theorem \ref{p2p3}]
Let 
\[
t:= \sqrt{\frac{v\delta \log l}{lm\epsilon}}.
\]
Since $\delta < m$, we see that the quantity \eqref{prob} is negative if $ltm/\delta> lt > 1$. Thus, we can assume without loss of generality that $lt \le 1$. Then for any $1\le k\le l$, we have the crude bound
$1-e^{-kt} \ge kt/2$.
Thus,  by Theorem \ref{varconv}, 
\begin{equation}\label{eq1}
\sum_{k=1}^l \ee(R(I^0,I^{kt})) \le \sum_{k=1}^l \frac{v}{1-e^{-kt}}\le \sum_{k=1}^l\frac{2v }{kt} \le \frac{2v\log l}{t}.
\end{equation}
Let $\bbx^{(1)},\ldots,\bbx^{(l)}$ be i.i.d.\ copies of $\bbx$, and recursively define $\bbz^{(0)},\ldots,\bbz^{(l)}$ as follows. Let $\bbz^{(0)} := \bbx$, and for each $k$, let
\[
\bbz^{(k)} := e^{-t} \bbz^{(k-1)} + \esst \bbx^{(k)}.
\]
For each $k$, let
\[
L_k := \argmax_{i\in S} Z^{(k)}_i.
\]
It is easy to see by induction that $\bbz^{(k)}$ has the same distribution as $\bbx$ for every $k$, and 
\[
\cov(\bbz^{(j)}, \bbz^{(k)}) = e^{-t|j-k|} R.
\]
This shows that 
\[
(\bbz^{(j)}, \bbz^{(k)}) \stackrel{dist.}{=} (\bbx^0, \bbx^{|j-k|t}). 
\]
In particular, 
\[
\ee(R(L_j,L_k)) = \ee(R(I^0, I^{|j-k|t})).
\]
Thus, using \eqref{eq1} and the positivity of $R(i,j)$ we get that 
\begin{equation}\label{eq3}
\begin{split}
\pp\bigl(\max_{1\le j\ne k\le l} R(L_j,L_k) \ge \epsilon \bigr) &\le \frac{1}{\epsilon} \sum_{1\le j\ne k\le l} \ee(R(L_j,L_k))\\
&\le \frac{l}{\epsilon}\sum_{k=1}^l \ee(R(I^0, I^{kt})) \le \frac{2vl\log l}{\epsilon t}.
\end{split}
\end{equation}
Now fix some $k\le l$. Note that $\bbz^{(k)}$ can be written as
\[
\bbz^{(k)} = e^{-kt} \bbx + \sqrt{1-e^{-2kt}} \bby,
\]
where $\bby$ is an independent copy of $\bbx$. Thus, if we let 
\[
\bbw := \sqrt{1-e^{-2kt}} \bbx - e^{-kt} \bby,
\]
then $\bbz^{(k)}$ and $\bbw$ are independent (because $\cov(\bbz^{(k)}, \bbw) = \mathbf{0}$), and
\[
\bbx = e^{-kt} \bbz^{(k)} + \sqrt{1-e^{-2kt}} \bbw.
\]
Therefore,
\begin{align*}
\ee|X_{L_k} - m| &\le \ee|e^{-kt}Z^{(k)}_{L_k} - m| + \sqrt{1-e^{-2kt}}\ee|W_{L_k}|\\
&\le e^{-kt} \ee|Z^{(k)}_{L_k} - m| + (1-e^{-kt})m +  \sqrt{1-e^{-2kt}}\ee|W_{L_k}|\\
&\le \sqrt{v} + ktm + \sqrt{2kt}\sigma .
\end{align*}
Note that we crucially used the independence of $\bbz^{(k)}$ and $\bbw$ in the last line to conclude that $\ee|W_{L_k}|\le\sigma$. From the above bound and Markov's inequality, we get
\begin{align*}
\pp\bigl(\max_{1\le k\le l} |X_{L_k} - m| \ge {\textstyle\frac{1}{2}}\delta \bigr) &\le 
\sum_{1\le k\le l} \pp\bigl(|X_{L_k}-m|\ge {\textstyle\frac{1}{2}}\delta )\\
&\le \frac{2(l\sqrt{v} + l^2tm + l^{3/2}\sigma \sqrt{2t})}{\delta}.
\end{align*}
Again, applying Markov's inequality we have
\begin{equation*}\label{eq4}
\pp(|M- m| \ge {\textstyle\frac{1}{2}}\delta) \le \frac{2\sqrt{v}}{\delta}. 
\end{equation*}
Combining the last two inequalities and the inequality~\eqref{eq3}, we get
\begin{align*}
&\pp\bigl(\max_{1\le j\ne k\le l} R(L_j,L_k) \ge \epsilon \ \ \text{or} \  \max_{1\le k\le l} |X_{L_k} - m| \ge {\textstyle\frac{1}{2}}\delta \ \ \text{or} \ \ |M- m| \ge {\textstyle\frac{1}{2}}\delta)\\
&\le  \frac{2vl\log l}{\epsilon t} + \frac{2(l\sqrt{v} + l^2tm + l^{3/2}\sigma \sqrt{2t})}{\delta} + \frac{2\sqrt{v}}{\delta}\\
&= \sqrt{\frac{16vml^3\log l}{\delta \epsilon }}  + \frac{(2l+2)\sqrt{v}}{\delta} + \biggl(\frac{4v\sigma^4l^5 \log l}{\delta^3 m \epsilon}\biggr)^{1/4}.
\end{align*}
Now, since $\epsilon < \sigma^2$ and $l\ge 2$, the ratio of the second term to the third is bounded by
\begin{align*}
\biggl(\frac{64vm \epsilon}{\delta \sigma^4l\log l}\biggr)^{1/4} \le \biggl(\frac{64vm }{\delta \epsilon l\log l}\biggr)^{1/4} \le \biggl(\frac{16vm l^3\log l}{\delta \epsilon}\biggr)^{1/4},
\end{align*}
which is precisely the square-root of the first term. Without loss of generality, we can assume that the first term is $\le 1$ (because we are bounding a probability), and therefore the third term dominates the second. 

The proof is finished by defining $A:= \{L_1,\ldots,L_l\}$. Note that although the construction of the set $A$ involved auxiliary randomness, the {\it existence} of a set like $A$ depends just on the vector $\bbx$, and hence the probability of existence is not affected when we expand the probability space.
\end{proof}

\subsection{Multiple Peaks does not imply Chaos}
In this subsection, we exhibit a sequence of Gaussian fields $(\bbx_n)_{n\ge 1}$ that satisfies the Multiple Peaks condition but is not chaotic. We will also arrange it so that the field has positive correlations between sites and the same variance at each site, just to show that these two factors don't play any role.

For simplicity, we fix $n$ and avoid putting it as a subscript. Let 
\[
(g_{ij}^k, 1\le i\le n, \ j\in \{0,1\}, \ 1\le k\le n)
\]
be a collection of i.i.d.\ standard Gaussian random variables. Let $T$ be the set of all maps from $\{1,\ldots,n\}$ into $\{0,1\}$. For each $f\in T$ and each $k\le n$, define
\[
Y_f^k := \frac{\sum_{i=1}^n g^k_{if(i)}}{\sqrt{n}}.
\]
Let $(Z_k)_{k\le n}$ be i.i.d.\ standard Gaussian random variables, and let 
\[
\rho := 1- n^{-1/3}.
\]
Finally, define the vector $\bbx=(X_f^k)_{f\in T, \; k\le n}$ as
\[
X_f^k = 
\begin{cases}
Y_f^k &\text{ if } k = 1,\\
\rho Y_f^k + \sqrt{1 -\rho^2} Z_k & \text{ if } k > 1.
\end{cases}
\]
Then note that for each $f, f'\in T$ and $k, k'\le n$,
\[
\var(X_f^k) = 1, \ \ R((k,f), (k',f')) := \cov(X_f^k, X_{f'}^{k'}) \ge 0. 
\]
Thus, in our notation, we have
\[
\sigma_n^2 = \max_{f\in T, \ k\le n} \var(X_f^k) = 1.
\]
We claim that $\bbx$ has multiple peaks but is not chaotic. First, we show the existence of multiple peaks. Observe that for any $k\le n$,
\begin{equation}\label{ymax}
\max_f Y_f^k = \frac{\sum_{i=1}^n \max\{g_{i0}, g_{i1}\}}{\sqrt{n}}.
\end{equation}
Thus, if 
\[
\mu:= \ee\max\{g_{10}, g_{11}\},
\]
and
\[
M_k := \max_f X_f^k, 
\]
then we have
\[
\ee\bigl(M_k) = 
\begin{cases}
\mu\sqrt{n} &\text{ if } k =1,\\
\rho\mu \sqrt{n} &\text{ if } k>1.
\end{cases} 
\]
Note that
\[
\max_k M_k =  \max_{f\in T, \; 1\le k\le n} X_f^k =:M.
\]
In particular,
\begin{equation*}\label{mbd}
m_n:=\ee(M) \ge \mu \sqrt{n}.
\end{equation*}
By Proposition \ref{borell}, for each $k$ we have
\[
\pp(|M_k - \ee(M_k)| \ge 2\mu n^{1/6}) \le 2e^{-2\mu^2n^{1/3}}.
\]
Thus,
\[
\pp(\max_k |M_k - \ee(M_k)| \ge 2\mu n^{1/6}) \le 2ne^{-2\mu^2n^{1/3}}.
\]
Now, 
\begin{align*}
\max_k |M_k - \mu \sqrt{n}|&\le \max_k |M_k - \ee(M_k)| + \max_k |\ee(M_k) - \mu\sqrt{n}|\\
&\le \max_k |M_k - \ee(M_k)| + \mu n^{1/6}.
\end{align*}
Therefore,
\begin{equation}\label{p4bd}
\pp\bigl(\max_k |M_k - \mu\sqrt{n}| \ge 3\mu n^{1/6}\bigr) \le 2n e^{-2\mu^2 n^{1/3}}.
\end{equation}
Now choose
\[
A = \{(1,f_1),(2,f_2),\ldots,(n,f_n)\},
\]
where $f_k$ is the $f$ that maximizes $X^k_f$. 
By \eqref{p4bd}, if we take $\gamma_n = 2ne^{-2\mu^2n^{1/3}}$, then with probability $\ge 1-\gamma_n$, we have 
\[
\text{for all $k$,} \ \ X_{f_k}^k = M_k \ge M - 6\mu n^{1/6}.
\]
Also note that $|A|=n$ and $R((i,f_i),(j,f_j)) = 0$ for all $i\ne j$. Thus, the multiple peaks  condition holds with $l_n = n$, $\epsilon_n = 0$, $\delta_n = 6\mu n^{1/6}$, and $\gamma_n$ as above. Clearly, $l_n \ra \infty$, $\epsilon_n = o(\sigma_n^2)$, $\delta_n = o(m_n)$, and $\gamma_n \ra 0$. 

Next, let us show that $\bbx$ is not chaotic. We accomplish this by showing the $\bbx$ is not superconcentrated. Since
\[
\rho\mu \sqrt{n} = \mu \sqrt{n} - \mu n^{1/6},
\]
Proposition \ref{borell} implies that
\begin{align*}
\pp(M_1 < M) &\le \pp\bigl(M_1 < \mu \sqrt{n} - \frac{\mu}{2}n^{1/6}\bigr) \\
&\quad + \pp\bigl(\max_{2\le k\le n} M_k > \rho \mu \sqrt{n} + \frac{\mu}{2}n^{1/6}\bigr)\\
&\le n e^{-\frac{1}{8}\mu^2 n^{1/3}}.
\end{align*}
Therefore,
\begin{align*}
v_n := \var(M) &\ge \ee((M-\ee(M))^2; M_1 = M)\\
&= \ee(M_1 - \ee(M))^2 - \ee((M_1 - \ee(M))^2; M_1 < M)\\
&\ge \var(M_1) - \bigl[\ee(M_1-\ee(M))^4 \pp(M_1 < M)]^{1/2}.
\end{align*}
From \eqref{ymax} we see that $\var(M_1) = \var(\max\{g_{10}, g_{11}\})$. From Proposition~\ref{borell}, it follows that $\ee(M_1-\ee(M_1))^4 \le C$, where $C$ is a universal constant. Again, by an argument similar to Lemma \ref{max} (using the Gaussian tail bounds for $M_1,\ldots,M_n$ that we get by Proposition \ref{borell}), we can show that $|\ee(M_1)-\ee(M)|\le C\sqrt{\log n}$. Combining these observations and the exponentially decaying bound on $\pp(M_1< M)$ obtained above, we see that $v_n$ can be bounded below by a positive constant that does not depend on $n$. Since $\sigma_n^2 \equiv 1$, this completes the argument.

\subsection{Chaos does not imply Strong Multiple Peaks}
In this section, we produce a sequence of Gaussian fields that is superconcentrated, but does not have multiple peaks in the strong sense. As in the previous section, we fix $n$ and avoid putting it as a subscript. 
Let $(g_{ij})_{1\le i,j\le n}$ be a collection of i.i.d.\ standard Gaussian random variables. Let $\mf$ be the set of all functions from $\{1,\ldots,n\}$ into itself. For each $f\in \mf$, let
\[
X_f := \frac{\sum_{i=1}^n g_{if(i)}}{\sqrt{n}}.
\]
Then $\bbx := (X_f)_{f\in \mf}$ is a centered Gaussian random vector, with $\ee(X_f^2) = 1$ for each $f$, and for any $f,f'$,
\[
R(f,f') :=  \cov(X_f, X_{f'}) = \frac{|\{i: f(i)=f'(i)\}|}{n}\in [0,1].
\]
Now, if we let
\[
M_i := \max_{1\le j\le n} g_{ij},
\]
then clearly,
\[
M:= \max_f X_f = \frac{\sum_{i=1}^n M_i}{\sqrt{n}}.
\]
Since $M_1,\ldots,M_n$ are i.i.d.\ and $\var(M_i)\le C/\log n $ for some universal constant $C$ (well-known result; see Proposition \ref{corrbd}), therefore we have
\[
\var(M)\le \frac{C}{\log n}.
\]
This shows that $\bbx$ is superconcentrated. Let us now show that multiple peaks are not present in the strong sense. We need the following simple lemma about the difference between the largest and second largest values in a realization of $n$ i.i.d.\ standard Gaussian random variables.
\begin{lmm}
Suppose $Z_1,\ldots,Z_n$ are i.i.d.\ standard Gaussian random variables. Let $D$ denote the difference between the largest and second largest of these values. There is a constant $\rho >0$ such that for any $n$,
\[
\pp\biggl(D > \frac{\rho}{\sqrt{\log n}}\biggr) >  \frac{7}{8}.
\]
\end{lmm}
\begin{proof}
Let $M_n$ and $M_n^-$ denote the largest and second largest values among $Z_1,\ldots,Z_n$. Let
\[
b_n = \sqrt{2\log n} - \frac{\log \log n + \log (4\pi)}{2\sqrt{2\log n}}, \ \ a_n = \sqrt{2\log n}.
\]
Let $\Phi(x) = \pp(Z_1 \le x)$. Then clearly, for any $-\infty < y < x < \infty$,
\[
\pp(M_n^- \le x,\; M_n > y) = n (1-\Phi(y))\Phi(x)^{n-1}.
\]
Using this and the Mills ratio bounds from Section \ref{basicfacts}, it is easy to show that for any $-\infty < u< v< \infty$,
\[
\lim_{n\ra\infty}\pp\bigl(a_n(M_n^- - b_n) \le u, \; a_n(M_n - b_n) > v\bigr) = e^{-u} e^{-e^{-v}}.
\]
From this, it can deduced that $a_n(M_n - M_n^-)$ converges in law to a distribution that has a positive density on $(0,\infty)$ and no point masses. This proves the lemma.
\end{proof}

Let us now finish the proof of the nonexistence of strong multiple peaks in the field $\bbx$. For each $i$, let $M_i^-$ be the second largest value among $g_{i1},\ldots,g_{in}$. Let $D_i := M_i - M_i^-$. Let $D_{(1)} \le D_{(2)}\le \cdots \le D_{(n)}$ denote the $D_i$'s arranged in increasing order. 
Let $\rho$ be the constant from the above lemma. Then by the lemma and the weak law of large numbers we see that as $n \ra \infty$,
\begin{equation*}\label{ordbd0}
\lim_{n \ra \infty} \pp\biggl( \frac{1}{n}\biggl|\biggl\{i: D_i > \frac{\rho}{\sqrt{\log n} }\biggr\}\biggr| \ge \frac{7}{8}\biggr) = 1. 
\end{equation*}
(Note that we should have written $D_{n,i}$ instead of $D_i$, but we will be slack about this kind of thing.) It is not difficult to see from here that
\begin{equation}\label{ordbd}
\lim_{n\ra \infty} \pp\biggl(\sum_{i=1}^{[n/4]}D_{(i)} \ge \frac{\rho n}{8\sqrt{\log n}}\biggr) = 1.
\end{equation}
Let $f^*$ denote the maximizing function, that is $f^*(i) = \argmax_j g_{ij}$. Suppose $f,f'$ are two functions such that $R(f,f') < 1/2$. Then we must have that $\min\{R(f,f^*), R(f', f^*)\} < 3/4$, because if $R(f,f^*)\ge 3/4$ and $R(f',f^*)\ge 3/4$,  then $R(f,f') \ge 1/2$. Thus,
\begin{equation}\label{ordbd2}
X_{f^*} - \min\{X_f, X_{f'}\} \ge \frac{\sum_{i=1}^{[n/4]} D_{(i)}}{\sqrt{n}}.
\end{equation}
Let $E$ denote the following event: For any  $f, f'\in \mf$ such that $R(f,f')< 1/2$, at least one of $X_f$ and $X_{f'}$ is less than
\[
X_{f^*} - \frac{\rho}{8}\sqrt{\frac{n}{\log n}}.
\]
Then from \eqref{ordbd} and \eqref{ordbd2}, we see that $\pp(E)\ra 1$ as $n \ra\infty$. This shows that the Strong Multiple Peaks condition does not hold for the sequence of Gaussian fields under consideration. 

\section{Hypercontractivity}\label{hypercon}
Suppose we have a semigroup of operators $(P_t)_{t\ge 0}$, acting on some space of functions on $\rr^n$ (semigroup means $P_tP_s = P_{t+s}$). Suppose $\mu$ is an invariant measure for the semigroup, meaning that $\int P_t f d\mu = \int f d\mu$ for any $f$ in the domain of $P_t$. The semigroup is said to be `hypercontractive' if for any $t >0$, there are numbers $q > p > 1$ (possibly depending on $t$) such that $P_t$ maps $L^{q}(\mu)$ into $L^p(\mu)$ and moreover, $\|P_tf \|_{q} \le \|f\|_p$ for all $f\in L^q(\mu)$. Here $\|\cdot\|_p$ denotes the standard $L^p$-norm on $L^p(\mu)$. This phenomenon was discovered by Nelson \cite{nelson66} and established for the Ornstein-Uhlenbeck semigroup by him a few years later  \cite{nelson73}. The O-U semigroup on $\rr^n$ is defined as follows. Let $\bbz$ be a standard Gaussian random vector in $\rr^n$. For any function $f$ and $t\ge 0$, define the function $P_tf$ as 
\[
P_tf(\bx) := \ee f(\est \bx + \esst \bbz). 
\]
It is easy to see that the standard Gaussian measure on $\rr^n$, which we denote by $\gamma^{\otimes n}$,  is an invariant measure for this semigroup. 
Nelson \cite{nelson73} showed that for any $p > 1$ and $t\ge 0$, if we let $q = 1+e^{2t} (p-1) > p$, then for all $f\in L^q(\gamma^{\otimes n})$, we have
\begin{equation}\label{hyperbd}
\|P_t f\|_q \le \|f\|_p.
\end{equation}
Note that the result does not depend on the dimension $n$ at all. This is one of the remarkable properties that make hypercontractivity a deep and powerful tool. 

The study of hypercontractive semigroups was given a major boost by the discovery of the connection between hypercontractivity and logarithmic Sobolev inequalities by Gross \cite{gross75}. For surveys of the extensive literature that developed around this topic, one can look in the wonderful monographs~\cite{aneetal00} and \cite{guionnetzegarlinski03}. 

The connection between logarithmic Sobolev inequalities (and hence hypercontractivity) and concentration of measure was discovered by I.~Herbst in a small unpublished note (see Ledoux \cite{ledoux01}, Theorem 5.3). However, the Herbst argument can only establish ordinary concentration of measure. The first application of hypercontractivity to prove what we call superconcentration was due to Talagrand~\cite{talagrand94}. Talagrand's method was subsequently used by Benjamini, Kalai, and Schramm \cite{bks03} to prove superconcentration in first passage percolation (they call it `sublinear variance'). 

The way we use hypercontractivity in this paper to establish superconcentration is essentially the same in spirit as Talagrand's technique, although there is the difference that while Talagrand's method applies only to functions of independent random variables, we can deal with strong dependence as in the Gaussian free field. 

Recall that in our setting, we have a vector $\bbx = (X_i)_{i\in S}$ that is centered Gaussian but not necessarily with independent components. We can still define a semigroup $(\tilde{P}_t)_{t\ge 0}$ as
\[
\tilde{P}_t f(\bx) := \ee f(\est \bx + \esst \bbx).
\]
Note that this semigroup retains the same hypercontractive property \eqref{hyperbd} as the standard O-U semigroup, with exactly the same relation between $p$ and $q$. This can be argued as follows. In our notation, $R = (R(i,j))_{i,j\in S}$ is the covariance matrix of $\bbx$. Assuming that the vector $\bbx$ is not identically equal to zero, we know that for some $d\le |S|$, there is a $|S|\times d$ matrix $B$ of full rank such that $BB^T = R$. We can assume that there is a $d$-dimensional  standard Gaussian vector $\bbz$ such that $\bbx = B\bbz$. Let $\bbz'$ be an independent copy of $\bbz$, and let $\bbx' = B\bbz'$. Given $f:\rr^S \ra \rr$, define the function $g:\rr^d\ra \rr$ as 
\[
g(\bx) := f(B\bx).
\]
The random vector $\bbx$ is supported on the image of $\rr^d$ under the map $B$. On this subspace the map $B$ has an inverse, which we denote by $B^{-1}$. Then we have 
\begin{align*}
\tilde{P}_t f(\bx) &= \ee f(\est \bx + \esst \bbx) \\
&= \ee f(\est BB^{-1} \bx + \esst B \bbz)\\
&= \ee g(\est B^{-1} \bx + \esst \bbz) = P_t g(B^{-1} \bx). 
\end{align*}
Therefore, we have that for every $t\ge 0$,
\[
\tilde{P}_t f(\bbx) = P_t g(B^{-1}\bbx) = P_tg(\bbz),
\] 
and so by \eqref{hyperbd}, 
\[
\|\tilde{P}_t f(\bbx)\|_q = \|P_t g(\bbz)\|_q \le \|g(\bbz)\|_p = \|g(\bbx)\|_p.
\]
Thus, the hypercontractive property \eqref{hyperbd} holds for the semigroup $(\tilde{P}_t)_{t\ge 0}$ with the same $q,p$. The following lemma, which is of crucial importance to us, is a direct consequence. Recall all notation from Section \ref{intro}, including the definition of $\bbx^t$.
\begin{lmm}\label{hyper}
For any measurable $f:\rr^{S}\ra [0,1]$ and any $t\ge 0$, we have
\[
\ee(f(\bbx)f(\bbx^t))\le (\ee f(\bbx))^{1+\tanh(t/2)}.
\]
\end{lmm}
\begin{proof}
Take any $p> 1$ and let $q = 1+ e^{2t}(p-1)$. Let $q' = q/(q-1)$. Since $f$ maps into $[0,1]$, we have
\begin{align*}
\ee(f(\bbx) f(\bbx^t)) =  \ee(f(\bbx) \tilde{P}_t f(\bbx))  &\le \|f(\bbx)\|_{q'} \|\tilde{P}_tf(\bbx)\|_{q}\\
&\le \|f(\bbx)\|_{q'} \|f(\bbx)\|_{p}\\
&\le \bigl(\ee f(\bbx)\bigr)^{\frac{1}{q'} + \frac{1}{p}}.
\end{align*}
We now optimize over $p$, which yields
\[
p = 1 + e^{-t}.
\]
An easy computation gives
\begin{align*}
\frac{1}{q'} + \frac{1}{p} &= 1 - \frac{1}{1+e^{2t}(p-1)} + \frac{1}{p}\\
&= 1 + \tanh(t/2).
\end{align*}
This completes the proof.
\end{proof}

Lemma \ref{hyper} is a very useful tool, as we will see in our proof of superconcentration in directed polymers in Section~\ref{polymer} and in the Kauffman-Levin $NK$ model in Section \ref{nk}. It is particularly potent in combination with Lemma~\ref{intparts} or Theorem \ref{varmax}. To demonstrate an immediate application, let us work out the following simple result that we do not know how to prove without using hypercontractivity. It says that the variance of the maximum is small whenever the correlations are uniformly small. Recall that $v = \var(\max_i X_i)$ and $R(i,j) = \cov(X_i,X_j)$. 
\begin{prop}\label{corrbd}
Suppose that $R(i,j)\le \rho$ for each $i\ne j$ and $R(i,i) = \sigma^2$ for each $i$. Then 
\[
v \le \frac{C\sigma^2}{\log|S|} + C\rho,
\]
where $C$ is a universal constant. 
\end{prop}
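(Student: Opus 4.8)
The plan is to turn the exact identity $v=\ee R(I^0,I^\tau)$ of Lemma~\ref{varmax} into an integral and feed it into the hypercontractivity bound of Lemma~\ref{hyper}. Conditioning on the exponential variable $\tau$ gives
\[
v=\int_0^\infty e^{-t}\,\ee R(I^0,I^t)\,dt ,
\]
and since $R(i,i)=\sigma^2$ while $R(i,j)\le\rho$ for $i\ne j$ (we may assume $0\le\rho\le\sigma^2$, the second bound being automatic by Cauchy--Schwarz), we get $\ee R(I^0,I^t)\le \sigma^2\,\pp(I^0=I^t)+\rho$. Hence
\[
v\le\rho+\sigma^2\int_0^\infty e^{-t}\,\pp(I^0=I^t)\,dt,
\]
so the whole problem reduces to controlling the coincidence probability $\pp(I^0=I^t)$.

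Write $f_i(\bx):=1_{\{x_i\ge x_j\ \forall j\}}$ and $p_i:=\ee f_i(\bbx)=\pp(I^0=i)$, so $\sum_i p_i=1$ by \eqref{nondeg}. Since $1_{\{I^0=i\}}=f_i(\bbx)$ and $1_{\{I^t=i\}}=f_i(\bbx^t)$, Lemma~\ref{hyper} applied to each $f_i$ gives $\ee\bigl(f_i(\bbx)f_i(\bbx^t)\bigr)\le p_i^{\,1+\tanh(t/2)}$, hence $\pp(I^0=I^t)=\sum_i\ee\bigl(f_i(\bbx)f_i(\bbx^t)\bigr)\le\sum_i p_i^{\,1+\tanh(t/2)}$. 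An elementary estimate (using $\tanh(t/2)\ge t/3$ for $t\le1$ and $\tanh(t/2)\ge\tanh(1/2)>1/3$ for $t\ge1$) shows that $\int_0^\infty e^{-t}p^{\,\tanh(t/2)}\,dt\le C/(1+\log(1/p))$ for every $p\in(0,1]$. Multiplying the bound on $\pp(I^0=I^t)$ by $e^{-t}$, integrating, and using $\sum_ip_i=1$ together with $p_i\le q:=\max_ip_i$ yields
\[
v\le\rho+C\sigma^2\sum_i\frac{p_i}{1+\log(1/p_i)}\le \rho+\frac{C\sigma^2}{1+\log(1/q)} .
\]

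It remains to show that $q$ is polynomially small in $|S|$. If $\rho\ge\sigma^2/2$ then $v\le\sigma^2\le2\rho$ by Proposition~\ref{varcrude}, and if $|S|$ is below a suitable universal constant then $v\le\sigma^2$ already gives the claim; so assume $\rho<\sigma^2/2$ and $|S|$ large. Then $\ee(X_i-X_j)^2=2\sigma^2-2R(i,j)\ge2\sigma^2-2\rho>\sigma^2$ for all $i\ne j$, so the Sudakov minoration lemma gives $m\ge c\sigma\sqrt{\log|S|}$. For each $i$, using $\{I^0=i\}\subseteq\{X_i\ge m/2\}\cup\{M\le m/2\}$ together with the tail bound \eqref{gausstail} for $X_i/\sigma$ and Proposition~\ref{borell} for $M$,
\[
p_i\le\pp(X_i\ge m/2)+\pp(M\le m/2)\le2e^{-m^2/8\sigma^2}\le2|S|^{-\kappa},
\]
where $\kappa:=c^2/8$ is universal. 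Substituting $\log(1/q)\ge\kappa\log|S|-\log2$ into the bound for $v$ and absorbing constants finishes the proof.

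The main obstacle is this last step, the bound $\max_ip_i\le|S|^{-\kappa}$: without it the displayed estimate for $v$ is vacuous, since $\sum_i p_i^{\,1+\tanh(t/2)}$ can be as large as $1$ (e.g.\ if one coordinate dominates). A second, subtler point worth flagging is that one must keep the weight $e^{-t}$ \emph{inside} the $t$-integral: applying instead the cruder single-$t$ bound $v\le\sigma^2(1-e^{-t})+e^{-t}\ee R(I^0,I^t)$ of Theorem~\ref{varconv} and optimizing over $t$ loses a spurious factor of $\log\log|S|$.
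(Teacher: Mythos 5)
Your proof is correct and follows essentially the same route as the paper's: use the integral representation $v=\int_0^\infty e^{-t}\ee R(I^0,I^t)\,dt$ together with the hypercontractivity bound of Lemma~\ref{hyper} applied to the indicators $f_i$, then control $\max_i\pp(I^0=i)$ via Sudakov minoration (after reducing to $\rho<\sigma^2/2$) and the Gaussian tail bounds. The only differences are cosmetic: the paper factors out $(\max_i p_i)^{\tanh(t/2)}$ from the sum before integrating in $t$ (using $\tanh(t/2)\ge 1-e^{-t/2}$), whereas you integrate term by term and then maximize, arriving at the same $C/\log|S|$ contribution.
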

\noindent{\it Remark.} It can be easily shown that the bound is sharp by considering the case where $R(i,j) = \rho$ for each $i\ne j$. However, in spite of the sharpness, the result is probably not very useful since the uniform boundedness is a very strong restriction. It is presented here only for illustration purposes.
\begin{proof}
For any vector $\bx \in \rr^S$ all of whose coordinates have distinct values, define  
\[
f_i(\bx) := \ii_{\{x_i = \max_j x_j\}}.
\]
Then by Lemma \ref{hyper} and the hypothesis of this theorem, we have
\begin{align*}
\ee(R(I^0,I^t)) &\le \sigma^2\pp(I^0 = I^t) + \rho \pp(I^0\ne I^t)\\
&= \sigma^2\sum_{i\in S}  \ee(f_i(\bbx^0) f_i(\bbx^t)) + \rho\pp(I^0\ne I^t)\\
&\le \sigma^2 \sum_{i\in S} (\ee f_i(\bbx^0))^{1+\tanh(t/2)} + \rho\\
&\le \sigma^2 (\max_{i\in S} \pp(I^0 = i))^{\tanh(t/2)} \sum_{i\in S} \pp(I^0 = i) + \rho \\
&= \sigma^2 (\max_{i\in S} \pp(I^0 = i))^{\tanh(t/2)}  + \rho. 
\end{align*}
Now, by increasing the value of the constant in the statement of the theorem (and recalling Proposition \ref{varcrude}), we can assume without loss of generality that $\rho \le \sigma^2/2$. Under this assumption, by the Sudakov minoration technique stated in Section \ref{basicfacts}, we have
\[
m \ge C\sigma \sqrt{\log |S|},
\]
where we are using our convention of letting $C$ denote any universal constant. Thus, by Proposition \ref{borell} and the tail bound \eqref{gausstail} we have
\begin{align*}
\pp(I^0 = i) &\le \pp(X_i \ge m/2) + \pp(M \le m/2)\\
&\le 2e^{-m^2/8\sigma^2}\le |S|^{-C}. 
\end{align*}
Now, for each $t\ge 0$,
\begin{equation}\label{tanhbd}
\tanh(t/2) = \frac{e^{t/2} - e^{-t/2}}{e^{t/2} + e^{-t/2}} = \frac{(1-e^{-t/2})(1+e^{-t/2})}{1+e^{-t}} \ge 1-e^{-t/2}.
\end{equation}
Combining this with the bounds on $\ee(R(I^0,I^t))$ and $\pp(I^0 =i)$ obtained above and Theorem \ref{varmax}, we get
\begin{align*}
v &= \int_0^\infty e^{-t} \ee(R(I^0,I^t)) dt\\
&\le \int_0^\infty e^{-t} \bigl(\sigma^2|S|^{-C(1-e^{-t/2})} + \rho\bigr) dt\\
&\le \int_0^\infty e^{-t/2} \sigma^2|S|^{-C(1-e^{-t/2})} dt + \rho\\
&= 2\sigma^2 \int_0^1 |S|^{-Cu} du + \rho \le \frac{C\sigma^2}{\log |S|} + \rho. 
\end{align*}
This completes the proof. 
\end{proof}
A part of the proof of Proposition \ref{corrbd} generalizes to the following technique, that will be help us bound the variance of the maximum of a discrete Gaussian free field in Section \ref{dgff}. It will also be used for analyzing continuous Gaussian fields on Euclidean spaces in Section \ref{euclidean}.
\begin{thm}\label{hyper2}
Suppose that for each $r\ge 0$, there is a covering $\mc(r)$ of $S$ such that whenever $i,j$ are indices with $R(i,j)\ge r$, we have $i,j\in D$ for some $D\in \mc(r)$.
For each $A\subseteq S$ let $p(A) := \pp(I^0\in A)$, and define 
\[
\rho(r) := \max_{D\in \mc(r)} p(D), \ \ \ \mu(r) := \sum_{D\in \mc(r)} p(D) = \ee\bigl|\{D\in \mc(r): I^0\in D\}\bigr|.
\] 
Then we have
\[
v\le \int_0^{\sigma^2} \frac{2\mu(r)(1-\rho(r))}{-\log \rho(r)}\; dr,
\]
where we interpret $(1-x)/\log x = -1$ when $x=1$. 
\end{thm}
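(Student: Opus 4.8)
The plan is to feed the hypercontractive estimate of Lemma~\ref{hyper} into the exact variance formula $v=\int_0^\infty e^{-t}\,\ee(R(I^0,I^t))\,dt$ (a restatement of Lemma~\ref{varmax} obtained by conditioning on $\tau$), after decomposing $R(I^0,I^t)$ along the levels $r$ of the coverings $\mc(r)$. First I would observe that $R(I^0,I^t)\le\sigma^2$ always (Cauchy--Schwarz), so that $\{R(I^0,I^t)\ge r\}=\varnothing$ for $r>\sigma^2$ and the layer-cake identity gives
\[
\ee\bigl(R(I^0,I^t)\bigr)\le\ee\bigl(R(I^0,I^t)_+\bigr)=\int_0^{\sigma^2}\pp\bigl(R(I^0,I^t)\ge r\bigr)\,dr .
\]
Fix a level $r$. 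By the covering hypothesis, on the event $\{R(I^0,I^t)\ge r\}$ the two random sites $I^0$ and $I^t$ lie in a common $D\in\mc(r)$, so a union bound gives $\pp(R(I^0,I^t)\ge r)\le\sum_{D\in\mc(r)}\pp(I^0\in D,\,I^t\in D)$. For each $D$ the function $f_D(\bx):=\ii_{\{\argmax_i x_i\in D\}}$ is $[0,1]$-valued with $\ee f_D(\bbx)=p(D)$ and $\ee(f_D(\bbx)f_D(\bbx^t))=\pp(I^0\in D,\,I^t\in D)$, so Lemma~\ref{hyper} bounds the latter by $p(D)^{1+\tanh(t/2)}$. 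Since $0\le p(D)\le\rho(r)\le 1$ and $\tanh(t/2)\ge 0$, I would factor $p(D)^{1+\tanh(t/2)}=p(D)\,p(D)^{\tanh(t/2)}\le p(D)\,\rho(r)^{\tanh(t/2)}$ and sum over $D$, obtaining $\pp(R(I^0,I^t)\ge r)\le\mu(r)\,\rho(r)^{\tanh(t/2)}$.

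Plugging this into the formula for $v$ and interchanging the two integrations by Tonelli (every integrand is nonnegative), the problem reduces to the one-dimensional estimate
\[
\int_0^\infty e^{-t}\,\rho^{\tanh(t/2)}\,dt\ \le\ \frac{2(1-\rho)}{-\log\rho}\qquad(0\le\rho\le 1),
\]
since then $v\le\int_0^{\sigma^2}\mu(r)\cdot\frac{2(1-\rho(r))}{-\log\rho(r)}\,dr$, which is the claim. To prove the one-dimensional inequality I would substitute $s=\tanh(t/2)$, so that $e^{-t}=\frac{1-s}{1+s}$ and $dt=\frac{2}{1-s^2}\,ds$, which turns the left-hand side into $2\int_0^1\frac{\rho^s}{(1+s)^2}\,ds$; bounding $(1+s)^{-2}\le 1$ on $[0,1]$ and integrating $\rho^s$ then gives exactly $\frac{2(1-\rho)}{-\log\rho}$. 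The stated conventions handle the boundary cases: at $\rho(r)=1$ the right side is read as $2\mu(r)$, and $\rho(r)=0$ forces $\mu(r)=0$.

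I do not expect a serious obstacle. Everything rests on the single nontrivial input, Lemma~\ref{hyper}, applied to the indicator functions $f_D$, exactly as in the proof of Proposition~\ref{corrbd}; the only points needing a little care are (i) getting the layer-cake step right, with upper limit $\sigma^2$ and the positive-part truncation, (ii) the union bound over $\mc(r)$, which is immediate from the defining property of the covering, and (iii) the clean identity $\int_0^\infty e^{-t}\rho^{\tanh(t/2)}\,dt=2\int_0^1\rho^s(1+s)^{-2}\,ds$, which is what makes the final constant come out as claimed. An alternative to (iii) is to use $\tanh(t/2)\ge 1-e^{-t/2}$ (inequality~\eqref{tanhbd}) together with $1+x\le e^x$, but the substitution $s=\tanh(t/2)$ is shorter and delivers the stated constant directly.
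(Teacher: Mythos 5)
Your proof is correct and follows essentially the same route as the paper: Lemma~\ref{hyper} applied to the indicators $f_D$, the union bound over $\mc(r)$, the layer-cake/Tonelli interchange, and then the one-dimensional integral $\int_0^\infty e^{-t}\rho^{\tanh(t/2)}\,dt\le\frac{2(1-\rho)}{-\log\rho}$. The only (cosmetic) difference is in that last integral, where you substitute $s=\tanh(t/2)$ and bound $(1+s)^{-2}\le 1$, whereas the paper uses the inequality $\tanh(t/2)\ge 1-e^{-t/2}$ from~\eqref{tanhbd} together with $e^{-t}\le e^{-t/2}$; both give the same constant.
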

\begin{proof}
By Lemma \ref{hyper}, we have
\begin{align*}
\pp(R(I^0, I^t) \ge r) &\le \sum_{D\in \mc(r)} \pp(I^0\in D, \ I^t\in D)\\
&\le \sum_{D\in\mc(r)}p(D)^{1+\tanh(t/2)}\\
&\le \rho(r)^{\tanh(t/2)}\sum_{D\in \mc(r)} p(D) = \mu(r)\rho(r)^{\tanh(t/2)}.
\end{align*}
Thus, by Lemma \ref{varmax} we have
\begin{align*}
v &= \int_0^\infty e^{-t} \ee(R(I^0,I^t))\; dt\\
&\le \int_0^\infty \int_0^{\sigma^2} e^{-t} \pp(R(I^0,I^t)\ge r) \; dr\; dt\\
&\le \int_0^{\sigma^2} \int_0^\infty e^{-t} \mu(r) \rho(r)^{\tanh(t/2)}\; dt\; dr.
\end{align*} 
Now, by \eqref{tanhbd} we have that for any fixed $r$,
\begin{align*}
\int_0^\infty e^{-t} \rho(r)^{\tanh(t/2)}\; dt &\le \int_0^\infty e^{-t} \rho(r)^{1-e^{-t/2}}\; dt\\
&\le \int_0^\infty e^{-t/2} \rho(r)^{1-e^{-t/2}}\; dt\\
&= \int_0^1 2\rho(r)^u\; du = \frac{2(1-\rho(r))}{-\log \rho(r)}.
\end{align*}
This completes the proof.
\end{proof}

\section{Extremal fields}\label{proof:main2}
The goal of this section is to prove Theorems \ref{main2} and \ref{gencorr}. As usual, we prove quantitative versions. Recall the definitions of $\bbx, \bbx^t, M, m, v, \sigma^2, I^t,$ and $R(i,j)$ from Section \ref{intro}. The following theorem is the main result of this section.
\begin{thm}\label{extreme}
Let
\[
\alpha := \frac{m}{\sqrt{2\sigma^2\log |S|}}, \ \ \beta := \sqrt{1-\alpha} + \biggl(\frac{\log \log |S|}{ \log |S|}\biggr)^{1/4}.
\] 
Then $v\le C\sigma^2 \beta$ and for any $t\ge 0$, $\ee(R(I^0,I^t)) \le \frac{C\sigma^2\beta}{\esst}$, where $C$ is a universal constant.
\end{thm}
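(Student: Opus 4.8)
The plan is to use the hypercontractivity tool (Lemma~\ref{hyper}) together with the exact variance formula (Lemma~\ref{varmax}) in the same spirit as the proof of Proposition~\ref{corrbd}, but now the obstacle is that the correlations need not be uniformly small, so I cannot simply bound $R(I^0,I^t)$ by $\sigma^2$ on $\{I^0=I^t\}$ and by a small $\rho$ elsewhere. Instead I would decompose the expectation dyadically in the level of $R(I^0,I^t)$: for $r\in[0,\sigma^2]$, write $\ee(R(I^0,I^t))\le \int_0^{\sigma^2}\pp(R(I^0,I^t)\ge r)\,dr$, and for each level $r$ control $\pp(R(I^0,I^t)\ge r)$ by splitting over the pairs $(i,j)$ with $R(i,j)\ge r$. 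By Cauchy--Schwarz, $\pp(I^0=i,\ I^t=j)\le (\ee f_i(\bbx^0)f_i(\bbx^t))^{1/2}(\ee f_j(\bbx^0)f_j(\bbx^t))^{1/2}$, and Lemma~\ref{hyper} bounds each factor by $\pp(I^0=i)^{(1+\tanh(t/2))/2}$, reducing the task to estimating $\sum_i \pp(I^0=i)^{\theta}$ for $\theta$ slightly bigger than $1$.

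The key new input compared with Proposition~\ref{corrbd} is that I no longer have a Sudakov lower bound on $m$; instead I have the hypothesis that $\alpha = m/\sqrt{2\sigma^2\log|S|}$ is close to $1$. The next step is therefore to turn extremality into a bound on $\max_i \pp(I^0=i)$, or more precisely on $\sum_i \pp(I^0=i)^{\theta}$. Here I would use Proposition~\ref{borell}: if $\pp(X_i\ge M-s)$ is not tiny then $m$ cannot be much below $\ee X_i$ after accounting for the fluctuation scale, so a union-bound/entropy argument (in the style of Lemma~\ref{max}) should show that the bulk of the probability mass of $I^0$ sits on a set of size $|S|^{1-c(1-\alpha)-o(1)}$, where the $o(1)$ comes from the $(\log\log|S|/\log|S|)^{1/4}$ slack. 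Combining, $\sum_i\pp(I^0=i)^{1+\tanh(t/2)}\le (\text{size of effective support})^{-\tanh(t/2)}\le |S|^{-c\tanh(t/2)(1-\alpha)-\cdots}$, which is small once $t$ is bounded away from $0$ on the scale $\beta$.

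Finally I would feed the resulting bound $\pp(R(I^0,I^t)\ge r)\le (\text{small})\,r^{-1}\cdot(\cdots)$ back into $v=\int_0^\infty e^{-t}\ee(R(I^0,I^t))\,dt$, using the elementary inequality $\tanh(t/2)\ge 1-e^{-t/2}$ from \eqref{tanhbd} and the change of variables $u=1-e^{-t/2}$ exactly as in Proposition~\ref{corrbd}, to extract a clean bound of the form $v\le C\sigma^2\beta$; the bound on $\ee(R(I^0,I^t))$ for fixed $t$ then follows either directly from the level-set estimate or, more cheaply, from Theorem~\ref{varconv} applied to the variance bound, which produces the $1/\esst$ factor. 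I expect the main obstacle to be the second step: quantifying precisely how extremality ($\alpha\to1$) forces the argmax distribution to be spread out, and getting the constants/exponents to line up so that the $\sqrt{1-\alpha}$ term (rather than $1-\alpha$ or $(1-\alpha)^{1/4}$) is what appears in $\beta$ — this requires carefully balancing the Gaussian tail bound against the entropy of the level sets, and is where a naive union bound would lose too much.
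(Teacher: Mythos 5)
Your proposal takes a genuinely different route from the paper, and it contains a gap that I do not see how to close.

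The paper's proof of Theorem~\ref{extreme} does not use hypercontractivity at all. It works by a direct conditional argument: first Lemma~\ref{prediction} shows that $X_{I^t}\approx\est m$ with high probability, so $I^t$ lies in the near-level set $D=\{i:|X_i-\est m|\le\delta m\}$; a first-moment (Mills ratio) bound controls $|D|\lesssim n^\zeta$; then, and this is the key step, one conditions on $\bbx$ and observes that for $i$ in the set $B=\{i:R(I^0,i)\ge r\}$ the conditional variance of $X'_i - rX'_{I^0}$ is at most $1-r^2$. Applying the worst-case bound of Lemma~\ref{max} to this conditionally small-variance field over $B\cap D$, together with Proposition~\ref{borell}, shows that $\max_{i\in B\cap D}X^t_i$ cannot reach the level $\approx m$ unless $r$ is at least of order $\sqrt{1-\alpha}$. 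So $I^t\notin B$ with high probability, i.e.\ $R(I^0,I^t)<r$. Nothing in this argument requires estimating the argmax distribution $\pp(I^0=i)$.

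Your hypercontractivity-plus-Cauchy--Schwarz reduction is in itself fine: the self-adjointness of $\tilde P_t$ gives $\pp(I^0=i,I^t=j)\le\sqrt{\pp(I^0=i,I^t=i)\,\pp(I^0=j,I^t=j)}\le(p_ip_j)^{(1+\tanh(t/2))/2}$ with $p_i:=\pp(I^0=i)$. But this reduces $\pp(R(I^0,I^t)\ge r)$ to $\sum_{(i,j):R(i,j)\ge r}(p_ip_j)^{(1+\tanh(t/2))/2}$, which is not the diagonal quantity $\sum_ip_i^{1+\tanh(t/2)}$ that you describe in your step~2. The off-diagonal count matters: even with $p_i\approx|S|^{-1}$ for all $i$, if a positive fraction of the $|S|^2$ pairs satisfy $R(i,j)\ge r$, your bound becomes $\gtrsim|S|^{1-\tanh(t/2)}$, which is useless for any fixed $t$. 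Extremality does implicitly constrain the correlation structure so that this can't happen, but your proposal does not say how it does so, and turning extremality into the needed count bound on $\{(i,j):R(i,j)\ge r\}$ (weighted by $p_ip_j$) is essentially a new problem. Note also that the paper deliberately proves Theorem~\ref{gencorr} (which looks like the kind of ``most correlations are small'' estimate you would need) as a \emph{corollary} of Theorem~\ref{extreme} via Paley--Zygmund, not as an input to it; so the direction of implication you need is not available. In short, the conditional Sudakov/Borell argument is the missing ingredient, and without it the ``spread-out argmax'' heuristic does not close the proof.
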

\noindent It is clear how Theorem~\ref{main2} follows from this result. However, Theorem~\ref{gencorr} still needs a proof. Let us prove it before moving on to prove Theorem \ref{extreme}. As usual $C$ will denote any universal constant, whose value may change from line to line.
\begin{proof}[Proof of Theorem~\ref{gencorr}]
Without loss of generality, assume that $\sigma^2 = 1$. Let $n = |S|$, and define
\[
N := |\{i: X_i \ge  \sqrt{2\log n}\}|.
\]
Then by the Mills ratio lower bound from Section \ref{basicfacts} and the assumption that $R(i,i)=1$ for all $i$,  we have
\begin{align*}
\ee(N) &\ge Cn \frac{e^{-\log n}}{\sqrt{2\log n}} = \frac{C}{\sqrt{2\log n}}.
\end{align*}
Again, we have
\begin{align*}
\ee(N^2) &= \sum_{i,j\in S}\pp\bigl(X_i \ge \sqrt{2\log n}, \ X_j \ge \sqrt{2\log n}\bigr) \\
&\le \sum_{i,j\in S}\pp\bigl(X_i + X_j \ge 2\sqrt{2\log n}\bigr)\\
&\le  \sum_{i,j\in S}\exp\biggl(-\frac{4\log n}{\var(X_i + X_j)}\biggr)\\
&\le  \sum_{i,j\in S}\exp\biggl(-\frac{2\log n}{1+R(i,j)}\biggr).
\end{align*}
Applying the Paley-Zygmund second moment method, we get
\[
\pp\bigl(M \ge \sqrt{2\log n}\bigr) = \pp(N>0) \ge \frac{(\ee(N))^2}{\ee(N^2)} \ge \frac{C}{\sum_{i,j\in S} n^{-2/(1+R(i,j))} \log n}.
\]
Again, by Proposition \ref{borell} we know that for any $x\ge 0$,
\[
\pp(M - \ee(M)\ge x) \le e^{-x^2/2}.
\]
Combining, we see that
\[
\ee(M) \ge \sqrt{2\log n} - C\biggl(\log \log n + \log \sum_{i,j\in S}n^{-2/(1+R(i,j))}\biggr)^{1/2}.
\]
The proof now follows from Theorem \ref{extreme}.
\end{proof}

Let us now give a brief idea of the proof of Theorem \ref{extreme} before going into the technical details. The purpose of this sketch is to succinctly convey an idea that may have the potential to grow as an alternative to the hypercontractive method for proving superconcentration. We begin by assuming that $\sigma^2 = 1$. The first step is to show that for any $t\ge 0$, $X_{I^t} \approx e^{-t} m$ with high probability. We carry out this important step in Lemma~\ref{prediction}. Next, we fix $t$ and let $D = \{i: X_i \approx \est m\}$, where the meaning of $\approx$ has to be made precise. A simple first moment bound shows that with high probability, $|D|\lesssim n^{1 - e^{-2t}}$. Next we fix some $r\ge 0$ and let $B=\{i: R(I_0,i)\ge r\}$. The key observation is that since $\bbx'$ and $\bbx$ are independent and $\sigma^2 = 1$, we have $\var(X'_i - rX'_{I^0}\mid \bbx) \le 1-r^2$ for each $i\in B$, and hence 
\[
\ee\bigl(\max_{i\in B\cap D} X_i'\mid \bbx\bigr) = \ee\bigl(\max_{i\in B\cap D} (X_i' - rX'_{I^0})\mid \bbx\bigr) \lesssim \sqrt{(1-r^2)2\log |D|}.
\]
Combining this with the bound $|D|\lesssim n^{1-e^{-2t}}$, we get
\begin{align*}
\max_{i\in B\cap D} X^t_i &\le \est \max_{i\in B\cap D} X_i + \esst \max_{i\in B\cap D} X_i'\\
&\lesssim e^{-2t} \alpha \sqrt{2\log n} + (1-e^{-2t}) \sqrt{2(1-r^2)\log n}.
\end{align*}
Thus, if $\sqrt{1-r^2} < \alpha$, or in other words $r > \sqrt{1-\alpha^2}$, we cannot have that $\max_{i\in B\cap D} X_i^t \approx \alpha \sqrt{2\log n}$. If this does not happen, then $I^t\not \in B\cap D$. But we have already stated that $I^t \in D$ with high probability. Therefore, whenever $r > \sqrt{1-\alpha^2}$, we have that $I^t \in D\backslash B$ with high probability, which implies that $R(I^0,I^t) < r$ with high probability. Roughly, this justifies the $\sqrt{1-\alpha}$ term in the statement of the theorem. The second term arises from our attempts at making the above sketch rigorous, which is a somewhat technically involved  task. 

Let us now begin the formal proof. The first step, as mentioned before, is to show that $X_{I^t} \approx e^{-t} m$. This is made precise in the following lemma.
\begin{lmm}\label{prediction}
Take any $t \ge 0$. Then for any $x \ge 0$, we have
\[
\pp\bigl(|X_{I^t} - \est m|\ge x\bigr) \le 4e^{-x^2/4\sigma^2}.
\]
\end{lmm}
\begin{proof}
For notational convenience, let $a=\est$, $b = \esst$,  $\bbz := \bbx^t$, and 
$\bbw := b\bbx - a\bbx'$.
Then $\bbz$ and $\bbw$ are independent, and
\begin{equation}\label{equ0}
\bbx = a\bbz + b\bbw.
\end{equation}
Since $a+b\le \sqrt{2}$, by Proposition \ref{borell} and the independence of $\bbz$ and $\bbw$, we have
\begin{align*}
\pp(|X_{I^t} - am| \ge x) &\le \pp(a|Z_{I^t} - m| \ge ax/\sqrt{2}) + \pp(b|W_{I^t}| \ge bx/\sqrt{2})\\
&\le 2e^{-x^2/4\sigma^2} + 2e^{-x^2/4\sigma^2}.
\end{align*}
This completes the proof.
\end{proof}

\begin{proof}[Proof of Theorem \ref{extreme}]
Without loss of generality, we assume that $\sigma^2 = 1$. 
As in Lemma~\ref{prediction}, let $a= \est$, $b = \esst$, and $\bbz = \bbx^t$. Let $n = |S|$. 
Note that since $|R(i,j)|\le 1$ for all $i,j$ and $C$ can be chosen as large as we like, it suffices to prove the theorem assuming that $n$ is larger than some fixed threshold and that
\begin{equation}\label{b}
\frac{1}{b}\biggl(\frac{\log \log n}{\alpha^2\log n}\biggr)^{1/4} \le \frac{1}{100}.
\end{equation}
For the same reason, there is no loss of generality in assuming that $m\ge 2$. With that assumption, define
\begin{equation}\label{ep}
\delta:= \frac{\sqrt{\log (m^4/2)}}{m} \in (0,1),
\end{equation}
and let
\[
D := \{i: |X_i - am|\le \delta m\}.
\]
By Lemma \ref{prediction},
\begin{equation}\label{equu1}
\pp(I^t \not \in D) \le 4e^{-\delta^2 m^2 /4}.
\end{equation}
Now, if $a\ge \delta$, then by the Mills ratio upper bound from Section \ref{basicfacts}, 
\begin{align*}
\ee|D| &\le \sum_i\pp(X_i \ge (a-\delta)m)\\
&\le n e^{-(a-\delta)^2m^2/2} = n^{1-(a-\delta)^2\alpha^2}.
\end{align*}
Therefore, if we define
\[
\zeta := 
\begin{cases}
1- (a-\delta)^2 \alpha^2 + \delta^2 & \text{ if } a \ge \delta,\\
1 &\text{ if } a<\delta,
\end{cases}
\]
then in all cases we have
\begin{equation}\label{markov}
\pp(|D| > n^\zeta)\le n^{-\delta^2}.
\end{equation}
Define
\[
\gamma := \frac{\alpha(b^2 - 2\delta)}{b\sqrt{\zeta}}.
\]
It is easy to verify using \eqref{b} that $b^2 \ge 2\delta$ and hence $\gamma\ge 0$. Again,
\[
\frac{\alpha(b^2 - 2\delta)}{b\sqrt{\zeta}}\le \frac{b^2 - 2\delta}{b\sqrt{1-a^2}}  = \frac{b^2 - 2\delta}{b^2}\le 1.
\]
Thus, $\gamma \in [0,1]$. Let $r := \sqrt{1-\gamma^2}$, and define the random set
\[
B := \{i: R(I^0,i)\ge r\}.
\]
Note that
\begin{equation}\label{equu2}
\pp(R(I^0,I^t)\ge r) = \pp(I^t\in B) \le \pp(I^t\in B\cap D) + \pp(I^t \not \in D).
\end{equation}
Let $\ee^0$ and $\var^0$ denote the conditional expectation and variance given $\bbx$. Since $R(i,i)\le 1$ and $R(I^0,i) \ge r$ for all $i\in B$ and $\bbx'$ is independent of $\bbx$, we have
\[
\var^0(X'_i - rX'_{I^0}) \le 1+r^2 - 2r^2 = \gamma^2.
\]
Again, $\ee^0(X'_{I^0}) = 0$. Thus, if $B\cap D \ne \emptyset$, then by Lemma \ref{max} we have
\begin{align*}
\ee^0(\max_{i\in B\cap D} X'_i) &= \ee^0(\max_{i\in B\cap D}(X'_i - rX'_{I^0})) \\
&\le \gamma\sqrt{2\log |B\cap D|}\le \gamma\sqrt{2\log |D|}.
\end{align*}
Combined with Proposition \ref{borell}, this implies that if $B\cap D\ne \emptyset$, then for all $x\ge 0$,
\begin{equation}\label{ppp}
\pp^0(\max_{i\in B\cap D} X'_i \ge \gamma \sqrt{2\log |D|} + x) \le e^{-x^2/2}.
\end{equation}
Clearly, the inequality holds even if we relax the condition $B\cap D \ne \emptyset$ to just $D\ne \emptyset$, interpreting the maximum of an empty set as $-\infty$. Since $X_i \le (a+\delta)m$ for $i\in D$ and $\bbz = a\bbx + b \bbx'$, we have
\begin{align*}
\max_{i\in B\cap D} Z_i &\le a\max_{i\in B\cap D} X_i + b\max_{i\in B\cap D} X'_i\\
&\le a(a+\delta) m + b\max_{i\in B\cap D} X'_i.
\end{align*}
Thus, from \eqref{ppp}, we see that whenever $ D \ne \emptyset$, 
\begin{equation}\label{pppp}
\pp^0(\max_{i\in B\cap D} Z_i \ge a(a+\delta) m + b\gamma\sqrt{ 2\log |D|} + bx) \le e^{-x^2/2}.
\end{equation}
Putting
\begin{align*}
m' &:= a(a+\delta) m + b\gamma\sqrt{ 2\zeta \log n}\\
&= \biggl(a(a+\delta)  + \frac{b\gamma\sqrt{\zeta}}{\alpha} \biggr)m\\
&= (a^2 + a\delta + b^2 - 2\delta) m = (1+a\delta -2\delta) m,
\end{align*}
and using \eqref{pppp} and \eqref{markov} we get
\begin{equation}\label{bc}
\begin{split}
\pp(\max_{i\in B \cap D} Z_i \ge m' + bx) &\le \ee(\pp^0(\max_{i\in B \cap D} Z_i \ge m' + bx); 1\le |D|\le n^\zeta) \\
&\quad + \ee(\pp^0(\max_{i\in B \cap D} Z_i \ge m' + bx); |D|> n^\zeta) \\
&\le e^{-x^2/2} + n^{-\delta^2}.
\end{split}
\end{equation}
Now
\begin{equation}\label{mm}
\begin{split}
m - m' &= (2-a)\delta m \ge \delta m.
\end{split}
\end{equation}
In particular, $m'\le m$. Let $x := (m-m')/2b$. Then by \eqref{bc} and Proposition \ref{borell} we have
\begin{equation}\label{imp}
\begin{split}
\pp(I^t \in B\cap D) &\le \pp(\max_{i\in B\cap D} Z_i \ge m' + bx) + \pp(\max_{i\in S} Z_i \le m- bx)\\
&\le e^{-x^2/2} + n^{-\delta^2} + e^{-b^2x^2/2}\\
&\le 2e^{-(m-m')^2/8} + n^{-\delta^2}.
\end{split}
\end{equation}
Thus, by \eqref{equu1}, \eqref{equu2}, \eqref{mm}, and \eqref{imp}, we have
\begin{equation}\label{peq}
\pp(R(I^0, I^t) \ge r) \le 6 e^{-\delta^2 m^2/8} + n^{-\delta^2} \le 7e^{-\delta^2m^2/8}.
\end{equation}
Now, if $a \ge \delta$, then 
\begin{align*}
r^2 &= \frac{b^2\zeta - \alpha^2(b^4 - 4b^2\delta + 4\delta^2) }{b^2\zeta}\\
&= \frac{b^2(1-a^2\alpha^2+ 2a\delta\alpha^2 -\delta^2 \alpha^2 + \delta^2) - \alpha^2(b^4 - 4b^2\delta + 4\delta^2) }{b^2\zeta}\\
&\le \frac{(1-\alpha^2) b^2 + 2b^2a\delta\alpha^2 + b^2\delta^2 + 4\alpha^2b^2\delta}{b^4}\\
&\le \frac{1-\alpha^2 + C\delta}{b^2},
\end{align*}
where $C$ denotes a universal constant (whose value may change from line to line). Again, if $a< \delta$, 
\begin{align*}
r^2 &= \frac{b^2 - \alpha^2(b^4 - 4b^2\delta + 4\delta^2)}{b^2}\\
&\le 1-\alpha^2 b^2+ 4\alpha^2\delta\\
&= 1-\alpha^2 + \alpha^2 a^2 + 4\alpha^2\delta\le 1-\alpha^2 + 5\delta.
\end{align*}
Therefore in all cases we have
\begin{align*}
\ee(R(I^0, I^t)) &\le r + \pp(R(I^0,I^t)\ge r)\\
&\le \biggl(\frac{1-\alpha^2 + C\delta}{b^2}\biggr)^{1/2} + Ce^{-\delta^2m^2/8},
\end{align*}
From the definition \eqref{ep} of $\delta$ we have
\[
e^{-\delta^2 m^2 /8} = \frac{2^{1/8}}{m^{1/2}}.
\]
Since $\sqrt{x+y}\le \sqrt{x}+\sqrt{y}$ and $\alpha \le 1$, we get
\begin{align*}
\ee(R(I^0, I^t)) &\le \frac{\sqrt{2(1-\alpha)}}{b} + \frac{C(\log m)^{1/4}}{b m^{1/2}}.
\end{align*}
Now, $m^{1/2} = \alpha^{1/2}(2\log n)^{1/4}$. Since $R(i,j)\le 1$ for all $i,j$, we can put a large enough constant in front of the first term to remove the $\alpha^{1/2}$ from the denominator of the second term. Finally, the bound on $v$ comes from Lemma \ref{varmax} and our bound on $\ee(R(I^0,I^t))$.  This completes the proof.
\end{proof}

\section{Example: Chaotic nature of the first eigenvector}\label{eigen}
A random Hermitian matrix $A = (a_{ij})_{1\le i,j\le n}$ is said to belong to the Gaussian Unitary Ensemble (GUE) if (i) $(a_{ij})_{1\le i\le j\le n}$ are independent random variables, (ii) the diagonal entries are standard real Gaussian random variables, and (iii) $(a_{ij})_{1\le i< j\le n}$ are standard complex Gaussian random variables (i.e.\ real and imaginary parts are i.i.d.\ $N(0,1/2)$). 

Eigenvalues of GUE matrices are among the most widely studied objects in random matrix theory. For a general introduction to  the classical random matrix ensembles and results, we refer to the book by Mehta~\cite{mehta91}. The study of the largest eigenvalue was revolutionized through the work of Tracy and Widom \cite{tracywidom94a, tracywidom94b, tracywidom96}. One of the striking implications of their work is that the largest eigenvalue has variance of order $n^{-1/3}$, beating the $O(1)$ bound given by standard isoperimetric and martingale methods. But the Tracy-Widom result is in the sense of weak convergence, and does not provide an actual bound on the variance that we need. A variance bound of order $n^{-1/3}$  was proved by Ledoux \cite{ledoux03} and independently by Aubrun \cite{aubrun05}.

The eigenvectors of GUE matrices, taken as rows (or columns) of a matrix, give rise to a Haar-distributed unitary matrix. In that sense, they are quite well-understood. However, the behavior of the eigenvectors under perturbations of the matrix has not been studied. Such questions arise, for instance, in the study of chaos in the spherical Sherrington-Kirkpatrick model of spin glasses. For the definition of this model and further references, let us refer to the recent paper of Talagrand and Panchenko \cite{panchenkotalagrand07}, where it was proved that the model is chaotic with respect to an external field. The goal of this section is to show that the first eigenvector is unstable under small perturbations of the matrix, and give a quantitative version of this statement. In the spherical SK model with complex spins, this establishes chaos with respect to the disorder at zero temperature.

Let us now formulate the question in terms of Gaussian fields. For each vector $\bu$ in the unit sphere $S^{2n-1}$ of $\cc^n$, define the quadratic form
\[
X_\bu := \bu^* A\bu = \sum_{i,j=1}^n a_{ij} \overline{u}_iu_j,
\]
where, as usual, $\overline{u}_j$ is the complex conjugate of $u_j$ and $\bu^*$ is the adjoint (i.e.\ conjugate transposed) of $\bu$. Since $A$ is Hermitian, $X_\bu$ is a real Gaussian random variable.

Now, if $\bv = z\bu$ for some $z$ in the unit sphere $U(1)$ of $\cc$, then $X_{\bv} \equiv X_{\bu}$. Therefore to retain identifiability, we define $X_{\bu}$ for each $\bu$ not in $S^{2n-1}$, but in the complex projective space $\cc\pp^{n-1} = S^{2n-1}/U(1)$. However, we will continue to write elements of $\cc\pp^{n-1}$ as if they were elements of $\cc^n$, with the quotienting being implicit. With that convention, let  
\[
\lambda_1 := {\textstyle \max_{\bu\in \cc\pp^{n-1}}} X_{\bu}, \ \ \ \bu_1 := \argmax_{\bu\in \cc\pp^{n-1}} X_{\bu}.
\]
Then $\lambda_1$ is the largest eigenvalue of the GUE matrix  $A$ and $\bu_1$ is the corresponding unit eigenvector. Our objective in this section is to show that $\bu_1$ is chaotic under small perturbations of $A$. 

Here we must remark that $\bu_1$ is almost surely well-defined in $\cc\pp^{n-1}$. This follows from the fact that the eigenvalues all have multiplicity $1$ almost surely, which can be deduced, for instance, from the well-known joint density of the eigenvalues of GUE (see e.g.\ Mehta \cite{mehta91}, Chapter 3).

Now let $A'$ be an independent copy of $A$, and as usual define the perturbed matrix $A^t := \est A+ \esst A'$. Let $\bu_1^t$ be the first eigenvector of $A^t$. We want to show that $|\smallavg{\bu_1, \bu_1^t}| := |\sum_{i=1}^n u_{1,i} \overline{u}_{1,i}^t|$ tends to decay rapidly with~$t$. Note that there is no ambiguity because for any $\bu,\bv\in \cc\pp^{n-1}$, $|\smallavg{\bu,\bv}|$ is well-defined (although $\smallavg{\bu,\bv}$ is not).
\begin{thm}
There is a universal constant $C$ such that for any $t\ge 0$, 
\[
\ee|\smallavg{\bu_1,\bu_1^t}|^2 \le \frac{C}{(1-e^{-t}) n^{1/3}},
\]
where $\bu_1^t$ is the first eigenvector of the perturbed matrix $A^t$ defined above.
\end{thm}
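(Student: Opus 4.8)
The plan is to recognize the family $(X_\bu)_{\bu\in\cc\pp^{n-1}}$, $X_\bu = \bu^* A\bu$, as a Gaussian field of exactly the type treated in Sections~\ref{intro}--\ref{proof:main}, and then to read off the bound from Theorem~\ref{varconv} together with the known variance estimate for the largest eigenvalue. The first step is to compute the covariance kernel. Using the GUE normalization (variance $1$ on the diagonal and $\ee|a_{ij}|^2 = 1$ off the diagonal), a direct calculation gives, for unit vectors $\bu,\bv$,
\[
\cov(X_\bu,X_\bv) = |\smallavg{\bu,\bv}|^2 .
\]
In particular $\var(X_\bu) = \|\bu\|^4 = 1$ for every $\bu$, so in the notation of Section~\ref{intro} we have $\sigma^2 = 1$, $M = \lambda_1$, and $I^0 = \bu_1$. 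Moreover $\bu^* A^t\bu = \est X_\bu + \esst X_\bu'$ is precisely the perturbed field $X_\bu^t$, so $I^t = \bu_1^t$ and, crucially, $R(I^0,I^t) = |\smallavg{\bu_1,\bu_1^t}|^2$. The non-degeneracy condition \eqref{nondeg} holds for any finite subfamily, since $\var(X_\bu - X_\bv) = 2\bigl(1-|\smallavg{\bu,\bv}|^2\bigr) > 0$ whenever $\bu\ne\bv$ in $\cc\pp^{n-1}$.

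Once this dictionary is in place, the bound is immediate from Theorem~\ref{varconv}, namely $\ee R(I^0,I^t) \le v/(1-\est)$, combined with $v = \var(\lambda_1) \le Cn^{-1/3}$, which is the variance bound for the top GUE eigenvalue proved by Ledoux~\cite{ledoux03} and independently by Aubrun~\cite{aubrun05}. The only wrinkle is that the index set $\cc\pp^{n-1}$ is a continuum rather than a finite set, so Theorem~\ref{varconv} does not apply verbatim. I would resolve this by a standard approximation: for a finite $\epsilon$-net $N\subseteq\cc\pp^{n-1}$ set $M_N := \max_{\bu\in N} X_\bu$ and $I_N^t := \argmax_{\bu\in N} X_\bu^t$; since $\bu\mapsto X_\bu$ is a.s.\ continuous on the compact space $\cc\pp^{n-1}$ and the maximizer $\bu_1$ is a.s.\ unique (the GUE eigenvalues being a.s.\ simple), one has $M_N\to\lambda_1$ and $I_N^t\to\bu_1^t$ as the mesh of $N$ shrinks, with $\var(M_N)\to v$ by the uniform $L^2$ bound of Lemma~\ref{max} and $\ee R(I_N^0,I_N^t)\to \ee R(\bu_1,\bu_1^t)$ by bounded convergence (recall $0\le R\le 1$). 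Applying Theorem~\ref{varconv} to each finite field $(X_\bu)_{\bu\in N}$ and letting $\epsilon\to 0$ yields
\[
\ee|\smallavg{\bu_1,\bu_1^t}|^2 = \ee R(I^0,I^t) \le \frac{v}{1-\est} \le \frac{C}{(1-\est)\,n^{1/3}}.
\]

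The main obstacle is essentially just this continuum-index-set technicality; it is soft, relying only on compactness, continuity of the quadratic form $\bu\mapsto\bu^*A\bu$, and a.s.\ simplicity of $\lambda_1$, but it is the one place where a careful argument is needed rather than a direct citation. Everything genuinely quantitative is borrowed: the Tauberian bound of Theorem~\ref{varconv} and the $n^{-1/3}$ variance bound of Ledoux--Aubrun. The only conceptual input original to this section is the identification $R(I^0,I^t) = |\smallavg{\bu_1,\bu_1^t}|^2$, which is what turns the abstract "chaos of the argmax" of Theorem~\ref{main}/\ref{varconv} into the concrete statement about decay of the eigenvector overlap.
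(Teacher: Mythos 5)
Your proposal is correct and matches the paper's proof essentially step for step: the same computation of the covariance kernel, the same identification $R(I^0,I^t)=|\smallavg{\bu_1,\bu_1^t}|^2$ with $\sigma^2=1$, the same appeal to Theorem~\ref{varconv} together with the Ledoux--Aubrun bound $\var(\lambda_1)\le Cn^{-1/3}$, and the same resolution of the continuum-index issue via finite nets, continuity of $\bu\mapsto\bu^*A\bu$, and a.s.\ simplicity of the top eigenvalue. The paper only sketches the net argument in one sentence; you spell it out a bit more, but the route is identical.
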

\noindent {\it Remarks.} This shows that whenever $t\gg n^{-1/3}$, the vectors $\bu_1$ and $\bu_1^t$ are almost orthogonal with high probability. As mentioned before, this result also proves that the ground state of the (complex) spherical Sherrington-Kirkpatrick model is chaotic under small perturbations of the disorder.
\begin{proof}
An easy computation gives that for any $\bu,\bv \in \cc\pp^{n-1}$,
\[
\cov(X_{\bu}, X_{\mathbf{v}}) = |\smallavg{\bu,\bv}|^2,
\]
where one should note that the right hand side is well-defined on $\cc\pp^{n-1}$.
It is known from random matrix theory \cite{ledoux03, aubrun05} that $\var(\lambda_1) \le Cn^{-1/3}$. Therefore above formula for the covariance and Theorem \ref{varconv} seem to imply that the proof is done. However, we have to be a little careful because Theorem \ref{varconv} works only for Gaussian fields on finite index sets. But this can be easily taken care of by taking the Gaussian field $(X_\bu)$ restricted to finer and finer nets of points in $\cc\pp^{n-1}$ and using the uniqueness of the maximizer and continuity to pass to the limit. 
\end{proof}

\section{Example: Multiple global maxima in the $NK$ fitness landscape}\label{nk}
Kauffman and Levin \cite{kauffmanlevin87} introduced a class of models for the evolution 
of hereditary systems, which has since become one of the most popular models in evolutionary biology and some other areas. They named it the $NK$ model because there are two parameters, $N$ and $K$. The model envisions a genome as consisting on $N$ genes, each of which exists as one of two possible alleles. The fitness score of an allele at a given site is determined by the alleles of the $K$ neighboring sites. Other than that, the fitnesses are as simple as possible, namely i.i.d., and the fitnesses of different sites are averaged to get the overall fitness of a genome. 

Let us define things formally. The space of all genomes is $\{0,1\}^N$. Let
\[
Y(i; \boe), \ 0\le i\le N-1, \ \boe\in \{0,1\}^{K+1}
\]
be a collection of i.i.d.\ random variables, assumed to be standard Gaussian for our purposes. Given $\bos \in \{0,1\}^N$, define the `fitness' of $\bos$ as
\[
F(\bos) := \sum_{i=0}^{N-1} Y(i; (\sigma_i,\sigma_{i+1},\ldots,\sigma_{i+K})),
\]
where the subscripts of $\sigma$ are wrapped around, i.e.\ $\sigma_{N + i} = \sigma_i$. The function $F:\{0,1\}^N \ra \rr$ is called the `fitness landscape', and the main objects of interest are the local and global maxima of this landscape. 

The $NK$ model has been extensively studied, but very little of it is rigorous. The first rigorous paper on the model, to the best of our knowledge, was due to Evans and Steinsaltz \cite{evanssteinsaltz02}. These authors used elegant arguments involving max-plus algebras to carry out computations about the global and local maxima of the $NK$ model when $K$ is fixed and $N\ra \infty$. One can also find in \cite{evanssteinsaltz02} a beautifully written introduction to the history and motivation behind the model. Further rigorous results were derived using different tools by Durrett and Limic \cite{durrettlimic03} who proved, among other things, a central limit theorem for the maximum fitness when $K$ is fixed and $N\ra \infty$. Some results about the local maxima in the case where $N$ and $K$ both tend to infinity were obtained by Limic and Pemantle~\cite{limicpemantle04}. 

Our goal is to show that when $N$ and $K$ both tend to infinity, the fitness landscape has many global near-maxima, which are `far away' from each other. In other words, there are many nearly globally fittest genomes that are drastically different from each other. To formalize this statement, we first need define what is meant by `far away' in the space of genomes. The $NK$ model naturally defines the following measure of proximity between two genomes $\bos$ and $\bos'$:
\begin{equation}\label{pnk}
p_{N,K}(\bos,\bos') := |\{i: (\sigma_i,\ldots, \sigma_{i+K})= (\sigma'_i,\ldots,\sigma'_{i+K})\}|. 
\end{equation}
This is  a natural definition because 
\[
p_{N,K}(\bos, \bos') = \cov(F(\bos),F(\bos')).
\]
Note also that if $K$ is large, then $\bos$ and $\bos'$ may be far apart even if the Hamming distance between $\bos$ and $\bos'$ is relatively small.

To understand the nature of the global maximum, we first have to have an idea about its size. It was shown by Evans and Steinsaltz \cite{evanssteinsaltz02} that the size of global maximum grows linearly in $N$ when $K$ is fixed (in an asymptotic sense). Following their argument, one can further deduce the surprising fact that when divided by $N$ the expected size of the global maximum can be bounded above and below by universal constants that do not depend on $K$.
\begin{lmm}\label{nkexp}
Irrespective of the value of $K$, we have
\[
\frac{N}{\sqrt{\pi}} \le \ee(\max_\bos F(\bos)) \le N \sqrt{2\log 2}.
\]
\end{lmm}
\noindent {\it Remark.} In fact, the bounds are sharp. The lower bound is achieved when $K=0$, and the upper bound is achieved (asymptotically) when $K=N-1$. Moreover, as we will see in the proof, the expected value is an increasing function of $K$. 
\begin{proof}
The upper bound is straightforward from Lemma \ref{max}. For the lower bound, first observe that if $G(\bos)$ is another measure of fitness, corresponding  to the $NK$ model with $K=0$, then for every $\bos, \bos'$, we have 
\[
\cov(G(\bos),G(\bos')) \ge \cov(F(\bos),F(\bos')),
\]
since it is quite clear that $p_{N,K}(\bos,\bos')$ is a decreasing function of $K$. Moreover $\var(F(\bos))=\var(G(\bos))= N$ for every $\bos$. Therefore by Slepian's lemma, 
\[
\ee(\max_\bos F(\bos)) \ge \ee(\max_\bos G(\bos)). 
\]
Now, if $(Z(i,\eta))_{0\le i\le N-1, \ \eta\in \{0,1\}}$ are the i.i.d.\ Gaussian fitness scores used to define $G$, then 
\[
\max_\bos G(\bos) = \sum_{i=0}^{N-1} \max\{Z(i,0),  Z(i,1)\}.
\]
It is easy to compute that the expected value of the maximum of two independent standard Gaussian random variables is $\pi^{-1/2}$. This completes the proof.
\end{proof}

The following theorem is the main result of this section. It shows that when $N$ and $K$ both tend to infinity, the fitness landscape exhibits the multiple peaks property. We first establish superconcentration using hypercontractivity, and then use Theorem \ref{p2p3} to deduce the existence of multiple peaks. 
\begin{thm}
Suppose $N > K \ge 1$. Then $\var(\max_\bos F(\bos)) \le CN/K$, where $C$ is a universal constant. As a consequence, there is another universal constant $C'$ such that for any $a > 1$, with probability at least 
\[
1-C'a^{-1/4}\sqrt{\log a}
\]  
there exists $A\subseteq \{0,1\}^N$ such that
\begin{enumerate}
\item[(a)] $|A| \ge a^{1/12}$,
\item[(b)] $p_{N,K}(\bos,\bos') < a^{-1/4} N$ for all $\bos,\bos'\in A$, $\bos\ne \bos'$, where $p_{N,K}$ is the measure \eqref{pnk} of proximity between genomes, and
\item[(c)] for all $\bos\in A$, 
\[
F(\bos) \ge \max_{\bos'\in \{0,1\}^N}  F(\bos') -  \frac{a N}{K}.
\]
\end{enumerate}
\end{thm}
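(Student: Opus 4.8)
The plan is to establish the two assertions in turn: the variance bound $\var(\max_{\bos} F(\bos)) \le CN/K$ by the hypercontractivity template of Proposition~\ref{corrbd}, and then the multiple peaks conclusion by substituting this bound, together with Lemma~\ref{nkexp}, into Theorem~\ref{p2p3}. Throughout, $\bbx = (F(\bos))_{\bos \in \{0,1\}^N}$ is the relevant Gaussian field on $S = \{0,1\}^N$: since the $N$ summands defining $F(\bos)$ are independent $N(0,1)$ variables indexed by distinct $i$, we have $\sigma^2 = \var(F(\bos)) = N$ for every $\bos$, the covariance kernel is $R(\bos,\bos') = p_{N,K}(\bos,\bos')$, and the non-degeneracy condition~\eqref{nondeg} holds because $F(\bos) - F(\bos')$ is a nonzero linear combination of independent Gaussians whenever $\bos \ne \bos'$; in particular $I^0 := \argmax_{\bos} F(\bos)$ is well defined.

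For the variance bound, the only nontrivial ingredient is a symmetry computation. For any $\boldsymbol\zeta \in \{0,1\}^N$, coordinatewise XOR with $\boldsymbol\zeta$ preserves window-agreement, so $p_{N,K}(\bos\oplus\boldsymbol\zeta, \bos'\oplus\boldsymbol\zeta) = p_{N,K}(\bos,\bos')$; hence the fields $(F(\bos\oplus\boldsymbol\zeta))_{\bos}$ and $(F(\bos))_{\bos}$ share the same covariance, hence the same law, and applying $\argmax$ gives $I^0 \oplus \boldsymbol\zeta \stackrel{d}{=} I^0$. Letting $\boldsymbol\zeta$ range over $\{0,1\}^N$ shows $I^0$ is uniformly distributed on $\{0,1\}^N$. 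Consequently, for each position $i$ and each window value $w \in \{0,1\}^{K+1}$, the set $A_{i,w} := \{\bos : (\sigma_i,\dots,\sigma_{i+K}) = w\}$ (indices mod $N$) satisfies $\pp(I^0 \in A_{i,w}) = |A_{i,w}|/2^N = 2^{-(K+1)}$. Applying Lemma~\ref{hyper} to the indicator of $\{I^0 \in A_{i,w}\}$, viewed as a $\{0,1\}$-valued function of $\bbx$, gives $\pp(I^0 \in A_{i,w},\ I^t \in A_{i,w}) \le (2^{-(K+1)})^{1+\tanh(t/2)}$ for every $t \ge 0$. Summing over the $2^{K+1}$ values of $w$ and then over the $N$ positions $i$,
\[
\ee\, p_{N,K}(I^0, I^t) = \sum_{i=0}^{N-1} \pp\big(I^0 \text{ and } I^t \text{ agree on window } i\big) \le N\, 2^{-(K+1)\tanh(t/2)},
\]
and then Lemma~\ref{varmax}, the bound $\tanh(t/2) \ge 1 - e^{-t/2}$ from~\eqref{tanhbd}, and $e^{-t} \le e^{-t/2}$ give
\[
v = \int_0^\infty e^{-t}\, \ee\, p_{N,K}(I^0,I^t)\, dt \le N\int_0^\infty e^{-t/2}\, 2^{-(K+1)(1-e^{-t/2})}\, dt = 2N\int_0^1 2^{-(K+1)(1-u)}\, du \le \frac{2N}{(K+1)\log 2},
\]
which is at most $CN/K$.

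For the multiple peaks statement, $R \ge 0$ so Theorem~\ref{p2p3} applies, and Lemma~\ref{nkexp} gives $m \asymp N$. Given $a > 1$, invoke Theorem~\ref{p2p3} with $l = \lceil a^{1/12} \rceil \ge 2$, $\epsilon = a^{-1/4} N \in (0,\sigma^2)$, and $\delta = \min(aN/K,\ m/2) \in (0,m)$; since $\delta \le aN/K$, the resulting set $A$ has $|A| = l \ge a^{1/12}$, $p_{N,K}(\bos,\bos') < a^{-1/4}N$ for distinct $\bos,\bos' \in A$, and $F(\bos) \ge \max F - aN/K$ on $A$, exactly as required. It remains to bound the failure probability~\eqref{prob} by $C' a^{-1/4}\sqrt{\log a}$: plugging in $v \le CN/K$, $\sigma^2 = N$, $m \asymp N$, and $\log l \le C\log a$, the first term of~\eqref{prob} is $\asymp a^{-1/4}\sqrt{\log a}$, while the second, using $K < N$, is at most $\big(CK^2(\log a)\, a^{-7/3} N^{-2}\big)^{1/4} \le C a^{-7/12}(\log a)^{1/4} \le C a^{-1/4}\sqrt{\log a}$. (The range $\delta = m/2$, which forces $a \gtrsim K$, is handled the same way; and when $a$ is below a universal constant the claimed probability is negative, so there is nothing to prove.)

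The main obstacle is pinning down the right probabilistic input for the hypercontractivity step: Lemma~\ref{hyper} by itself only improves the trivial bound $v \le \sigma^2 = N$ by a constant factor, and the $1/K$ gain comes entirely from the fact that $\pp(I^0 \in A_{i,w})$ is as small as $2^{-(K+1)}$ --- which is cleanest to see via the exact uniformity of the global maximizer on $\{0,1\}^N$. Once that symmetry is established, the first part is a direct instance of the Proposition~\ref{corrbd} calculation and the second is routine bookkeeping with the exponents in~\eqref{prob}.
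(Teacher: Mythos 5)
Your proof is correct and follows essentially the same route as the paper: the variance bound comes from hypercontractivity (you route it through Lemma~\ref{varmax}, the paper through Lemma~\ref{intparts} applied to $M$ as a function of the underlying i.i.d.\ $Y(i;\boe)$; these are the same computation since the gradient inner product is exactly $p_{N,K}(I^0,I^t)$), combined with the symmetry fact that each window of the maximizer is uniform on $\{0,1\}^{K+1}$, and the multiple-peaks statement then drops out of Theorem~\ref{p2p3} with the same choices of $l,\epsilon,\delta$. Two small remarks: your explicit XOR argument for uniformity of $I^0$ and your cap $\delta=\min(aN/K,\,m/2)$ are slightly more careful than the paper (which simply sets $\delta=aN/K$, potentially violating $\delta<m$ for $a\gtrsim K$), but your parenthetical claim that the stated probability becomes negative for $a$ near $1$ is false --- $a^{-1/4}\sqrt{\log a}\to 0$ as $a\to 1^+$, so the bound is \emph{nonvacuous} there while the bound~\eqref{prob} from Theorem~\ref{p2p3} does not go to $0$ (since $l=2$ and $\log l$ stays bounded below); this is a genuine, if minor, gap in the small-$a$ regime that is already present in the paper's own derivation.
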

\noindent{\it Remarks.} By Lemma \ref{nkexp} and the concentration of $\max F(\bos)$, we know that $\max F(\bos)$ is of order $N$. This shows that whenever $K$ is large, there are many near-maximal configurations that are `far apart' in the sense of the proximity measure $p_{N,K}$, since $p_{N,K}$ ranges between $0$ and $N$. The quantification of `many' and `far apart' depends on our choice of $a$. The theorem shows that any large $a$ works, as long as $a \ll K$. Of course, the theorem in its present form does not have any relevance for realistic values of $N$ and $K$, but we believe that there is room for improvement to an extent that can be of practical significance. Finally, note that the theorem proves strong multiple peaks when $K$ grows faster than $\sqrt{N}$. 

\begin{proof}
Let $M = \max_\bos F(\bos)$.
Let $\hat{\bos}$ be the maximizing configuration. With obvious notation, we have
\[
\fpar{M}{Y(i; \boe)} =  \ii_{\{(\hat{\sigma}_i,\ldots,\hat{\sigma}_{i+K}) = (\eta_1,\ldots,\eta_{K+1})\}}.
\]
Therefore by Lemma \ref{intparts}, Lemma \ref{hyper}, and symmetry, we have
\begin{align*}
&\var(M) \\
&= \int_0^\infty e^{-t}\sum_{(i,\boe)} \ee\bigl(\ii_{\{(\hat{\sigma}_i,\ldots,\hat{\sigma}_{i+K}) = (\eta_1,\ldots,\eta_{K+1})\}}\ii_{\{(\hat{\sigma}_i^t,\ldots,\hat{\sigma}_{i+K}^t) = (\eta_1,\ldots,\eta_{K+1})\}}\bigr) dt\\
&\le \int_0^\infty e^{-t} \sum_{(i,\boe)} \bigl[\pp((\hat{\sigma}_i,\ldots,\hat{\sigma}_{i+K}) = (\eta_1,\ldots,\eta_{K+1}))\bigr]^{1+\tanh(t/2)} dt\\
&= \int_0^\infty e^{-t} \sum_{(i,\boe)} 2^{-(K+1)(1+\tanh(t/2))} dt\\
&=\int_0^\infty e^{-t} N 2^{-(K+1)\tanh(t/2)} dt\le \frac{CN}{K},
\end{align*}
where $C$ is a universal constant. 
Now let $m = \ee(M)$ and $v = \var(M)$. 
From Lemma \ref{nkexp}, we know that $N/\sqrt{\pi} \le m\le N\sqrt{2\log 2}$, and from the above computation we know that $v\le CN/K$. Let $\sigma^2 = \max_\bos \var(F(\bos)) = N$. Take any $a > 1$, and let $l = [a^{1/12}] + 1$, $\epsilon = a^{-1/4} N$, and $\delta = a N/K$. Then 
\begin{align*}
\sqrt{\frac{vml^3\log l}{\delta \epsilon }}  + \biggl(\frac{v\sigma^4l^5 \log l}{\delta^3 m \epsilon}\biggr)^{1/4} &\le C a^{-1/4}\sqrt{\log a} + C\biggl(\frac{K^2a^{-7/3}\log a}{N^2}\biggr)^{1/4},
\end{align*}
where $C$ is a universal constant. The second term is dominated by the first. The result now follows from Theorem \ref{p2p3}.
\end{proof}

\section{Example: Chaos in directed polymers}\label{polymer}
We introduced directed polymers in Section \ref{intro}. Let us now refresh the reader's memory by defining it once again.

The (1+1)-dimensional directed polymer model in Gaussian environment is defined as follows. Let $\zz_+^2$ denote the set of all pairs of non-negative integers, with the lattice graph structure. Let $\bbg = (g_v)_{v\in \zz_+^2}$ be a collection of i.i.d.\ standard Gaussian random variables. A directed polymer of length $n$ is a sequence of $n$ adjacent points, beginning at the origin, such that each successive point is either to the right or above the previous point. Thus, there are a total of $2^{n-1}$ directed polymers of length $n$. Let $\mathcal{P}_n$ denote this set. An element of $\mathcal{P}_n$ will generally be denoted by $p=(v_0,\ldots,v_{n-1})$, where $v_0=0$. The energy of a polymer $p = (v_0,\ldots,v_{n-1})$ in the environment $\bbg$ is defined as
\[
E(p) := -\sum_{i=0}^{n-1} g_{v_i}.
\] 
For each $p\in \mathcal{P}_n$, let $X_p := -E(p)$ denote the `weight' of the path, and let $\bbx = (X_p)_{p\in \mathcal{P}_n}$. Our object of interest is the polymer with the minimum energy (i.e.\ maximum weight), usually called the ground state of the system. Suppressing the subscript $n$, we simply denote the minimum energy path by $P$ and its energy by $-M$, that is,
\[
M := \max_{p\in \mathcal{P}_n}X_p = X_P.
\]
Note that for any $p,p'$,
\[
\cov(X_p, X_{p'}) = |p\cap p'|,
\]
where $|p\cap p'|$ denotes the number of vertices common to $p$ and $p'$.

If $\bbg'$ is an independent copy of $\bbg$, and we define the perturbed environment $\bbg^t := \est \bbg + \esst \bbg'$, then the energies of the paths defined in the environment $\bbg^t$ correspond to our usual definition of $\bbx^t$. Let $P^t$ denote the maximum weight path in the environment $\bbg^t$ (henceforth, simply the `maximal path'). The main result of this section is the following.
\begin{thm}\label{poly}
For some universal constant $C$, we have
\[
\var(M)\le \frac{Cn}{\log n}.
\]
Consequently, for any $t\ge 0$,
\[
\ee|P\cap P^t| \le \frac{Cn}{(1-e^{-t})\log n}. 
\]
\end{thm}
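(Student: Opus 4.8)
The plan is to run the hypercontractivity method (Talagrand), in the form of Lemma~\ref{hyper}, on top of the variance identity of Lemma~\ref{varmax} — this is the directed-polymer counterpart of the Benjamini--Kalai--Schramm sublinear-variance argument for first-passage percolation — the sole model-specific input being a bound on the probability that the optimal path passes through a fixed vertex. Note first that $\bbx = (X_p)_{p\in\mathcal{P}_n}$ is a finite centered Gaussian field with $R(p,q) = |p\cap q|$, $\sigma^2 = n$ and $I^t = P^t$, and that the non-degeneracy condition \eqref{nondeg} holds because $X_p - X_{p'}$ is a non-degenerate Gaussian for $p\ne p'$. Lemma~\ref{varmax} applied to this field gives
\[
\var(M)\ =\ \ee\,R(I^0,I^\tau)\ =\ \ee|P\cap P^\tau|\ =\ \int_0^\infty e^{-t}\,\ee|P\cap P^t|\,dt ,
\]
so it suffices to bound $\ee|P\cap P^t|$ for each $t$.

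Write $|P\cap P^t| = \sum_v \ii_{\{v\in P\}}\,\ii_{\{v\in P^t\}}$. For each vertex $v$, the map $\ii_{\{v\in P\}}$ is a $\{0,1\}$-valued function of $\bbx$ — it equals $1$ exactly when the maximizing path of $\bbx$ contains $v$ — and $\ii_{\{v\in P^t\}}$ is the same function of $\bbx^t$. Hence Lemma~\ref{hyper} yields $\ee\bigl(\ii_{\{v\in P\}}\ii_{\{v\in P^t\}}\bigr)\le \pp(v\in P)^{1+\tanh(t/2)}$, and summing,
\[
\ee|P\cap P^t|\ \le\ \sum_{v}\pp(v\in P)^{1+\tanh(t/2)} .
\]

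The heart of the matter — and the main obstacle — is the per-vertex estimate: for a universal constant $C$, every vertex $v = (a,b)$ with $k := a+b$ satisfies $\pp(v\in P)\le C/\sqrt{k+1}$. To prove it one splits the optimal path at the vertex where it crosses the level set $\{a+b=k\}$ (which contains $k+1$ sites, generically $w$), writing the optimal weight as $\max_w(\alpha_w+\beta_w)$, where $\alpha_w$ and $\beta_w$ are the best weights of the first $k$ and the last $n-1-k$ steps subject to passing through $w$ — these are independent of one another for fixed $w$. The ``bulk'' sites, those within $O(\sqrt{k})$ of the centre of the level set, are each reached from the origin by $\asymp 2^{k}/\sqrt{k}$ distinct paths; since the maximum of $N$ essentially independent Gaussians of variance $\asymp k$ sits at height $\asymp\sqrt{k\log N}$ with fluctuations of $O(1)$ scale and bounded density, the $\alpha_w$ over the bulk are comparably distributed and stochastically much larger than at the near-boundary sites. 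Consequently the crossing vertex lies in the bulk with high probability, and there — by anti-concentration, i.e.\ the bounded densities of the $\alpha_w+\beta_w$ — it is not concentrated on any one of the $\asymp\sqrt k$ bulk sites beyond probability $C/\sqrt k$, while boundary sites carry only probability exponentially small in $k$. The delicate part is getting the clean $1/\sqrt{k+1}$ with \emph{no logarithmic factor}: even a bound of the form $\sqrt{\log k}/\sqrt{k+1}$ would yield only $\var(M)\le Cn$ rather than $Cn/\log n$.

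Given the per-vertex bound the rest is bookkeeping. The optimal path crosses each level set $\{a+b=k\}$, $0\le k\le n-1$, at exactly one vertex, so $\sum_{v:\,a+b=k}\pp(v\in P) = 1$; therefore, writing $\theta := \tanh(t/2)\in[0,1)$,
\[
\sum_v\pp(v\in P)^{1+\theta}\ \le\ \sum_{k=0}^{n-1}\Bigl(\max_{v:\,a+b=k}\pp(v\in P)\Bigr)^{\theta}\ \le\ C\sum_{k=1}^{n}k^{-\theta/2}\ \le\ C\,n^{1-\theta/2} .
\]
Substituting into the variance identity, using $\tanh(t/2)\ge 1-e^{-t/2}$ from \eqref{tanhbd} and the change of variable $u = e^{-t/2}$,
\[
\var(M)\ \le\ C\,n\int_0^\infty e^{-t}\,n^{-\tanh(t/2)/2}\,dt\ \le\ C\,n\int_0^1 e^{-\frac{(1-u)}{2}\log n}\,du\ \le\ \frac{Cn}{\log n} .
\]
Finally, for the ``consequently'' assertion, $\ee|P\cap P^t| = \ee R(I^0,I^t)$ in the notation of Section~\ref{intro}, so Theorem~\ref{varconv} gives $\ee|P\cap P^t|\le v/(1-e^{-t}) = \var(M)/(1-e^{-t})\le Cn/\bigl((1-e^{-t})\log n\bigr)$, as claimed.
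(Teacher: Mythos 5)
Your scheme --- apply hypercontractivity (Lemma~\ref{hyper}) directly to the per-vertex influences $\pp(v\in P)$, sum via a H\"older-type bound, and integrate against the exponential density --- is not the route the paper takes, and the difference is exactly where your proof breaks. Everything hinges on the per-vertex estimate $\pp(v\in P)\le C/\sqrt{k+1}$ for $v$ on the level set $\{a+b=k\}$, with \emph{no} logarithmic loss. The paper states explicitly that such a bound is not available: ``the probability of the optimal path passing through a given vertex is completely unknown; even a reasonable upper bound is unknown,'' and names this precisely as the reason it imports the Benjamini--Kalai--Schramm averaging device. The paper's proof never bounds a per-vertex influence; instead it replaces $M$ by $\overline{M}=|B|^{-1}\sum_{w\in B}M_w$, an average over translates of the origin across an $[n^{1/4}]\times[n^{1/4}]$ square $B$, and observes that the influence of a fixed vertex on $\overline{M}$ is at most the fraction of translated optimal paths that can pass through it, which is $\le (2[n^{1/4}]-1)/|B|\le Cn^{-1/4}$ \emph{deterministically} by the directedness of paths. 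That deterministic bound is what powers the hypercontractive estimate in Lemma~\ref{mbar}; the remaining work (Lemmas~\ref{mdiff}--\ref{mmwbd}) is devoted to a coupling argument showing $\ee(M-\overline{M})^2=O(n^{3/4}\log n)$, which is of a wholly different flavour.

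The heuristic you give for the per-vertex claim does not constitute a proof. The point-to-point passage weights $\alpha_w$ to the $k+1$ sites on level $k$ are far from independent: paths to nearby $w$ share almost all of their vertices, so the $\alpha_w$ are heavily correlated. Their fluctuations are not $O(1)$ but (conjecturally) of order $k^{1/3}$, and even that is not proven for Gaussian weights. And ``bounded density'' of each $\alpha_w+\beta_w$ does not imply anti-concentration of the argmax over $w$ --- a maximum of variables each having bounded density can perfectly well sit at a single index with probability bounded away from zero when the variables are strongly dependent, as here. Establishing $\max_{v:a+b=k}\pp(v\in P)\le Ck^{-1/2}$ would be a genuine localization statement for the Gaussian polymer for which no rigorous argument is currently known. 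The bookkeeping in the rest of your write-up (the step $\sum_v p_v^{1+\theta}\le \sum_k (\max_v p_v)^\theta$ because $\sum_{v:a+b=k}p_v=1$; the substitution $u=e^{-t/2}$; the passage from $\var(M)$ to $\ee|P\cap P^t|$ via Theorem~\ref{varconv}) is correct, but it rests on an input that is unproven.
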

\noindent It is easy to see how Theorem \ref{polyad} follows from this result. Another remark is that the superconcentration of the ground state energy has implications about the fluctuations of the polymer shape and specially the end point of the minimum energy polymer; see Wehr and Aizenman  \cite{aizenmanwehr90} for further insights.

One may object that the perturbation described above is not a small perturbation at all, because we are perturbing all coordinates, and moreover  independently. Indeed, the regular notion of perturbation in noise-sensitivity theory involves choosing a small fraction of coordinates at random and replacing the weights with independent copies. 
However, the two notions lead to the same results in practice, because in one case  we are giving large perturbations to a small collection of coordinates, while in the other case (i.e.\ our case) we are giving small perturbations to all coordinates. One can verify that with correct calibration, the two perturbations have similar effects on almost every conceivable summary statistic. 

Secondly, our definition of perturbation of a Gaussian field is the one favored by the physicists, who see it as an Ornstein-Uhlenbeck flow over a small period of time.

The chaos property for directed polymers was first argued heuristically by Huse, Henley, and Fisher \cite{husehenleyfisher85}. It was subsequently studied numerically and theoretically by Zhang~\cite{zhang87} and M\'ezard \cite{mezard90}. A detailed theoretical study was done by Fisher and Huse \cite{fisherhuse91} and later, another one by Hwa and Fisher~\cite{hwafisher94}. For a more recent work, see da Silveira and Bouchaud \cite{silveirabouchaud04}. 
However, none of these papers give rigorous proofs, or even proofs that can be made rigorous with existing technology. To the best of our knowledge, no such proof exists.

The scheme of the proof of Theorem \ref{poly} is straightforward given the tools we have. We apply Talagrand's technique of bounding variances using hypercontractivity, and use the resulting superconcentration bound to infer chaos via Theorem \ref{main} (and the actual bound via Theorem \ref{varconv}). However, on our way to applying hypercontractivity, we run into the difficulty that the probability of the optimal path passing through a given vertex is completely unknown; even a reasonable upper bound is unknown. This problem is overcome by a brilliant trick from a paper of Benjamini, Kalai, and Schramm~\cite{bks03}, where a similar variance-bounding problem for first passage percolation was tackled. But serious technical issues remain even after this, because adapting the BKS trick requires a lot more effort in this case compared to the case handled in~\cite{bks03}, because of the restriction that the paths have to be directed. In fact, this is the sole reason why the proof in its present form does not generalize to dimensions higher than $2$, even though the BKS result for first passage percolation holds in all dimensions.

We should mention here that there exists a large  amount of rigorous mathematics on directed polymers, even if there is none on the chaos problem. The rigorous study was probably initiated by Imbrie and Spencer \cite{imbriespencer88}, and subsequently carried forward by many authors (e.g.\ \cite{bolthausen89},\cite{sinai95}, \cite{albeveriozhou96}, \cite{songzhou96}, \cite{kifer97}, \cite{carmonahu02}, \cite{carmonahu04}, \cite{cometsyoshida06}, \cite{cometsshigayoshida04}). A very notable contribution was due to Johansson \cite{johansson00}, who found a miraculous way to do exact computations in the model with Geometric vertex weights instead of Gaussian, showing that $M$ has fluctuations of order $n^{1/3}$, and moreover a Tracy-Widom limiting distribution upon proper centering and scaling. The result was extended to binary edge weights in Johansson \cite{johansson01}, Section 5.1 (see also \cite{gravnertracywidom01}). The recent work of Cator and Groeneboom~\cite{catorgroeneboom06} implies a probabilistic proof of the  $n^{1/3}$ fluctuations, but again for Geometric vertex weights.

We need some preparation before embarking on the proof of Theorem \ref{poly}. First of all, let us remind the reader of our convention that $C$ denotes any positive universal constant whose value may change from line to line. This convention will be repeatedly invoked in the remainder of this section. Next, for each $w \in \zz_+^2$, define the translated environment $\bbg_w = (g_{w,v})_{v\in \zz_+^2}$ as $g_{w,v} := g_{w+v}$.
Now fix $n$, and define $\bbx_w$, $\bbx_w^t$, $P_w$, $P_w^t$, and $M_w$ as the analogs of $\bbx$, $\bbx^t$, $P$, $P^t$, and $M$ for the environment $\bbg_w$. Next, let
\begin{equation}\label{bdef}
B := \{(i,j): 1\le i,j\le [n^{1/4}]\},
\end{equation}
and let 
\[
\overline{M} := \frac{1}{|B|} \sum_{w\in B} M_w.
\]
Our first lemma is the following.
\begin{lmm}\label{mbar}
We have 
\[
\var(\overline{M}) \le \frac{Cn}{\log n}.
\]
\end{lmm}
\begin{proof}
With obvious notation, we have
\[
\fpar{\overline{M}}{g_v} = \frac{1}{|B|} \sum_{w\in B} \fpar{M_w}{g_v} = \frac{1}{|B|} \sum_{w\in B} \ii_{\{v-w\in P_w\}}
\]
Therefore by Lemma \ref{intparts} and Lemma \ref{hyper} we have
\begin{align*}
\var(\overline{M}) &= \int_0^\infty e^{-t} \sum_{v\in \zz_+^2} \ee\biggl[\biggl(\frac{1}{|B|}\sum_{w\in B} \ii_{\{v-w\in P_w\}}\biggr)\biggl(\frac{1}{|B|}\sum_{w\in B} \ii_{\{v-w\in P_w^t\}}\biggr)\biggr] dt\\
&\le \int_0^\infty e^{-t} \sum_{v\in \zz_+^2} \biggl[\ee\biggl(\frac{1}{|B|}\sum_{w\in B} \ii_{\{v-w\in P_w\}}\biggr)\biggr]^{1+\tanh(t/2)} dt.
\end{align*}
Now fix any $v\in \zz_+^2$. Then
\begin{align*}
\ee\biggl(\frac{1}{|B|}\sum_{w\in B} \ii_{\{v-w\in P_w\}}\biggr) &= \frac{1}{|B|}\sum_{w\in B} \pp(v-w\in P_w)\\
&= \frac{1}{|B|}\sum_{w\in B} \pp(v-w\in P)= \ee\biggl(\frac{1}{|B|}\sum_{w\in B} \ii_{\{v-w\in P\}}\biggr).
\end{align*}
Since $P$ is an directed path and $B$ is an $[n^{1/4}]\times [n^{1/4}]$ square, we certainly have
\[
\sum_{w\in B} \ii_{\{v-w\in P\}} \le 2[n^{1/4}] - 1.
\]
Combining the last three observations, we have that
\begin{align*}
\var(\overline{M}) &\le \int_0^\infty Ce^{-t}n^{-\frac{1}{4}\tanh(t/2)} \ee\biggl(\frac{1}{|B|} \sum_{v\in \zz_+^2}\sum_{w\in B} \ii_{\{v-w\in P\}}\biggr) dt \\
&= \int_0^\infty Ce^{-t}n^{-\frac{1}{4}\tanh(t/2)} \ee\biggl(\frac{1}{|B|} \sum_{w\in B} \sum_{v\in \zz_+^2} \ii_{\{v-w\in P\}}\biggr) dt \\
&= \int_0^\infty Ce^{-t}n^{1-\frac{1}{4}\tanh(t/2)} dt \le \frac{Cn}{\log n}.
\end{align*}
This completes the proof of the lemma.
\end{proof}
Let us now introduce some further notation. For any $v\in \zz_+^2$, let $|v|$ denote the sum of its coordinates. For any $u,v\in \zz_+^2$, we write $u\le v$ if $v-u \in \zz_+^2$. For any such $u, v$, let 
\[
M_{u\ra v} := \max\biggl\{\sum_{u\in p\backslash\{v\}} g_u: \text{ $p$ is a directed path from $u$ to $v$}\biggr\}.
\]
For any $u\in \zz_+^2$, $n\ge 1$, let
\[
M_{u}^n := \max\biggl\{\sum_{u\in p} g_u: \text{ $p$ is a directed path of length $n$, starting at $u$}\biggr\}.
\]
Let $P_{u\ra v}$ and $P_u^n$ denote the maximizing paths.

Clearly, the distribution of $M_{u}^n$ depends only on $n$, and the distribution of $M_{u\ra v}$ depends only on $v-u$. Let us now prove some inequalities for the expected values of these quantities. In the following $0$ can either denote the real number $0$, or the point $(0,0)$ in $\zz^2$, depending on the context. 
\begin{lmm}\label{mdiff}
For any $1\le m\le n$, we have
\[
\ee(M_0^n) \ge \ee(M_0^m) + C(n-m).
\]
\end{lmm}
\begin{proof}
Clearly, it suffices to prove when $n = m+1$. Let $(v_0,\ldots, v_{m-1})$ denote the maximal path of length $m$. Let $u$ and $w$ be the vertices immediately to the right and above $v_{m-1}$. Since the definition of the maximal path of length $m$ does not involve vertices $v$ with $|v| \ge m$, it follows that $(g_u,g_w)$ is still a pair of i.i.d.\ standard Gaussian random variables, even though $u$ and $w$ are random. To complete the proof, note that $M_0^{m+1} \ge M_0^m + \max\{g_u, g_w\}$.
\end{proof}
\begin{lmm}\label{mup}
For any $v = (v^x, v^y)\in \zz_+^2$, we have
\[
\ee(M_{0\ra v}) \le C |v|\sqrt{H\biggl(\frac{v^x}{v^x + v^y}\biggr)},
\]
where $H:[0,1]\ra \rr$ is the function $H(a)= -a\log a -(1-a)\log (1-a)$. 
\end{lmm}
\begin{proof}
Note that the total number of directed paths from $0$ to $v$ is
\[
{v^x + v^y\choose v^x}.
\] 
For any $a\in [0,1]$, we have
\[
1 \ge {v^x + v^y\choose v^x}a^{v^x}(1-a)^{v^y}.
\] 
Taking $a = v^x/(v^x + v^y)$, we see that
\[
\log {v^x + v^y\choose v^x}\le -v^x \log a - v^y \log (1-a) = H(a)|v|.
\] 
The result now follows by Lemma \ref{max}.
\end{proof}
\begin{lmm}\label{expbd}
Let $P$ denote the maximal path of length $n$. Let $v=(v^x,v^y)$ be a vertex with $|v|\le n-1$. There is a universal constant $c \in (0,1)$ such that if $\min\{v^x, v^y\} \le c|v|$, then
\[
\pp(v\in P)\le 2e^{-c|v|^2/n}. 
\]
\end{lmm}
\begin{proof}
Let $\widetilde{P}$ denote the directed path of length $n$, originating from $0$ and passing through $v$, that maximizes the sum of weights among all such paths. Note that this is just the concatenation of the paths $P_{0\ra v}$ and $P_{v}^{n-|v|}$. Let $\widetilde{M} = \sum_{v\in \widetilde{P}} g_v$. Then note that by Lemma  \ref{mdiff},
\begin{align*}
\ee(\widetilde{M}) &= \ee(M_{0\ra v}) + \ee(M_v^{n-|v|})\\
&= \ee(M_{0\ra v}) + \ee(M_0^{n-|v|})\\
&\le \ee(M_{0\ra v}) + \ee(M) - C |v|,
\end{align*}
where $C$ is the constant from Lemma \ref{mdiff}. For the rest of this proof, $C$ will denote this constant. 
From the above bound, we see that if $\ee(M_{0\ra v}) \le C|v|$, then by Proposition \ref{borell} we have
\begin{align*}
\pp(v\in P) &= \pp(\widetilde{P}= P) \le \pp(\widetilde{M} = M) \\
&\le \pp\biggl(\widetilde{M} \ge \ee(\widetilde{M}) + \frac{\ee(M)- \ee(\widetilde{M})}{2}\biggr) \\
&\qquad + \pp\biggl(M \le \ee(M) - \frac{\ee(M)- \ee(\widetilde{M})}{2}\biggr)\\
&\le 2 \exp\biggl(-\frac{(\ee(M)-\ee(\widetilde{M}))^2}{8n}\biggr)\\
&\le 2 \exp\biggl(-\frac{(C|v|-\ee(M_{0\ra v}))^2}{8n}\biggr).
\end{align*}
Now, the function $H$ in Lemma \ref{mup} satisfies $H(a)=H(1-a)$ and 
\[
\lim_{a\ra 0}H(a)=\lim_{a\ra 1}H(a) = 0.
\]
Therefore, there exists a constant $c \in (0,1)$ such that if $\min\{v^x, v^y\} \le c|v|$, then $\ee(M_{0\ra v})\le C|v|/2$, where $C$ is the universal constant in Lemma \ref{mdiff}. This proves the current lemma by choosing $c$ sufficiently small. 
\end{proof}
\begin{lmm}\label{lbd}
Denote the vertices of the maximal path $P$ by $(v_0,\ldots,v_{n-1})$. Extend the path $P$ to an infinite directed path $P'$ by adding a sequence of points $v_n,v_{n+1},\ldots$, where each $v_i$ is the point immediately to the right of $v_{i-1}$ if $i$ is odd, and immediately above $v_{i-1}$ if $i$ is even. Take any $w\in \zz_+^2\backslash\{0\}$ with $|w|\le c(n-1)$, where $c$ is the constant from Lemma \ref{expbd}. Let $L := \min\{i: v_i\ge w\}$.
Then for any $l\le n-1$ such that $|w|\le cl$, we have
\[
\pp(L=l)\le 2 e^{-cl^2/n}.
\]
We also have $\pp(L\ge n) \le 2|w|e^{-c(n-1)^2/n}$. As a consequence of these bounds, we have $\ee(L) \le C(|w| + \sqrt{n})$ for some universal constant $C$.
\end{lmm}
\begin{proof}
Let $k := |w|$. It is easy to see that the definition of $L$ ensures that 
\begin{equation}\label{lbds}
k \le L\le n + k - 1.
\end{equation}
Take any integer $l\in [k,n-1]$. Since $|v_L|=L$, we have 
\begin{align*}
\pp(L = l) &= \pp(v_L = (w^x, l-w^x)  \text{ or } v_L = (l-w^y, w^y))\\
&\le \pp((w^x, l-w^x) \in P) + \pp((l-w^y, w^y)\in P). 
\end{align*}
The proof of the first bound can now be completed by applying Lemma \ref{expbd}. For the second, note that again by Lemma \ref{expbd},
\begin{align*}
\pp(L\ge n) &\le \sum_{i=0}^{w^x-1}\pp((i,n-1-i) \in P) + \sum_{i=0}^{w^y-1} \pp((n-1-i, i)\in P) \\
&\le 2|w|e^{-c(n-1)^2/n}. 
\end{align*}
Together with \eqref{lbds}, this completes the proof. 
\end{proof}
\begin{lmm}\label{rnklemma}
Let $\mathcal{R}_{n,k}$ be the set of all directed paths whose end points $u = (u^x, u^y)$ and $v=(v^x, v^y)$ satisfy $|u|\le 2n$, $|v|\le 2n$, and 
\begin{equation}\label{rnk}
\min\{|u^x-v^x|, |u^y- v^y|\} \le k.
\end{equation}
Let 
\[
R_{n,k} := \max_{p\in \mathcal{R}_{n,k}}\frac{|\sum_{v\in p} g_v|}{\sqrt{|p|}},
\]
where $|p|$ denotes the number of vertices in $p$. Then $\ee(R_{n,k}^2)\le C k\log n$. 
\end{lmm}
\begin{proof}
We wish to use Lemma \ref{max}. Clearly, for each $p\in \mathcal{R}_{n,k}$, 
\[
\frac{\sum_{v\in p} g_v}{\sqrt{|p|}} \sim N(0,1).
\]
Let us now find an upper bound on the size of $\mathcal{R}_{n,k}$. The end points of a path can be chosen in at most $Cn^2$ ways. Given the end points, in view of the restriction \eqref{rnk} and the directed nature of the paths, we can employ the same counting argument as in the proof of Lemma \ref{mup} to conclude that the number of paths is $\le (Cn)^k$. Thus, $|\mathcal{R}_{n,k}|\le n^2 (Cn)^k$. The claim now follows by Lemma \ref{max}. 
\end{proof}
\begin{lmm}\label{mmwbd}
For any $w\in \zz_+^2$, we have
\[
\ee(M-M_w)^2 \le C( |w|^2 + |w|\sqrt{n}) \log n.
\]
\end{lmm}
\begin{proof}
As before, let $k=|w|$. Since $\ee(M^2)$ is bounded by $Cn^2$ (easily verified by Lemma \ref{max}), we can assume without loss of generality that $k\le c(n-1)$. Let $P'$ and $L$ be defined as in Lemma \ref{lbd}. 
Denote the components of $v_i$ by $v_i^x$ and $v_i^y$. Then we know that either $v_L^x = w^x$ or $v_L^y = w^y$. Define a directed path $P'' = (u_0,\ldots,u_{n-1})$ as follows.  If $v_L^x = w^x$, let $u_0 = w$ and let $u_i$ be the point immediately above $u_{i-1}$ for $1\le i\le L-k$. If $v_L^x > w^x$ and $v_L^y = w^y$, let $u_0 = w$ and let $u_i$ be the point immediately to the right of $u_{i-1}$ for $1\le i\le L-k$. Note that in either case, we end up with $u_{L-k} = v_L$ because $|v_L|=L$. Thereafter, merge the path with $P'$, that is, let
\[
u_i = v_{k + i} \ \text{ for } \ L-k+1\le i\le n-1.
\]
Let $\mathcal{R}_{n,k}$ and $R_{n,k}$ be defined as  in Lemma \ref{rnklemma} above. Clearly, the paths $(v_0,\ldots, v_{\min\{L,n-1\}})$ and $(u_0,\ldots, u_{L-k})$ belong to $\mathcal{R}_{n,k}$. With this observation, and the fact that $P''$ is a directed path of length $n$ starting at $w$, we have
\begin{align*}
M_w &\ge \sum_{i=0}^{n-1} g_{u_i} = \sum_{i=0}^{L-k} g_{u_i} + \sum_{i=L}^{n+k-1} g_{v_i} \\
&= M - \sum_{i=0}^{\min\{L, n-1\}} g_{v_i} + \sum_{i=\max\{n, L\}}^{n+k-1} g_{v_i} + \sum_{i=0}^{L-k} g_{u_i}\\
&\ge M - 2\sqrt{L}R_{n,k} - k \max_{|v|\le 2n} |g_v|.
\end{align*}
Therefore, by the Cauchy-Schwarz inequality, Lemma \ref{max}, Lemma \ref{lbd}, and Lemma \ref{rnklemma}, it follows that
\begin{equation}\label{mmw1}
\ee(M-M_w)_+^2 \le C (|w|^2 + \sqrt{n}|w|) \log n.
\end{equation}
Next, let us get a bound on $\ee(M_w-M)_+^2$. Denote the vertices of $P_w$ by $(v_{w,0},\ldots,v_{w,n-1})$. Let $(u_0,\ldots,u_{k-1})$ be any fixed directed path with $u_0=0$ and $u_{k -1}= w$. Let $P_w'$ be the directed path 
\[
(u_0,\ldots,u_{k-1}, w+v_{w,1}, w+v_{w,2},\ldots, w+v_{w,n-k}).
\] 
Then note that
\begin{align*}
M &\ge \sum_{v\in P_w'} g_v = M_w + g_{u_0}+\cdots + g_{u_{k-2}} - g_{w+v_{w,n-k+1}}-\cdots - g_{w+v_{w,n-1}}\\
&\ge M_w - 2(k-1) \max_{|v|\le 2n} |g_v|.
\end{align*}
This shows that
\begin{equation}\label{mmw2}
\ee(M_w - M)_+^2 \le C |w|^2\log n.
\end{equation}
Combining \eqref{mmw1} and \eqref{mmw2}, our proof is done.
\end{proof}
\begin{proof}[Proof of Theorem \ref{poly}]
For each $w\in B$, we have
\[
\ee(M-M_w)^2 \le C (|w|^2 + \sqrt{n}|w|) \log n\le C n^{3/4}\log n.
\]
Therefore,
\[
\ee(M-\overline{M})^2 \le Cn^{3/4}\log n.
\]
Thus, by Lemmas \ref{mbar} and \ref{mmwbd} we have
\begin{align*}
\var(M) &\le \ee(M- \ee(\overline{M}))^2 \\
&\le 2\ee(M-\overline{M})^2 + 2\var(\overline{M})\\
&\le Cn^{3/4}\log n + \frac{Cn}{\log n}.
\end{align*}
This completes the proof.
\end{proof}

\section{Example: Generalized SK model of spin glasses}\label{skmodel}
Let $n$ be a positive integer and let $\Sigma_n = \{-1,1\}^n$. Suppose we have $n$ magnetic particles, each having spin $+1$ or $-1$. A typical element of $\bos = (\sigma_1,\ldots,\sigma_n)\in \Sigma_n$ is called a `configuration' of spins. Let $g = (g_{ij})_{1\le i<j\le n}$ be a collection of independent standard Gaussian random variables. Given a realization of $g$, define the energy of a configuration $\bos$ as
\[
H_n(\bos) = - \frac{1}{\sqrt{n}} \sum_{1\le i<j\le n} g_{ij} \sigma_i \sigma_j.
\]
The energy landscape so defined corresponds to the famous Sherrington-Kirkpatrick model of spin glasses \cite{sk75} in the absence of an external field. In the last thirty years, the SK model has been an inspiration for a large body of groundbreaking physics (see the M\'ezard-Parisi-Virasoro book  \cite{mezardetal87}) as well as beautiful rigorous mathematics (see e.g.\ \cite{alr87}, \cite{frohlichzegarlinski87}, \cite{cometsneveu95}, \cite{shcherbina97}, \cite{talagrand98}, \cite{guerra02}, \cite{guerra03}, \cite{tindel05}, \cite{talagrand06}). An extensive collection of rigorous results can be found in Talagrand's book \cite{talagrand03}, a new edition of which is in preparation.

A natural variant of the SK model is the $p$-spin SK model, where the energy of a configuration is defined as
\begin{equation}\label{pspin}
H_{n,p}(\bos) = -\frac{1}{n^{(p-1)/2}}\sum_{1\le i_1,i_2,\ldots, i_p\le n} g_{i_1i_2\cdots i_p} \sigma_{i_1}\sigma_{i_2}\cdots\sigma_{i_p},
\end{equation}
where again, $(g_{i_1i_2\cdots i_p})_{1\le i_1,\ldots,i_p\le n}$ is a fixed realization of i.i.d.\ standard Gaussian random variables. (Usually, the sum is taken over distinct $i_1,\ldots,i_p$. We take it over all $i_1,\ldots,i_p$ to avoid certain technical inconveniences.) The $p$-spin model was suggested by Derrida, and subsequently studied by Gross and M\'ezard \cite{grossmezard84} and Gardner \cite{gardner85}. 

A generalized version of the SK model that covers all $p$-spin models was considered by Talagrand in his celebrated paper on the Parisi formula \cite{talagrand06}. It is simply a linear combination of the $p$-spin energies over all $p$ and covers all cases considered till now. Given a sequence of non-negative real numbers $\mathbf{c} = (c_2,c_3,\ldots)$ such that 
\[
\sum_{p=2}^\infty c_p =1, 
\]
define the energy function 
\[
H_{n,\mathbf{c}}(\bos) := \sum_{p=2}^\infty c_p^{1/2} H_{n,p}(\bos),
\]
where $H_{n,p}$ is the $p$-spin energy defined above in \eqref{pspin}. Then the usual SK model corresponds to the sequence $(1,0,0,\ldots)$, and the $p$-spin model corresponds to the sequence that has $1$ at the $p$th position and $0$ elsewhere.

The objective of this section is to analyze the energy landscape of the generalized SK model. In particular, we are interested in the behavior of the ground state, i.e.\ the configuration with minimum energy, and the fluctuations of the energy of the ground state. 

There are two notable conjectures about the ground state of the SK model, both seemingly beyond the reach of current technology. One is that the ground state exhibits chaos (exactly in our sense). A physics proof for chaos was given by McKay, Berger, and Kirkpatrick \cite{mckayetal82}. The second conjecture is about the `fluctuation exponent' of the ground state energy. 
\begin{defn}
The energy of the ground state is said to have fluctuation exponent $\rho$ if it has fluctuations of order $n^{\rho +o(1)}$. 
\end{defn}
\noindent Classical results like Proposition~\ref{varcrude} and Proposition \ref{borell} imply that for the generalized SK model, $\rho \le 1/2$. It is predicted by physicists \cite{kondor83, crisantietal92} that $\rho = 1/6$ in the SK model, although the prediction has not yet been reliably verified by simulations. 

As we know from our Theorem \ref{main}, the two problems are related. Indeed, superconcentration and hence chaos happens if the fluctuation exponent is anything strictly less than $1/2$. The main result of this section is that the fluctuation exponent of the ground state energy in the generalized SK model is at most $3/8$ if $c_p$ decreases to zero sufficiently slowly as $p\ra \infty$. 
\begin{thm}\label{exponent}
Let $I(x) := \frac{1}{2}((1+x)\log (1+x) + (1-x)\log (1-x))$, and $(c_p^*)_{p\ge 2}$ be constants such that for all $x\in (-1,1)$, 
\[
\frac{I(x)}{2\log 2 - I(x)} = \sum_{p=2}^\infty c_p^* x^{p}.
\] 
Then $c_p^*\ge 0$ for all $p$ and $\sum c_p^* = 1$. Suppose $\mathbf{c} = (c_p)_{p\ge 2}$ is any non-negative sequence such that $\sum c_p = 1$, and for all $r\ge 2$,
\begin{equation}\label{major1}
\sum_{p=2}^r c_p \le \sum_{p=2}^r c_p^*.
\end{equation}
Then the ground state energy of the generalized SK model defined by the sequence $\mathbf{c}$ has fluctuation exponent $\le 3/8$, and consequently, the ground state is chaotic. 
\end{thm}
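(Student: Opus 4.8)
The plan is to realize the energy landscape as a Gaussian field and invoke Theorem~\ref{gencorr}, reducing the whole problem to estimating a single entropy-type sum, and then to check that \eqref{major1} is precisely the condition under which that sum stays subexponential. Concretely, set $X_{\bos}:=-H_{n,\mathbf{c}}(\bos)$ on $S:=\Sigma_n$ (replacing $S$ by the quotient $\Sigma_n/\{\pm\bos\}$ when $\mathbf{c}$ is supported on even $p$, so that the non-degeneracy condition \eqref{nondeg} holds; this changes $\log|S|$ only by $\log 2$). Using independence of the arrays $(g_{i_1\cdots i_p})$ over $p$, one computes $\var(X_{\bos})=\sum_p c_p\, n=n$ for every $\bos$, so $\sigma^2=n$ and the hypothesis $R(i,i)=\sigma^2$ of Theorem~\ref{gencorr} is satisfied, and
\[
\cov(X_{\bos},X_{\bos'})=n\,\xi(\rho(\bos,\bos')),
\]
where $\rho(\bos,\bos'):=\frac1n\sum_i\sigma_i\sigma_i'$ is the overlap and $\xi(x):=\sum_{p\ge 2}c_p x^p$. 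Thus $r_{\bos\bos'}=\xi(\rho(\bos,\bos'))$ in the notation of Theorem~\ref{gencorr}, and it remains to bound $\Sigma:=\sum_{\bos,\bos'}(2^n)^{-2/(1+r_{\bos\bos'})}$.

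The main computation groups pairs by their overlap: a configuration at Hamming distance $k$ from $\bos$ has overlap $1-2k/n$, and there are $\binom nk$ of them, so
\[
\Sigma=2^n\sum_{k=0}^n\binom nk (2^n)^{-2/(1+\xi(1-2k/n))}.
\]
Here I would use the entropy bound $\binom nk\le e^{nH(k/n)}$ together with the elementary identity $H((1-x)/2)=\log 2-I(x)$ (direct from the definitions of $H$ and $I$), which turns the $k$-th summand, after absorbing the outer factor $2^n$, into at most $2^{\,n\Psi(1-2k/n)}$ with
\[
\Psi(x):=2-\frac{I(x)}{\log 2}-\frac{2}{1+\xi(x)}.
\]
Since $I(x)\le I(1)=\log 2<2\log 2$, a one-line rearrangement shows that $\Psi(x)\le 0$ is equivalent to $\xi(x)\le \frac{I(x)}{2\log 2-I(x)}=\sum_{p\ge 2}c_p^* x^p=:\xi^*(x)$. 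Hence, if $\xi\le\xi^*$ on $[-1,1]$, then $\Sigma\le(n+1)\,2^{\,n\max_x\Psi(x)}\le n+1$.

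The inequality $\xi\le\xi^*$ on $[-1,1]$ is where \eqref{major1} enters, and I expect this to be the delicate step. For $y\in[0,1]$, Abel summation gives $\xi^*(y)-\xi(y)=(1-y)\sum_{p\ge 2}A_p y^p$ with $A_p:=\sum_{q=2}^p(c_q^*-c_q)$; since $\sum_{q\ge 2}(c_q^*-c_q)=1-1=0$ and $A_p\ge 0$ by \eqref{major1}, this is $\ge 0$. For $x<0$ I would use that $I$ is even, so $c_p^*=0$ for odd $p$ and $\xi^*$ is even, while $\xi(x)\le\xi(|x|)$ for every $x$; combining, $\xi(x)\le\xi(|x|)\le\xi^*(|x|)=\xi^*(x)$. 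The same bookkeeping records the side claims of the statement: $\sum_p c_p^*=\xi^*(1)=\frac{I(1)}{2\log 2-I(1)}=1$, and $c_p^*\ge 0$ because $\xi^*=\sum_{j\ge 1}(I/(2\log 2))^j$ and $I(x)=\sum_{m\ge 1}\frac{x^{2m}}{2m(2m-1)}$ has non-negative Taylor coefficients.

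Finally, plugging $\Sigma\le n+1$ and $|S|=2^n$ into Theorem~\ref{gencorr} yields $\beta=\bigl(\frac{\log\log 2^n+\log(n+1)}{n\log 2}\bigr)^{1/4}=n^{-1/4+o(1)}$, hence $v\le C\sigma^2\beta=n^{3/4+o(1)}$. The ground state energy equals $-M$, so its variance is $v$; therefore it fluctuates on the scale $n^{3/8+o(1)}$, i.e.\ with fluctuation exponent $\le 3/8$, and since $v=o(n)=o(\sigma^2)$ the sequence of fields is superconcentrated, hence chaotic by Theorem~\ref{main}. The covariance computation and the use of Theorem~\ref{gencorr} are routine; the crux is the entropy/rate-function identity $H((1-x)/2)=\log 2-I(x)$, which makes $\sum c_p^* x^p=\frac{I(x)}{2\log 2-I(x)}$ exactly the threshold keeping $\Sigma$ polynomial, and then upgrading the cumulative domination \eqref{major1} to the pointwise bound $\xi\le\xi^*$ on all of $[-1,1]$.
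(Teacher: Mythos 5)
Your proof is correct and follows essentially the same route as the paper: it reduces to Theorem~\ref{gencorr}, bounds the entropy sum via $\binom{n}{k}\le e^{nH(k/n)}$ together with the identity $H((1-x)/2)=\log 2-I(x)$, and upgrades the cumulative condition~\eqref{major1} to the pointwise bound $\xi\le\xi^*$ by Abel summation (with evenness of $I$ handling $x<0$). The paper packages the intermediate step as Theorem~\ref{spinchaos} with a slightly different normalization ($X_\bos=-n^{-1/2}H_{n,\mathbf c}(\bos)$ so $\sigma^2=1$), but the computations are the same; your explicit treatment of the degeneracy issue via quotienting by $\pm\bos$ is a point the paper leaves implicit.
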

\noindent Incidentally, it was shown by Wehr and Aizenman \cite{aizenmanwehr90} that the lattice spin glass (i.e.\ the Edwards-Anderson model) is not superconcentrated, and hence, not chaotic in our sense. Therefore, if we are looking for chaos in spin glasses, the only option is to look in mean-field models. 

The proof of Theorem \ref{exponent} is based on our result about extremal Gaussian fields, namely, Theorem \ref{extreme}. The minorizing condition \eqref{major1} suffices to guarantee extremality of the energy landscape considered as a Gaussian field.  We will actually prove a more general version of Theorem \ref{exponent}, with precise quantitative bounds. 
Fix $n$, and consider a centered Gaussian field $\bbx = (X_{\bos})_{\bos \in \{-1,1\}^n}$ satisfying
\[
\cov(X_{\bos}, X_{\bos'}) = \xi\biggl(\frac{\bos \cdot \bos'}{n}\biggr) \ \text{ for all } \bos \in \{-1,1\}^n,
\] 
where $\bos \cdot \bos' = \sum_{i=1}^n \sigma_i \sigma_i'$ is the usual inner product, and $\xi:[-1,1]\ra [-1,1]$ is a function with $\xi(1)=1$. Let $\bbx'$ be an independent copy of $\bbx$. As usual, define the perturbed field
\[
\bbx^t = \est \bbx + \esst \bbx'.
\]
Let $\hat{\bos}^t = \argmax_\bos X^t_\bos$. The following theorem is a quantitative version of Theorem \ref{exponent}.
\begin{thm}\label{spinchaos}
Let $I(x)$ be as in Theorem \ref{exponent}. Suppose that
\begin{equation}\label{major2}
|\xi(x)|\le \xi(|x|) \ \text{ and } \ \xi(x) \le \frac{I(x)}{2\log 2 - I(x)} \ \ \text{for all } \ x\in (-1,1).
\end{equation}
Then we have the bounds
\begin{align*}
\var(\max_\bos X_\bos) &\le C\biggl(\frac{\log n}{n}\biggr)^{1/4},  \ \ \text{and} \\ 
 \ee\biggl(\xi\biggl(\frac{\hat{\bos}^0\cdot\hat{\bos}^t}{n}\biggr)\biggr)  &\le \frac{C}{\esst}\biggl(\frac{\log n}{n }\biggr)^{1/4},
\end{align*}
where $C$ is a universal constant. 
\end{thm}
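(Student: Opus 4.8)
\emph{Proof proposal.} The plan is to read off both bounds from Theorem~\ref{gencorr}. Take $S=\{-1,1\}^n$, so $|S|=2^n$ and $\log|S|=n\log 2$, and observe that $R(\bos,\bos')=\xi(\bos\cdot\bos'/n)$ forces $R(\bos,\bos)=\xi(1)=1$ for every $\bos$; hence $\sigma^2=1$ and the hypothesis $R(i,i)=\sigma^2$ of Theorem~\ref{gencorr} holds. Also $I^0=\hat\bos^0$, $I^t=\hat\bos^t$, and $R(I^0,I^t)=\xi(\hat\bos^0\cdot\hat\bos^t/n)$, so the two inequalities to be proved are \emph{exactly} the conclusions $v\le C\sigma^2\beta$ and $\ee(R(I^0,I^t))\le C\sigma^2\beta/\esst$ of Theorem~\ref{gencorr}, provided one shows that the quantity $\beta$ appearing there satisfies $\beta\le C(\log n/n)^{1/4}$.

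With $r_{\bos\bos'}:=\xi(\bos\cdot\bos'/n)$, Theorem~\ref{gencorr} defines
\[
\beta=\biggl(\frac{\log\log 2^n+\log\Sigma}{n\log 2}\biggr)^{1/4},\qquad \Sigma:=\sum_{\bos,\bos'\in S}2^{-2n/(1+r_{\bos\bos'})}.
\]
Since $\log\log 2^n=\log(n\log 2)\le\log n$ for large $n$, it suffices to prove $\Sigma\le n+1$, for then $\beta\le\bigl((\log n+\log(n+1))/(n\log 2)\bigr)^{1/4}\le C(\log n/n)^{1/4}$. By symmetry of the hypercube, for each fixed $\bos$ there are exactly $\binom{n}{k}$ choices of $\bos'$ with $\bos\cdot\bos'=n-2k$, so, writing $x_k:=1-2k/n\in[-1,1]$,
\[
\Sigma=2^n\sum_{k=0}^n\binom{n}{k}\,2^{-2n/(1+\xi(x_k))}.
\]
Now invoke the entropy estimate $\binom{n}{k}\le e^{n\widetilde H(k/n)}$ with $\widetilde H(p)=-p\log p-(1-p)\log(1-p)$, together with the identity $\widetilde H((1-x)/2)=\log 2-I(x)$; this bounds $\binom{n}{k}\le e^{n(\log 2-I(x_k))}$, so the $k$-th summand of $\Sigma$ is at most $\exp\!\bigl(n\bigl[2\log 2-I(x_k)-\tfrac{2\log 2}{1+\xi(x_k)}\bigr]\bigr)$.

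Here is where hypothesis~\eqref{major2} enters, and it is precisely calibrated: since $\xi$ is a correlation we have $1+\xi(x)\ge 0$, and since $0\le I(x)\le\log 2$ on $[-1,1]$ we have $2\log 2-I(x)\ge\log 2>0$, so multiplying $\xi(x)\le I(x)/(2\log 2-I(x))$ through by these positive quantities is equivalent to $\tfrac{2\log 2}{1+\xi(x)}\ge 2\log 2-I(x)$. Hence every summand with $1\le k\le n-1$ is $\le 1$; the endpoint terms are handled directly ($k=0$: $\binom{n}{0}=1$, $\xi(1)=1$, giving $2^n\cdot2^{-n}=1$; $k=n$: $\binom{n}{n}=1$ and $\xi(-1)\ge-1$, so the term is $\le2^n\cdot2^{-n}=1$, or $0$ if $\xi(-1)=-1$). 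Therefore $\Sigma\le n+1$, which yields $\beta\le C(\log n/n)^{1/4}$ and completes the proof through Theorem~\ref{gencorr}.

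The main — essentially the only — point is the bookkeeping of the last two paragraphs: one must see that the sum $\Sigma$ defining $\beta$ in Theorem~\ref{gencorr} is the second-moment sum counting near-maximal configurations, that the number of spin configurations at Hamming distance $k$ grows like $e^{n(\log 2-I(x_k))+o(n)}$ by the entropy count, and that this exponential rate plus the two-point contribution $2^{-2n/(1+\xi(x))}$ cancels $|S|^2=e^{2n\log 2}$ exactly when $\xi(x)\le I(x)/(2\log 2-I(x))$. Everything quantitative about Gaussian maxima — the Mills-ratio lower bound, the Paley--Zygmund step, and the use of Proposition~\ref{borell} to pass from a polynomial lower bound on $\pp(M\ge\sqrt{2\log|S|})$ to a lower bound on $m$ — is already packaged inside Theorems~\ref{gencorr} and~\ref{extreme}, so no further work is needed there; the only mild care is at the degenerate endpoints $x=\pm1$ of the rate function $I$.
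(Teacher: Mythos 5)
Your proposal is correct and follows essentially the same route as the paper's own proof: reduce to Theorem~\ref{gencorr}, count configurations at each overlap via the binomial coefficient, bound $\binom{n}{k}$ by the entropy estimate $e^{n(\log 2 - I(x_k))}$, and use hypothesis~\eqref{major2} to show each summand is at most $1$, giving $\Sigma \le Cn$. The only difference is cosmetic (parametrizing by Hamming distance $k$ rather than by $\sum_i\sigma_i$, and treating the endpoints $k=0,n$ explicitly, which the paper handles implicitly via the convention $I(\pm1)=\log 2$).
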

\noindent Before proving Theorem \ref{spinchaos}, let us first show that it implies Theorem \ref{exponent}. First of all, if we define $X_\bos =  - n^{-1/2}H_{n,\mathbf{c}}(\bos)$, then
\[
\cov(X_\bos, X_{\bos'}) = \sum_{p=2}^\infty c_p \biggl(\frac{\bos\cdot\bos'}{n}\biggr)^p.
\]
Thus, we are in the setting of Theorem \ref{spinchaos} with $\xi(x) = \sum c_p x^p$. So if we can show that \eqref{major2} follows from \eqref{major1}, then Theorem \ref{spinchaos} would imply that $\var(\max_\bos X_\bos) \le n^{-1/4 + o(1)}$ and hence that 
\[
\var(\min_\bos H_{n,\mathbf{c}}(\bos)) \le n^{3/4 + o(1)},
\]
which proves the claim. The implication of \eqref{major2} from \eqref{major1} is proved as follows. First, it is easy  to verify that the power series for $I(x)$ has non-negative coefficients, and therefore so does
\[
\frac{I(x)}{2\log 2 - I(x)} = \sum_{k=1}^\infty \biggl(\frac{I(x)}{2\log 2}\biggr)^k.
\] 
For each $r$, let $C_r = \sum_{p=2}^r c_p$,  and $C^*_r = \sum_{p=2}^r c^*_p$, with $C_1=C^*_1=0$. The assumption \eqref{major1} says that $C_r \le C^*_r$ for each~$r$. Thus for any $x\in (0,1)$,
\begin{align*}
\xi(x) = \sum_{p=2}^\infty c_p x^p &= \sum_{p=2}^\infty (C_p - C_{p-1}) x^p\\
&= \sum_{p=2}^\infty (x^p - x^{p+1}) C_p\\
&\le \sum_{p=2}^\infty (x^p-x^{p+1}) C^*_p = \sum_{p=2}^\infty c_p^* x^p = \frac{I(x)}{2\log 2- I(x)}.
\end{align*}
Since $|\xi(x)|\le \xi(|x|)$ and $I$ is symmetric, the inequality holds for $x\in (-1,0]$ as well. This completes the argument for Theorem \ref{exponent}.
\begin{proof}[Proof of Theorem \ref{spinchaos}]
We use Theorem \ref{gencorr}. 
Let ${\bf 1}$ denote the configuration of all $1$'s. Then by symmetry, we have
\begin{align*}
\sum_{\bos,\bos'\in \{-1,1\}^n}2^{-2n/(1+\xi(\frac{\bos\cdot\bos'}{n}))}  &= 2^n\sum_{\bos\in \{-1,1\}^n}\exp\biggl(-\frac{2n\log 2}{1+\xi\bigl(\frac{{\bf 1}\cdot\bos}{n}\bigr)}\biggr).
\end{align*}
By the binomial theorem, we know that the number of configurations that have $\sum_{i=1}^n \sigma_i = k$ is exactly
\[
{n \choose \frac{n+k}{2}},
\]
which is interpreted as zero if $k$ and $n$ have different parity. Now, we have that for any $p\in [0,1]$,
\[
{n \choose \frac{n+k}{2}} p^{(n+k)/2}(1-p)^{(n-k)/2} \le 1.
\]
Taking $p = (n+k)/2n$, we get 
\[
{n \choose \frac{n+k}{2}}\le  e^{n(\log 2 - I(k/n))},
\]
where $I(x)$ is defined in the statement of the theorem. Again, the hypothesis implies that
\[
\frac{2\log 2}{1+\xi(x)} \ge 2\log 2 - I(x)\ \ \text{for all } \ x\in [-1,1],
\]
Thus, we have
\begin{align*}
\sum_{\bos,\bos'\in \{-1,1\}^n}2^{-2n/(1+\xi(\frac{\bos\cdot\bos'}{n}))}  &\le  2^n\sum_{k=-n}^n\exp\biggl(-\frac{2n\log 2}{1+\xi(k/n)}\biggr) {n \choose \frac{n+k}{2}}\\
&\le C n. 
\end{align*}
The proof now follows from Theorem \ref{gencorr}.
\end{proof}

\section{Example: The Discrete Gaussian Free Field}\label{dgff}
In this section, we show that the Discrete Gaussian Free Field (DGFF) on an $n\times n$ grid (defined below) is a superconcentrated Gaussian field. The massless Gaussian free field is an important mathematical object, inspiring a substantial amount of rigorous literature. It is essentially a higher dimensional analog of Brownian motion, where the dimension of time (rather than space) is higher than one. Although initially introduced as a toy model for the Ising interface, the topic has grown in its own right and has found important intersections with subjects as diverse as quantum gravity and stochastic Loewner evolutions. The DGFF is a finite approximation to the massless free field, just as random walk is a finite approximation of  Brownian motion. For further motivation, definitions, and a review of the rigorous literature, we refer to Giacomin \cite{giacomin00} and the excellent recent survey of Sheffield \cite{sheffield07}.  


\subsection{Zero boundary condition} Let $V_n := \{0, \ldots, n-1\}^2$, and $\partial V_n$ be the inner boundary, that is, the points in $V_n$ which have a nearest neighbor outside. Let int$(V_n) := V_n\backslash \partial V_n$. The two-dimensional discrete Gaussian free field on $V_n$ with zero boundary condition  is defined as a family $\Phi_n = \{\phi_x\}_{x\in V_n}$ of centered gaussian random variables with covariances given by the discrete Green's function of the 
(discrete) Laplacian on int$(V_n)$. This means, explicitly, that  $\phi_x \equiv 0$ for $x\in \partial V_n$, and 
\begin{equation}\label{greencov}
\cov(\phi_x, \phi_y) = G_n(x,y) = \ee_x\biggl(\sum_{i=0}^{\tau_{\partial V_n}} \ii_{\{\eta_i = y\}}\biggr), \ \ \ x,y\in \mathrm{int}(V_n),
\end{equation}
where $\{\eta_i\}_{i\ge 0}$ is a standard symmetric nearest neighbor random walk on the 
two-dimensional lattice $\mathbb{Z}^2$, starting at $x$, and $\tau_{\partial V_n}$ is the first entrance time in $\partial V_n$. The law of $\Phi_n$ is the Gaussian distribution with density function proportional to
\begin{equation}\label{ffdens}
\exp\biggl( - \frac{1}{8}\sum_{x\sim y} (\phi_x - \phi_y)^2\biggr),
\end{equation}
where $x\sim y$ means $x$ and $y$ are neighbors in $V_n$ (each pair counted once), with the understanding that we set $\phi_x \equiv 0$ for $x\in \partial V_n$ in the above formula. 
This can be seen as follows. Fix $y\in \mathrm{int}(V_n)$, and for each $x\in \mathrm{int}(V_n)$ let $f(x)$ and $g(x)$ denote the left and right sides of \eqref{greencov}. Using \eqref{ffdens} it is easy to show that for any $x$,
\[
\ee(\phi_x|(\phi_z)_{z\ne x}) = \frac{1}{4}\sum_{z\in V_n, \; z\sim x} \phi_z.
\]
It follows that the function $f$ is discrete harmonic on $\mathrm{int}(V_n)\backslash\{y\}$, that is, 
\[
f(x) = \ee(\ee(\phi_x|(\phi_z)_{z\ne x}) \phi_y) = \frac{1}{4}\sum_{z\in V_n, \; z\sim x} f(z).
\]
Similarly, it is easy to show that $g(x)$ is discrete harmonic on $\mathrm{int}(V_n)\backslash\{y\}$ by conditioning on $\eta_1$. Again, putting
\[
\bar{\phi}_y = \ee(\phi_y|(\phi_z)_{z\ne y}) = \frac{1}{4}\sum_{z\in V_n, \; z\sim y}\phi_z,
\]
we have
\begin{align*}
f(y) - \frac{1}{4}\sum_{z\in V_n, \; z\sim y} f(z) &= \ee((\phi_y - \bar{\phi}_y)\phi_y)\\
&= \ee((\phi_y - \bar{\phi}_y)^2) + \ee((\phi_y - \bar{\phi}_y) \bar{\phi}_y) \\
&=  \ee(\var(\phi_y|(\phi_z)_{z\ne y})) + 0  \\
&= 1. 
\end{align*}
Similarly, conditioning on $\eta_1$, we can show 
\[
g(y) - \frac{1}{4}\sum_{z\in V_n, \; z\sim y} g(z) = 1.
\]
Thus, $f-g$ is discrete harmonic on $\mathrm{int}(V_n)$. But $f  = g = 0$ on $\partial V_n$. Thus we must have $f=g$ everywhere on $V_n$, which proves that the density \eqref{ffdens} indeed corresponds to the Gaussian field with covariance \eqref{greencov}.

It was shown by Bolthausen, Deuschel, and Giacomin (\cite{bdg01}, Lemma 1) that  as $n \ra \infty$, we have
\[
\max_{y\in V_n} \var(\phi_y) = \frac{2}{\pi}\log n + O(1).
\]
The unexpected and surprising fact, also in the same paper (\cite{bdg01}, Theorem~2), is that as $n \ra \infty$,
\[
\ee(\max_{y\in V_n}\phi_y) \sim 2\sqrt{\frac{2}{\pi}} \log n,
\]
exactly as if $\{\phi_y\}_{y\in V_n}$ were independent. Here $a_n \sim b_n$ means, as usual, that $\lim_{n\ra \infty} a_n/b_n = 1$. In our terminology, the DGFF with zero boundary condition is extremal.  Combined with Theorem \ref{extreme}, a direct consequence is the following result.
\begin{prop}
The discrete Gaussian free field on an $n \times n$ grid with zero boundary condition is superconcentrated (as $n\ra\infty$), meaning that
\[
\var(\max_{y\in V_n}\phi_y) = o(\log n).
\]
Consequently, the two dimensional DGFF is chaotic and has the multiple peaks property.
\end{prop}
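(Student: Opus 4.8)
The plan is to obtain the proposition as a direct application of Theorem~\ref{extreme} (equivalently Theorem~\ref{main2}) together with the positivity part of Theorem~\ref{main}, after reading off the relevant parameters of the DGFF from the quoted results of Bolthausen, Deuschel, and Giacomin~\cite{bdg01}. There is one bookkeeping nuisance to dispose of at the outset: the non-degeneracy condition \eqref{nondeg} fails verbatim on $V_n$ because $\phi_x \equiv 0$ for every $x\in\partial V_n$. I would therefore work with the Gaussian field $\bbx_n := (\phi_x)_{x\in\mathrm{int}(V_n)}$; its covariance is $G_n$ restricted to $\mathrm{int}(V_n)$, which equals $(I-P_{\mathrm{int}})^{-1}$ for the symmetric substochastic transition matrix $P_{\mathrm{int}}$ of the killed walk, hence is symmetric positive definite since $\mathrm{int}(V_n)$ is connected, so \eqref{nondeg} holds. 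Moreover, for large $n$ the maximum of $\phi$ over $\mathrm{int}(V_n)$ is of order $\log n$, and in particular positive with overwhelming probability, so it agrees with the maximum over all of $V_n$; thus none of the asymptotics below are affected by this restriction. For this field, $|S_n| = |\mathrm{int}(V_n)| = (n-2)^2$, so $\log|S_n| = 2\log n + o(1)$, while $\sigma_n^2 = \max_x \var(\phi_x) = \tfrac{2}{\pi}\log n + O(1)$ and $m_n = \ee(\max_x \phi_x) = 2\sqrt{\tfrac{2}{\pi}}\,\log n\,(1+o(1))$.

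First I would verify extremality. Since
\[
\sqrt{2\sigma_n^2\log|S_n|} = \sqrt{2\Bigl(\tfrac{2}{\pi}\log n + O(1)\Bigr)\bigl(2\log n + o(1)\bigr)} = 2\sqrt{\tfrac{2}{\pi}}\,\log n\,(1+o(1)),
\]
the ratio $\alpha_n := m_n/\sqrt{2\sigma_n^2\log|S_n|}$ tends to $1$, i.e.\ $(\bbx_n)_{n\ge 1}$ is extremal in the sense of Definition~\ref{extremaldef}. Note that only $\alpha_n\to 1$ is needed — no rate — so the $O(1)$ correction in $\sigma_n^2$ and the $o(\log n)$ error implicit in the asymptotics for $m_n$ are immaterial; all that matters is that $m_n$ and $\sqrt{2\sigma_n^2\log|S_n|}$ have the same leading constant on the $\log n$ scale.

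Next I would invoke Theorem~\ref{extreme}. With $\beta_n := \sqrt{1-\alpha_n} + \bigl(\log\log|S_n|/\log|S_n|\bigr)^{1/4} \to 0$, the theorem yields $v_n = \var(\max_x \phi_x) \le C\sigma_n^2\beta_n = o(\sigma_n^2) = o(\log n)$, which is exactly the asserted superconcentration. Chaos then follows immediately from the equivalence Superconcentration $\iff$ Chaos in Theorem~\ref{main} (alternatively it follows at once from Theorem~\ref{main2}, or the chaos bound can be read directly off the second inequality of Theorem~\ref{extreme}). Finally, for the multiple peaks property I would use that the covariance of the DGFF is the discrete Green's function $G_n(x,y) = \ee_x\bigl(\sum_{i=0}^{\tau_{\partial V_n}} \ii_{\{\eta_i = y\}}\bigr) \ge 0$, so the positivity hypothesis of Theorem~\ref{main} is met and Chaos $\Rightarrow$ MP, completing the proof.

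There is essentially no analytic obstacle here: the entire substance of the argument is already contained in the extremality of the DGFF maximum established in~\cite{bdg01} and in Theorem~\ref{extreme}. The only points requiring care are the elementary computation that $\alpha_n\to 1$ and the non-degeneracy/boundary bookkeeping sketched in the first paragraph.
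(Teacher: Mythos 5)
Your proof is correct and follows essentially the same route as the paper: read off extremality from the Bolthausen--Deuschel--Giacomin asymptotics for $\sigma_n^2$ and $m_n$, apply Theorem~\ref{extreme} (equivalently Theorem~\ref{main2}) for superconcentration and chaos, and use positivity of the Green's function for multiple peaks. The careful treatment of the degenerate boundary coordinates is a bookkeeping detail the paper leaves implicit, but it does not change the argument.
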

\noindent  An immediate comment is that we cannot give a bound on the variance better than $o(\log n)$. This is because the result about the asymptotic behavior of $\ee(\max \phi_y)$ available from \cite{bdg01} does not include any explicit rate of convergence, which makes it impossible for us to get a more informative bound from Theorem \ref{extreme}. 

We next consider the free field on an $n\times n$ torus, where we can take advantage of the symmetry to prove that $\var(\max\phi(y)) = O(\log \log n)$.  

\subsection{DGFF on a torus}
Let $\mathbb{T}_n$ be the set $\{0, \ldots, n-1\}^2$  endowed with the graph structure of a torus, that it, $(a,b)$ and $(c,d)$ are adjacent if $a-c \equiv \pm 1 \pmod n$ and $b-d \equiv \pm 1\pmod n$. We wish to define a Gaussian free field on this graph. However, the graph has no natural boundary. The easiest (and perhaps the most natural) way to overcome this problem is to modify the definition~\eqref{greencov} of the covariance by replacing the stopping time $\tau_{\partial V_n}$ with a random time $\tau$ that is of the same order of magnitude as $\tau_{\partial V_n}$, but is independent of all else. Specifically, we prescribe 
\begin{equation}\label{green}
\cov(\phi_x, \phi_y) = \ee_x\biggl(\sum_{i=0}^\tau \ii_{\{\eta_i = y\}}\biggr),
\end{equation}
where $(\eta_i)_{i\ge 0}$ is a simple random walk on the torus started at $x$, and $\tau$ is a random variable independent of $(\eta_i)_{i\ge 0}$, which we take to be Geometrically distributed with mean $n^2$. The reason for the particular choice of the Geometric distribution is that it translates into a simple modification of the density \eqref{ffdens} by the introduction of a small `mass'; the new density function turns out to be 
\begin{equation}\label{ffdens2}
\exp\biggl( - \frac{(1-\frac{1}{n^2})}{8}\sum_{x\sim y} (\phi_x - \phi_y)^2 - \frac{1}{2n^2} \sum_x \phi_x^2\biggr).
\end{equation}
Here $x\sim y$ means that $x$ and $y$ are neighbors on the torus. To show that this density indeed corresponds to that of a centered Gaussian field with covariance \eqref{green}, we proceed as in the case of the DGFF with zero boundary condition. Fix $y\in \ttb_n$, and let $f(x)$ and $g(x)$ be the two sides of \eqref{green}. From \eqref{green} and  \eqref{ffdens2}, one can check using conditional expectations that for $x\ne y$,
\[
f(x) = \frac{n^2-1}{4n^2}\sum_{z\in V_n, \; z\sim x} f(z), \ \ \  g(x) = \frac{n^2-1}{4n^2}\sum_{z\in V_n, \; z\sim x} g(z).
\]
(The second identity holds because conditional on $\tau\ge1$, $\tau-1$ has the same distribution as $\tau$. This is where we use that $\tau$ has a Geometric law.) Again, using similar computations as before, it can be checked that
\begin{align*}
f(y) &= 1 + \frac{n^2-1}{4n^2}\sum_{z\in V_n, \; z\sim x} f(z), \\
g(y) &= 1 + \frac{n^2-1}{4n^2}\sum_{z\in V_n, \; z\sim y} g(z).
\end{align*}
Combining the last two displays, one can conclude that $|f-g|$ is a nonnegative strictly subharmonic function on $\ttb_n$, which implies that it must be zero everywhere.

Although we assign a small mass in our definition of the DGFF on the torus, we can still call it a `massless free field' in an asymptotic sense because the stopping times $\tau_{\partial V_n}$ and $\tau$ are both of order $n^2$ and it is not difficult to show that the covariances in the two models differ by $O(1)$.

Let us now state the main result of this section, which shows that the DGFF on the torus is superconcentrated, with an explicit bound of order $\log \log n$ on the variance of the maximum. 
\begin{thm}\label{torusvar}
Let $(\phi_x)_{x\in \ttb_n}$ be the DGFF on the torus defined above. For some universal constant $C$, we have
\[
\var(\max_{x\in \ttb_n} \phi_x) \le C\log \log n.
\]
\end{thm}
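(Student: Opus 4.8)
The plan is to apply Theorem~\ref{hyper2} with a well-chosen family of coverings $\mc(r)$, taking advantage of the translation invariance that the torus field enjoys but the zero-boundary field does not. Since the DGFF on $\ttb_n$ defined by \eqref{green} is built from the translation-invariant simple random walk and an independent killing time, its covariance $G_n(x,y):=\cov(\phi_x,\phi_y)$ depends only on $x-y$. In particular $\sigma^2 = G_n(x,x)$ is the same for every $x$ and equals $\tfrac{2}{\pi}\log n + O(1)$ (it differs from the zero-boundary variance by $O(1)$), and, using the a.s.\ uniqueness of the argmax, translation invariance forces $\pp(I^0 = x) = n^{-2}$ for every $x\in\ttb_n$. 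Hence for any $A\subseteq\ttb_n$ we have the clean identity $p(A) = |A|/n^2$, and this is exactly the ingredient that will convert the abstract bound of Theorem~\ref{hyper2} into an explicit $\log\log n$ estimate.

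The first substantive step is a standard upper bound on the Green's function: there is a universal constant $C_1$ such that
\[
G_n(x,y) \le C_1\log\frac{n}{1\vee d(x,y)} + C_1,
\]
where $d$ denotes graph distance on $\ttb_n$. This can be obtained from the identity $G_n(x,y) = \sigma^2\,\pp_x(\tau_y < \tau)$ together with the classical two-dimensional estimate for the probability that the walk hits a prescribed site before the independent geometric killing time of mean $n^2$; alternatively one reads it off the Fourier representation of $G_n$ on the torus, where $1-\lambda_k \asymp |k/n|^2$ produces the logarithm. Given this, setting $\ell(r) := n\,e^{1 - r/C_1}$ we have the implication $G_n(x,y)\ge r \implies d(x,y)\le \ell(r)$, which is precisely the hypothesis needed to feed a covering of mesh $\ell(r)$ into Theorem~\ref{hyper2}.

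Next I construct the coverings. For $r$ so small that $\ell(r)\ge n/3$, take $\mc(r) = \{\ttb_n\}$; then $\rho(r)=\mu(r)=1$ and the integrand of Theorem~\ref{hyper2} is $2$ by the stated convention. For larger $r$ let $L:=\lceil\ell(r)\rceil < n/3$ and cover $\ttb_n$ by the $\asymp (n/L)^2$ ``fat'' boxes of side $3L$ obtained by sliding a $3L\times 3L$ window in steps of $L$ in each coordinate direction (wrapping around the torus). Any two points at torus-distance at most $L$ lie in a common such box, so $\mc(r)$ is admissible, while each point lies in $O(1)$ of the boxes, whence $\mu(r) = O(1)$; and since each box $D$ has $p(D) = |D|/n^2 \le 9L^2/n^2$, we get $\rho(r) \le C_2(\ell(r)/n)^2 = C_2\,e^{2 - 2r/C_1}$ for a universal $C_2\ge 1$. (Minor irregularities at the ``seam'' of the wrapped grid only inflate the $O(1)$ constants.)

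Finally I substitute into Theorem~\ref{hyper2}. Choose a universal constant $r_0$ large enough that $-\log\rho(r)\ge r/C_1$ and $\rho(r)<1/4$ for all $r\ge r_0$ (possible since $-\log\rho(r)\ge 2r/C_1 - 2 - \log C_2$). Using the elementary inequality $(1-\rho)/(-\log\rho)\le 1$ for $\rho\in(0,1]$, we bound the integrand by $2\mu(r) = O(1)$ on $[0,r_0]$ and by $2\mu(r)/(-\log\rho(r)) \le C/r$ on $[r_0,\sigma^2]$, so that
\[
\var\Bigl(\max_{x\in\ttb_n}\phi_x\Bigr) \;\le\; C r_0 + C\int_{r_0}^{\sigma^2}\frac{dr}{r} \;=\; C r_0 + C\log\frac{\sigma^2}{r_0} \;=\; O(\log\sigma^2) \;=\; O(\log\log n),
\]
using $\sigma^2 = \tfrac{2}{\pi}\log n + O(1)$. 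I expect the only genuine obstacle to be the Green's-function upper bound in the second paragraph: it is classical but needs either a careful potential-theoretic hitting estimate for the killed walk or an honest Fourier computation on the torus, with some attention paid near the diagonal $d(x,y)=O(1)$, where $G_n\asymp\log n$, and near $d(x,y)\asymp n$, where $G_n = O(1)$. The rest --- the box construction with bounded overlap and the convexity inequality for $(1-\rho)/(-\log\rho)$ --- is routine.
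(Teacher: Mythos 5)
Your argument is essentially the one in the paper: both rely on translation invariance to get $\pp(I^0=x)=n^{-2}$, the same logarithmic Green's-function upper bound to convert $G_n(x,y)\ge r$ into a distance constraint of order $ne^{-r/C}$, a bounded-overlap covering at that scale (the paper uses $2s$-balls around an $s$-net where you use sliding $3L\times 3L$ boxes, which is the same thing up to constants), and then the integral from Theorem~\ref{hyper2} to land on $O(\log\sigma^2)=O(\log\log n)$. The Green's-function estimate you defer is exactly what the paper supplies via its Lemmas~\ref{binom}, \ref{return}, and \ref{toruscov}, so there is no real gap.
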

\noindent The proof of this result is via the use of hypercontractivity, more specifically Theorem \ref{hyper2}. The key advantage in the torus model is that we know that the location of the maximum is uniformly distributed. In the zero boundary situation, we had very little information about the location of the maximum.

Let us now proceed to prove Theorem \ref{torusvar}. The first step is a basic observation about the simple symmetric random walk on $\zz$.
\begin{lmm}\label{binom}
Suppose $(\alpha_i)_{i\ge 0}$ is a simple symmetric random walk on $\zz$, starting at $0$. Then for any $k\in \zz$ and $i\ge 1$, we have
\[
\pp(\alpha_i = k) \le \frac{Ce^{-k^2/4i}}{\sqrt{i}}
\]
where $C$ is a universal constant. 
\end{lmm}
\begin{proof}
If $|k|> i$ or $k \not \equiv i\pmod 2$, then $\pp(\alpha_i = k) = 0$ and we have nothing to prove. If $|k|=i$, we have $\pp(\alpha_i = k) = 2^{-i}$, which is consistent with the statement of the lemma. In all other cases, 
\[
\pp(\alpha_i = k) = {i \choose \frac{i+k}{2}} 2^{-i}. 
\]
Using the Stirling approximation, we get
\begin{align*}
\pp(\alpha_i = k) &\le \frac{C}{\sqrt{i}}\exp\biggl(-\frac{i+k+1}{2}\log \biggl(1+\frac{k}{i}\biggr) - \frac{i-k+1}{2}\log \biggl(1- \frac{k}{i}\biggr)\biggr)\\
&= \frac{C}{\sqrt{i}} \exp\biggl( - i I\biggl(\frac{k}{i}\biggr) - \frac{1}{2}\log \biggl(1-\frac{k^2}{i^2}\biggr)\biggr), 
\end{align*}
where
\[
I(x) = \frac{1+x}{2}\log (1+x) + \frac{1-x}{2}\log (1-x). 
\]
The function $I$ has a power series expansion 
\[
I(x) = \sum_{p=1}^\infty\frac{x^{2p}}{(2p-1)2p}, \ \ x\in (-1,1). 
\]
In particular, $I(x) \ge x^2/2$. This inequality suffices to prove the lemma when, say, $k \le i/2$. On the other hand, if $i/2 < k < i$ and $i$ is so large that $\log i \le i/8$ (which can be assumed by choosing a suitably large $C$), we have 
\[
-\frac{1}{2}\log \biggl(1-\frac{k^2}{i^2}\biggr) \le \frac{\log i}{2}\le \frac{i}{16} \le \frac{k^2}{4i}, 
\]
which implies that 
\[
\pp(\alpha_i = k) \le \frac{2}{\sqrt{2\pi i}} \exp\biggl(- \frac{k^2}{2i} + \frac{k^2}{4i}\biggr).
\]
This completes the proof. 
\end{proof}
We are going to use random walks on $\zz^2$ to produce random walks on~$\ttb_n$. For this purpose, we observe that there is a natural map $Q_n: \zz^2 \ra \ttb_n$ which takes a point $(x_1,x_2)\in \zz^2$ to the unique point $(x_1', x_2')$ in $\ttb_n$ satisfying $x_1\equiv x_1' \pmod n$ and $x_2 \equiv x_2'\pmod n$. For any $x,y\in\ttb_n$, define the toric Euclidean distance $d_n(x,y)$ as:
\[
d_n(x,y) := d(x, Q_n^{-1}(y)),
\]
where $d(x,A)$ is the usual Euclidean distance of a point $x$ from a set $A$. It is not difficult to verify that actually $d_n(x,y) = d(Q_n^{-1}(x), Q_n^{-1}(y))$ and therefore the definition is symmetric in $x$ and $y$.
\begin{lmm}\label{return}
Let $(\eta_i)_{i\ge 0}$ be a simple symmetric random walk on $\ttb_n$. Then for any $x, y\in \ttb_n$ and any $i\ge 1$, we have
\[
\pp_x(\eta_i = y) \le
\begin{cases} 
Ci^{-1}e^{-d_n(x,y)^2/4i} & \text{ if } i\le n^2,\\
Cn^{-2} &\text{ if } i> n^2,
\end{cases}
\] 
where $C$ is a universal constant and $\pp_x$ denotes the law of the random walk starting from $x$. 
\end{lmm}
\begin{proof}
Let $(\beta_i)_{i\ge 0}$ be a simple symmetric random walk on $\zz^2$. Then a random walk on the torus is easily obtained as $\eta_i = Q_n(\beta_i)$. For any $x, y\in \ttb_n$, we have
\begin{equation}\label{tor1}
\pp_x(\eta_i = y) = \sum_{z\in Q_n^{-1}(y)} \pp_x(\beta_i = z). 
\end{equation}
Now, for any $x$ and $z$, Lemma \ref{binom} shows that
\begin{equation}\label{tor2}
\pp_x(\beta_i = z)\le \frac{C}{i}\exp\biggl(-\frac{d(x,z)^2}{4i}\biggr),
\end{equation}
where $d(x,z)$ is the Euclidean distance between $x$ and $z$. Now fix $x,y\in \ttb_n$, and let $z$ be the nearest point to $x$ in $Q_n^{-1}(y)$. Then, if $x = (x_1,x_2)$ and $z = (z_1,z_2)$, we have by \eqref{tor1} and \eqref{tor2} that 
\[
\pp_x(\eta_i = y) \le \frac{C}{i}\sum_{k_1, k_2 \in \zz}\exp\biggl(-\frac{(x_1 - z_1 + k_1 n)^2 + (x_2 - z_2 + k_2 n)^2}{4i}\biggr). 
\]
It is easy to see that $|x_j - z_j|\le n/2$ for $j = 1,2$ (otherwise, we can choose a `better' $z_j$ so that $z$ is closer to $x$.) Thus, for $j=1,2$,
\begin{align*}
(x_j-z_j + k_j n)^2 &= (x_j - z_j)^2 + k_j^2 n^2 + 2(x_j - z_j)k_j n\\
&\ge (x_j - z_j)^2 + |k_j|(|k_j| - 1)n^2. 
\end{align*}
Therefore,
\begin{align*}
\pp_x(\eta_i = y) &\le \frac{C}{i}e^{-d(x,z)^2/4i} \sum_{k_1, k_2 = 0}^\infty \exp\biggl(-\frac{(k_1(k_1-1) + k_2(k_2-1))n^2}{4i}\biggr)\\
&\le \frac{C}{i}e^{-d(x,z)^2/4i}\biggl(1 + \sum_{k=1}^\infty e^{-k^2n^2/4i} + \sum_{r,s = 1}^\infty e^{-(r^2 + s^2) n^2/4i}\biggr).
\end{align*}
Comparing the last two terms with integrals, we get
\begin{align*}
\pp_x(\eta_i = y) &\le \frac{C}{i}e^{-d(x,z)^2/4i}\biggl(1 + \int_\rr e^{-u^2 n^2/4i} du + \int_{\rr^2} e^{-(u^2 + v^2)n^2/4i} \;du \;dv\biggr)\\
&\le \frac{C}{i}e^{-d(x,z)^2/4i}\biggl(1  + \frac{\sqrt{i}}{n} + \frac{i}{n^2}\biggr). 
\end{align*}
It is not difficult to verify by considering the cases $i \le n^2$ and $i > n^2$ that this completes the proof.
\end{proof}
\begin{lmm}\label{toruscov}
For any $x\ne y\in \ttb_n$, we have
\[
0\le \cov(\phi_x, \phi_y) \le C\log \frac{n}{d_n(x,y)} + C.
\]
and $\var(\phi_x)\le C\log n$, where $C$ is a universal constant. 
\end{lmm}
\begin{proof}
From the representation \eqref{green}, we see that the covariances are nonnegative. Now fix two distinct points $x, y\in \ttb_n$, and let $d = d_n(x,y)$. From~\eqref{green}, we have
\begin{align*}
\cov(\phi_x, \phi_y) &= \sum_{i=0}^\infty \pp_x(\eta_i=y)\pp(\tau\ge i)\\
&\le \sum_{i=1}^{n^2}\frac{Ce^{-d^2/4i}}{i} + \sum_{i=n^2}^\infty \frac{C}{n^2}\biggl(1-\frac{1}{n^2}\biggr)^i.
\end{align*}
Clearly, the second sum can be bounded by a constant that does not depend on $n$. For the first, note that by the inequality $e^{-x} \le x^{-1}$ that holds for $x\ge 1$, we have 
\begin{align*}
\sum_{1\le i\le n^2} \frac{Ce^{-d^2/4i}}{i} &=  \sum_{1\le i \le d^2} \frac{Ce^{-d^2/4i}}{i} + \sum_{d^2 < i \le n^2} \frac{Ce^{-d^2/4i}}{i}\\
&\le \sum_{1\le i \le d^2} \frac{C}{d^2} + \sum_{d^2 < i \le n^2} \frac{C}{i}\\
&\le C + C (\log n^2 - \log d^2).
\end{align*}
The bound on the variance follows similarly. This completes the proof. 
\end{proof}
\begin{proof}[Proof of Theorem \ref{torusvar}]
Fix some $r \le C\log n$, where $C$ is the same constant as in Lemma \ref{toruscov}. If $\cov(\phi_x,\phi_y) \ge r$, then by Lemma \ref{toruscov},
\begin{equation}\label{sbd}
d_n(x,y) \le ne^{1-(r/C)} =: s.
\end{equation}
Let $A$ be an $s$-net of $\ttb_n$ for the metric $d_n$ (i.e.\ a set of points that are mutually separated from each other by distance $> s$, and is maximal with respect to this property). Let $\mathcal{C}(r)$ be the collection of all $2s$-balls around the points of $A$. Then by \eqref{sbd} we see that whenever $\cov(\phi_x,\phi_y)\ge r$, we must have that $x,y\in D$ for some $D\in \mathcal{C}(r)$. By symmetry, we see that the probability of the maximum being at any point $z\in \ttb_n$ is exactly $n^{-2}$. Therefore, the probability of the maximum being in any given $D\in \mathcal{C}(r)$ is bounded by $Ks^2/n^2$, where $K$ is a universal constant. Thus, in the terminology of Theorem \ref{hyper2}, 
\[
\rho(r)\le \frac{Ks^2}{n^2} = K'e^{-2r/C},
\]
where $K' = K e^2$. We can assume that $K' > 1$. 
Again, if $x\in \ttb_n$ and $D\in \mathcal{C}(r)$ contains $x$, then the center of $D$ must be at distance $\le 2s$ from $x$. Now, the centers of the members of $\mathcal{C}(r)$ are separated by distance $\ge s$ from each other. Clearly, the maximum number of $s$-separated points in a $2s$-ball can be bounded by a universal constant that does not depend on $n$ and $s$. It follows that the number of members of $D$ that contain $x$ is bounded by a universal constant. Therefore, in the notation of Theorem \ref{hyper}, $\mu(r)$ is bounded by a universal constant. Using the bounds on $\rho(r)$ and $\mu(r)$ in Theorem \ref{hyper2} and the inequality $1-x \le -\log x$ for $x> 0$, we see that for some universal constant $\kappa$, 
\[
\var(\max_x\phi_x)\le C\log K' + \int_{C \log K'}^{C\log n} \frac{\kappa}{(2r/C) - \log K'}\; dr.
\]
It is easy to see that the right hand side is bounded by a constant multiple of $\log \log n$. This completes the proof. 
\end{proof}

\section{Example: Gaussian fields on Euclidean spaces}\label{euclidean}
Consider a stationary centered Gaussian process $(X_n)_{n\ge 0}$. If we have $\cov(X_0, X_n)$ decaying to zero faster than $1/\log n$ as $n \ra \infty$, then it is known at least since the work of Berman \cite{berman64} that $M_n := \max\{X_0,\ldots,X_n\}$ has fluctuations of order $(\log n)^{-1/2}$ and upon proper centering and scaling, converges to the Gumbel distribution in the limit. This result has seen considerable generalizations for one dimensional Gaussian processes, both in discrete  and continuous time. Some examples are \cite{pickands67}, \cite{pickands69}, \cite{pickands69a}, and \cite{mittalylvisaker75}. For a survey of the classical literature, we refer to the book \cite{llr83}. 

The question is considerably harder in dimensions higher than one. A~large number of sophisticated results and techniques for analyzing the behavior of the maxima of higher dimensional {\it smooth} Gaussian fields are now known; see the excellent recent book of Adler and Taylor \cite{adlertaylor07} for a survey. Here `smooth' usually means twice continuously differentiable.  However, in the absence of smoothness, maxima of high dimensional Gaussian fields are still quite intractable. If only the expected size of the maximum is of interest, advanced techniques exist (see the book on chaining by Talagrand~\cite{talagrand05}). The question of fluctuations is much more difficult. In fact, for general (nonsmooth) processes, even the one-dimensional work of Pickands \cite{pickands67, pickands69, pickands69a} is considerably nontrivial. 

One basic question one may ask is the following: what is a sufficient condition for the fluctuation of the maximum in a box of side length $T$ to decrease like $(\log T)^{-1/2}$? In other words, when does the maximum behave as if it were the maximum of a collection of i.i.d.\ Gaussians, one per each unit area in the box? Classical theory \cite{mittalylvisaker75} tells  that this is true for stationary one-dimensional Gaussian processes whenever the correlation between $X_0$ and $X_T$ decreases at least as fast as $1/\log T$. Note that this requirement for the rate of decay is rather mild, considering that it ensures that the maximum behaves just like the maximum of independent variables. 

In the dimensions $\ge 2$, the above question is unresolved. Here questions may also arise about maxima over subsets that are not necessarily boxes. Moreover, what if the correlation decays slower than $1/\log T$? In this section, we attempt to answer these questions. Our achievements are modest: we only have upper bounds on the variances. The issue of limiting distributions is not solved here.  

Let $\bbx = (X_\bu)_{\bu\in \rr^d}$ be a centered Gaussian field on $\rr^d$ with $\ee(X_\bu^2)=1$ for each $\bu$. For any Borel set $A \subseteq \rr^d$, let
\[
M(A) := \sup_{\bu\in A} X_\bu, \ \ m(A) := \ee(M(A)).
\]
For any $\bu\in \rr^d$ and $r> 0$, let $B(\bu,r)$ denote the open ball of radius $r$ and center $\bu$. Assume that
\begin{equation}\label{as1}
L := \sup_{\bu\in\rr^d} m(B(\bu, 1)) < \infty.
\end{equation}
Note that in particular, the above condition is satisfied when the field is stationary and continuous. Next, suppose $\phi:[0,\infty)\ra \rr$ is a decreasing function such that for all $\bu,\mathbf{v}\in \rr^d$,
\[
\cov(X_\bu, X_\mathbf{v}) \le \phi(|\bu-\mathbf{v}|),
\]
where $|\bu-\mathbf{v}|$ denotes the Euclidean distance between $\bu$ and $\mathbf{v}$. Assume that  
\begin{equation}\label{as2}
\lim_{s\ra \infty} \phi(s)=0. 
\end{equation}
For a set $A\subseteq \rr^d$ and $\epsilon > 0$, let $N(A, \epsilon)$ be the maximum number of points that can be found in $A$ such that any two them are separated by distance greater than $\epsilon$ (such a collection is usually called an $\epsilon$-net of $A$). When $\epsilon =1$, we simply write $N(A)$ instead of $N(A,1)$. The following theorem is the main result of this section.
\begin{thm}\label{fieldmax}
Assume \eqref{as1} and \eqref{as2}. Then for any Borel set $A\subseteq \rr^d$ such that $\mathrm{diam}(A) > 1$, we have
\[
\var(M(A))\le C_1(\phi,d) \biggl( \phi(N(A)^{C_2(\phi,d)}) + \frac{1}{\log N(A)}\biggr),
\]
where $C_1(\phi,d)$ and $C_2(\phi,d)$ are constants that depend only on the function $\phi$ and the dimension $d$ (and not on the set $A$), and $N(A)$ is defined above. 
\end{thm}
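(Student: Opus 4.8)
The plan is to apply Theorem~\ref{hyper2}, the hypercontractive variance bound, after discretizing the field and building an appropriate covering. The point of Theorem~\ref{hyper2} is that once we have, for each $r \ge 0$, a covering $\mc(r)$ of the index set by sets of controlled diameter that together capture all pairs with covariance $\ge r$, and once we can bound $\rho(r)$ (the maximal probability that the argmax lies in one covering set) and $\mu(r)$ (the expected number of covering sets containing the argmax), the variance is controlled by an integral in $r$. So I would proceed as follows.

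First, reduce to a finite problem: replace $A$ by a maximal $\epsilon$-net $A_\epsilon \subseteq A$ for small $\epsilon$, note that $X$ restricted to $A_\epsilon$ is a finite Gaussian field with $\sigma^2 = 1$, prove the bound there uniformly in $\epsilon$, and pass to the limit using continuity of the field (so that $M(A_\epsilon) \to M(A)$ almost surely and in $L^2$, using \eqref{as1} to get the needed uniform integrability via Lemma~\ref{max}). On $A_\epsilon$ the index set $S = A_\epsilon$ has size comparable, up to dimension-dependent constants, to $N(A) = N(A,1)$ when $\epsilon$ is a fixed small constant; in fact I would just take $\epsilon = 1$ throughout, so $S = A_1$ and $|S| \le C(d) N(A)$, and more importantly $|S| \ge N(A)$ trivially (a $1$-net is a $1$-net). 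Next, for each $r > 0$, since $\phi$ is decreasing with $\phi(s) \to 0$, the condition $R(\bu,\bv) \ge r$ forces $|\bu - \bv| \le \phi^{-1}(r)$ where I interpret $\phi^{-1}(r) := \sup\{s : \phi(s) \ge r\}$ (finite by \eqref{as2}). So I would take $\mc(r)$ to be the collection of balls of radius $\phi^{-1}(r)$ centered at the points of a maximal $\phi^{-1}(r)$-separated subset of $S$; any two points of $S$ with covariance $\ge r$ lie within distance $\phi^{-1}(r)$ of a common center, hence in a common $D \in \mc(r)$, so the hypothesis of Theorem~\ref{hyper2} holds.

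The substance is then to bound $\rho(r)$ and $\mu(r)$. For $\mu(r)$: a point $\bu \in S$ lies in $D \in \mc(r)$ only if the center of $D$ is within distance $\phi^{-1}(r)$ of $\bu$, and the centers are $\phi^{-1}(r)$-separated, so a volume-packing argument in $\rr^d$ bounds the number of such $D$ by a constant $C(d)$; hence $\mu(r) \le C(d)$ uniformly in $r$. For $\rho(r)$: this is where I expect the real work. I need $\pp(I^0 \in D)$ to be small for each $D \in \mc(r)$, and the only handle I have is that $D$ is contained in a ball of radius $\phi^{-1}(r)$, so $\pp(I^0 \in D) \le \pp(M(D) \ge M(S))$, and I should estimate this by comparing $\ee(M(D))$ against $\ee(M(S))$. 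By the assumption \eqref{as1} and a chaining/covering estimate (splitting a ball of radius $\rho$ into $O(\rho^d)$ unit balls and using Lemma~\ref{max}-type bounds on the max of the block-maxima, which have Gaussian tails by Proposition~\ref{borell}), I get $\ee(M(D)) \le L + C(d)\sqrt{\log(\rho^d + 2)}$ with $\rho = \phi^{-1}(r)$. On the other hand, Sudakov minoration or just the existence of an $N(A)$-point $1$-net on which consecutive correlations are bounded away from $1$ (using \eqref{as2} again to find a scale at which correlations over unit distances are, say, $\le 1 - \delta$ — wait, that is not guaranteed, so instead I would use the crude lower bound $\ee(M(S)) \ge \ee(X_{\bu_0}) = 0$ is useless; rather use that $M(S) \ge M(S')$ for a sub-net $S'$ of well-separated points and invoke the Sudakov lower bound Lemma in Section~\ref{basicfacts}) gives $\ee(M(S)) \ge c\sqrt{\log N(A)}$ provided we can exhibit enough points of $A$ at pairwise distances where the variance of differences is bounded below; since $\phi(s) \to 0$ there is $s_0 = s_0(\phi)$ with $\phi(s_0) \le 1/2$, so points that are $s_0$-separated have $\ee(X_\bu - X_\bv)^2 \ge 2(1 - 1/2) = 1$, and an $s_0$-net of $A$ has $\ge c(d, s_0) N(A)$ points, yielding $\ee(M(S)) \ge c(\phi,d)\sqrt{\log N(A)}$. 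Combining, Proposition~\ref{borell} gives $\rho(r) \le \pp\bigl(M(D) - \ee M(D) \ge \tfrac12(\ee M(S) - \ee M(D))\bigr) + \pp\bigl(M(S) - \ee M(S) \le -\tfrac12(\cdots)\bigr) \le 2\exp\bigl(-c(\ee M(S) - \ee M(D))^2\bigr)$, which is small once $\ee M(S) \ge 2\ee M(D)$, i.e. once $\log N(A)$ dominates $L^2 + \log(\phi^{-1}(r)^d + 2)$.

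Finally, feed these into Theorem~\ref{hyper2}: $\var(M(S)) \le \int_0^1 \frac{2\mu(r)(1-\rho(r))}{-\log\rho(r)}\,dr \le \int_0^1 \frac{C(d)}{-\log\rho(r)}\,dr$. Split the integral at the threshold $r^*$ where $\phi^{-1}(r)$ becomes comparable to $N(A)^{c}$ for a small exponent $c = c(\phi,d)$: for $r \ge r^*$, $\phi^{-1}(r)$ is small, $\rho(r) \le N(A)^{-c'}$, so $-\log \rho(r) \ge c'\log N(A)$ and that part contributes $\le C/\log N(A)$; for $r < r^*$, I simply bound the integrand trivially by a constant times the length $r^* \le \phi(N(A)^{c})$ of the sub-interval (here I use that $r < r^*$ forces $\phi^{-1}(r) > N(A)^c$, i.e. $r < \phi(N(A)^c)$), which contributes $\le C\phi(N(A)^{C_2})$. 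Adding the two pieces gives exactly the claimed bound $C_1(\phi,d)(\phi(N(A)^{C_2(\phi,d)}) + 1/\log N(A))$. The main obstacle, as flagged, is making the $\rho(r)$ estimate clean and uniform: controlling $\ee(M(D))$ over balls of radius $\phi^{-1}(r)$ via \eqref{as1} and getting the matching lower bound $\ee(M(S)) \gtrsim \sqrt{\log N(A)}$ from Sudakov minoration requires care with the dimension- and $\phi$-dependent constants, and one must track how $\phi^{-1}$ enters so that the final split in $r$ produces the stated form; everything else (the covering construction, the packing bound for $\mu$, the passage from the net to $A$) is routine.
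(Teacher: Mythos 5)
Your proposal is correct and takes essentially the same route as the paper: apply Theorem~\ref{hyper2} to a finite net, cover by balls of radius $\sim\phi^{-1}(r)$, bound $\mu(r)$ by a volume-packing argument, bound $\rho(r)$ by comparing $m(D)$ against $m(A)$ via Proposition~\ref{borell} together with the two-sided estimate $c_1(\phi,d)\sqrt{\log N(A)}\le m(A)\le c_2(\phi,d)\sqrt{\log N(A)}$ (which the paper isolates as Lemma~\ref{twobd} and proves exactly by the covering/\eqref{as1}/Sudakov arguments you sketch), and split the $r$-integral at a single threshold scale $s=N(A)^{C_2}$. The only differences are cosmetic: the paper fixes one $r=\phi(s)$ and uses monotonicity of $\rho$ for $r'>r$ rather than constructing $\mc(r)$ for every $r$, and leaves the reduction from the continuous field to a finite index set implicit.
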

\noindent {\it Remarks.} The first observation is that if $\phi(s)$ decreases at least as fast as $1/\log s$, then the above result clearly shows that
\[
\var(M(A))\le \frac{C(\phi,d)}{\log N(A)},
\]
where $C(\phi,d)$ is some constant that depend only on $\phi$ and $d$. In particular, it gives a broad generalization of the classical results about fluctuations in one dimension \cite{berman64, pickands67, pickands69, pickands69a, mittalylvisaker75}. 
An additional observation is that the first term in the bound can dominate the second only if $\phi(s)$ decreases slower than $1/\log s$. 

Before we embark on the proof of Theorem \ref{fieldmax}, we need some simple upper and lower bounds on the expected value of $M(A)$. 
\begin{lmm}\label{twobd}
Under \eqref{as1} and \eqref{as2}, for any Borel set $A\subseteq \rr^d$ such that $N(A) \ge 2$, we have
\[
c_1(\phi,d)\sqrt{\log N(A)} \le m(A) \le c_2(\phi,d)\sqrt{\log N(A)},
\]
where $c_1(\phi,d)$ and $c_2(\phi,d)$ are positive constants that depend only on the function~$\phi$ and the dimension $d$. 
\end{lmm}
\begin{proof}
The upper bound follows easily  from  a combination of  assumption~\eqref{as1}, the tail bound from Proposition \ref{borell} for the maximum of the field in unit balls, and an argument similar to the proof of Lemma \ref{max}. 

For the lower bound, first let $s > 1$ be a number such that $\phi(s) < 1/2$. Such an $s$ exists by the assumption that $\lim_{s\ra \infty} \phi(s) = 0$. Next, let $B$ be a $1$-net of $A$ and $D$ be an $s$-net of $B$. For each $\bx \in D$, the $s$-ball around $\bx$ can contain at most $k$ points of $B$, where $k$ is a fixed number that depends only on $s$ and the dimension $d$. Thus,
\[
|D| \ge \frac{|B|}{k} = \frac{N(A)}{k}. 
\]
Since $\phi(s) < 1/2$ and $\ee(X_\bu^2)=1$ for each $\bu$, by the Sudakov minoration technique from Section \ref{basicfacts} we have
\[
m(A) \ge m(D) \ge C \sqrt{\log |D|} \ge c_1(\phi,d) \sqrt{\log N(A)},
\]
where $D$ is a universal constant and $c_1(\phi,d)$ is a constant depending only on the function $\phi$ and the dimension $d$.
\end{proof}
\begin{proof}[Proof of Theorem \ref{fieldmax}]
Let $c_1$ and $c_2$ be the constants from Lemma \ref{twobd}. Put
\[
s := N(A)^{\frac{1}{8}(c_1/c_2)^2},
\]
and assume that $N(A)$ is so large that $s > 2$. Let $r = \phi(s)$. Take any maximal $s$-net of $A$, and let $\mc(r)$ be the set of $2s$-balls around the points in the net. It is easy to verify using the definition of $s$ and the decreasing nature of $\phi$, that $\mc(r)$ is a covering of $A$ satisfying the conditions of Theorem~\ref{hyper2}. Now take any $D\in \mc(r)$. Since $D$ is a $2s$-ball and $s > 2$, by Lemma \ref{twobd} we have
\[
m(D) \le c_2\sqrt{\log 2s}\le c_2\sqrt{2\log s} = \frac{c_1}{2}\sqrt{\log N(A)}.
\]
Also, by Lemma \ref{twobd}, we have $m(A) \ge c_1\sqrt{\log N(A)}$. Thus, using the notation of Theorem \ref{hyper2}, we have by Proposition \ref{borell} that
\begin{align*}
p(D) &\le \pp\biggl(M(D) \ge m(D) + \frac{m(A)-m(D)}{2}\biggr) \\
&\qquad + \pp\biggl(M(A) \le m(A) - \frac{m(A)-m(D)}{2}\biggr)\\
&\le 2\exp\biggl(-\frac{(m(A)-m(D))^2}{8}\biggr)\\
&\le 2\exp\biggl(-\frac{c_1^2\log N(A)}{32}\biggr).
\end{align*}
Since the above bound does not depend on $D$, it serves as a bound on $\rho(r)$. Let us now get a bound on $\mu(r)$. Take any $\bu \in A$. Then the center of any $D\in \mc(r)$ that contains $\bu$ is a point in $B(\bu, 2s)$. The centers are mutually separated by distance more than $s$. Hence, the number of $D\in \mc(r)$ that can contain $\bu$ is bounded by $N(B(\bu,2s), s)$, which, by scaling symmetry, is equal to $N(B(\mathbf{0}, 2), 1)$. Since $\mu(r)$ is the expected number of elements of $\mc(r)$ that contain the maximizer of $\bbx$ in $A$, therefore we have that $\mu(r) \le c_3$, where $c_3$ is a constant that depends only on the dimension $d$. Combining the bounds, we see that whenever $N(A) \ge c_4$, we have
\[
\frac{\mu(r)}{|\log \rho(r)|} \le \frac{c_5}{\log N(A)},
\]
where $c_4$ and $c_5$ are constants depending only on $\phi$ and $d$. Note that we have this bound only for one specific value of $r$ defined above. Now, if $r' > r$, and we define $\mc(r')$ the same way as we defined $\mc(r)$, then clearly $\mc(r')$ would also be a cover of $A$ satisfying the requirements of Theorem \ref{hyper2}, and we would have $\rho(r')\le \rho(r)$. Noting this, and the fact that $|(1-x)/\log x|\le 1$ for all $x\in (0,1)$, we have by Theorem \ref{hyper2} that
\[
\var(M(A))\le c_6 \biggl(r + \frac{1}{\log N(A)}\biggr),
\]
for some constant $c_6$ depending only on $\phi$ and $d$. Of course this holds only if $N(A)\ge c_4$, but this condition can now be dropped by increasing the value of $c_6$, since we always have $\var(M(A))\le 1$. Plugging in the value of $r$, the proof is done.
\end{proof}

\section{Open problems}
In some sense, this paper raises more questions than it solves. Some of these problems are listed below. At present, the author does not know how to solve any of these.
\begin{enumerate}
\item Prove chaos in the original Sherrington-Kirkpatrick model, where $\xi(x) = x^2$. This is perhaps the most important open question that may be solvable by suitable extensions of the methods of this paper. 
\item Prove chaos in directed polymers in dimensions $\ge 3$. Certain parts of the current proof do not work in higher than $2$ dimensions, but other parts are okay. Since the Benjamini-Kalai-Schramm approach is inherently dimension-free, this seems to be a more doable open problem. 
\item Improve the fluctuation exponent in directed polymers. The current proof via hypercontractivity gives only a $\log n$ correction. Although this suffices to prove chaos, it is not satisfactory.
\item Improve the variance bound in the discrete Gaussian free field with zero boundary condition. Although the case of the torus is interesting, the zero boundary condition is the more important one. Also, even in the torus, is $\log \log n$ the correct order? The author thinks $O(1)$ is more likely to be the right answer. 
\item Show that chaos implies multiple peaks even when the correlations are not all nonnegative. This involves getting a bound on the second moment of $R(I^0,I^t)$, which we do not know how to derive at present.
\item Find a suitable multiple peaks condition that is actually equivalent to chaos and superconcentration. There is a possibility that there does not exist any such condition.
\item Find a more intuitive and less analytical (rigorous) proof of the equivalence of superconcentration and chaos. 
\item A very interesting question in the context of multiple peaks is the issue of `bridges': do the multiple peaks exist as disconnected islands, or do there exist `bridges' that allow one to move from one peak to another without `climbing down'? This question is particularly relevant for the Kauffman-Levin fitness model, for obvious evolutionary implications. Our method of proving the existence of multiple peaks suggests an obvious way to prove the existence of bridges, but the author does not yet know how to carry out the program for the $NK$ model.
\end{enumerate}

\vskip.2in
\noindent{\bf Acknowledgments.} The author thanks Persi Diaconis, Michel Talagrand, Daniel Fisher,   Michel Ledoux, David Aldous, Itai Benjamini, Steve Evans,  
Elchanan Mossel, Boris Tsirelson,  Oded Schramm, Alexei Borodin, Ivan Nourdin, 
Jean-Philippe Bouchaud, Rongfeng Sun, and Giorgio Parisi 
for a number of 
useful discussions and communications. Special thanks go to Michel Ledoux for a major simplification of the proof of Theorem \ref{varconv}, to Rongfeng Sun for pointing out a serious mistake in the proof of Theorem \ref{poly} in the first draft (corrected by the author in the current manuscript), and to Itai Benjamini and Michel Talagrand for suggesting that the sensitivity to noise in glassy systems is a problem worth investigating.


\begin{thebibliography}{99}

\bibitem{adlertaylor07} {\sc Adler, R.~J.} and {\sc Taylor, J.~E. (2007).} {\it Random fields and geometry.} Springer Monographs in Mathematics. {\it Springer, New York.}

\bibitem{alr87} {\sc Aizenman, M., Lebowitz, J.~L.,} and {\sc Ruelle, D. (1987).} Some rigorous results on the Sherrington-Kirkpatrick spin glass model. {\it Comm. Math. Phys.} {\bf 112} no. 1, 3--20. 



\bibitem{albeveriozhou96} {\sc Albeverio, S.} and {\sc  Zhou, X.~Y. (1996).} A martingale approach to directed polymers in a random environment. {\it J. Theoret. Probab.} {\bf 9} no. 1, 171--189. 







\bibitem{aneetal00} {\sc An\'e, C., Blach\`ere, S.,  Chafa\"i, D., Foug\`eres, P., Gentil, I., Malrieu, F., Roberto, C.} and {\sc Scheffer, G. (2000).}  Sur les in\'egalit\'es de Sobolev logarithmiques. Panoramas et Synth\`eses, {\bf 10}. {\it Soci\'et\'e Math\'ematique de France, Paris}.

\bibitem{aubrun05} {\sc Aubrun, G. (2005).} A sharp small deviation inequality for the largest eigenvalue of a random matrix. {\it S\'eminaire de Probabilit\'es XXXVIII,} 320--337, Lecture Notes in Math., {\bf 1857}, {\it Springer, Berlin}.



\bibitem{bks01} {\sc Benjamini, I., Kalai, G.,} and {\sc  Schramm, O. (2001).} Noise sensitivity of Boolean functions and applications to percolation. {\it Inst. Hautes \'Etudes Sci. Publ. Math. No. 90 (1999),} 5--43.


\bibitem{bks03} {\sc Benjamini, I., Kalai, G.,} and {\sc Schramm, O. (2003).} First passage percolation has sublinear distance variance. {\it Ann. Probab.} {\bf 31} no. 4, 1970--1978.

\bibitem{berman64} {\sc Berman, S.~M. (1964).} Limit Theorems for the Maximum Term in Stationary Sequences. {\it Ann. Math. Statist.,} {\bf 35} no. 2, 502--516. 


\bibitem{biggins77} {\sc Biggins, J.~D. (1977).} Chernoff's theorem in the branching random walk. {\it J. Appl. Probab.} {\bf 14} 630--636. 


\bibitem{bolthausen89} {\sc Bolthausen, E. (1989).} A note on the diffusion of directed polymers in a random environment. {\it Comm. Math. Phys.} {\bf 123} no. 4, 529--534.

\bibitem{bdg01} {\sc Bolthausen, E., Deuschel, J.-D.,} and {\sc Giacomin, G. (2001).} Entropic repulsion and the maximum of the two-dimensional harmonic crystal. {\it Ann. Probab.} {\bf 29} no. 4, 1670--1692. 


\bibitem{borell75} {\sc Borell, C. (1975).} The Brunn-Minkowski inequality in Gauss space. {\it Invent. Math.,} {\bf 30} 205--216. 


\bibitem{braymoore87} {\sc Bray, A.~J.} and {\sc Moore, M.~A. (1987).} Chaotic Nature of the Spin-Glass Phase. {\it Phys. Rev. Lett.,} {\bf 58} no. 1, 57--60. 


\bibitem{carmonahu02} {\sc Carmona, P.} and {\sc Hu, Y. (2002).} On the partition function of a directed polymer in a Gaussian random environment. {\it Probab. Theory Related Fields} {\bf 124} no. 3, 431--457. 


\bibitem{carmonahu04} {\sc Carmona, P.} and {\sc Hu, Y. (2004).} Fluctuation exponents and large deviations for directed polymers in a random environment. {\it Stochastic Process. Appl.} {\bf 112} no. 2, 285--308. 

\bibitem{catorgroeneboom06} {\sc Cator, E.} and {\sc  Groeneboom, P. (2006).} Second class particles and cube root asymptotics for Hammersley's process. {\it Ann. Probab.} {\bf 34} no. 4, 1273--1295. 


\bibitem{chen82} {\sc Chen, L.~H.~Y. (1982).} An inequality for the multivariate normal distribution. {\it J. Multivariate Anal.} {\bf 12} 306--315. 

\bibitem{chernoff81}
{\sc Chernoff, H. (1981).} A note on an inequality involving the normal distribution. {\it Ann. Probab.} {\bf 9} 533--535. 
 

\bibitem{cometsneveu95} {\sc Comets, F.} and {\sc Neveu, J. (1995).} The Sherrington-Kirkpatrick model of spin glasses and stochastic calculus: the high temperature case. {\it Comm. Math. Phys.} {\bf 166} no. 3, 549--564.


\bibitem{cometsshigayoshida04} {\sc Comets, F., Shiga, T.,} and {\sc Yoshida, N. (2004).}  Probabilistic analysis of directed polymers in a random environment: a review. {\it Stochastic analysis on large scale interacting systems,} 115--142, {\it Adv. Stud. Pure Math.,} {\bf 39} Math. Soc. Japan, Tokyo.

\bibitem{cometsyoshida06} {\sc Comets, F.} and {\sc  Yoshida, N. (2006).} Directed polymers in random environment are diffusive at weak disorder. {\it Ann. Probab.} {\bf 34} no. 5, 1746--1770.



\bibitem{crisantietal92} {\sc Crisanti, A., Paladin, G., Sommers, H.-J.} and {\sc Vulpiani, A. (1992).}  Replica trick and fluctuations in disordered systems. {\it J. Phys. I France} {\bf 2} 1325--1332.

\bibitem{durrettlimic03} {\sc Durrett, R.} and {\sc Limic, V. (2003).} Rigorous results for the $NK$ model. {\it Ann. Probab.} {\bf 31} no. 4, 1713--1753. 

\bibitem{evanssteinsaltz02} {\sc Evans, S.~N.} and {\sc Steinsaltz, D. (2002).} Estimating some features of $NK$ fitness landscapes. {\it Ann. Appl. Probab.} {\bf 12} no. 4, 1299--1321.

\bibitem{fisherhuse91} {\sc Fisher, D.~S.} and {\sc Huse, D.~A. (1991).} Directed paths in a random potential. {\it Phys. Rev. B,}  {\bf 43} no. 13, 10728--10742.

\bibitem{frohlichzegarlinski87} {\sc Fr\"ohlich, J.} and {\sc Zegarli\'nski, B. (1987).} Some comments on the Sherrington-Kirkpatrick model of spin glasses. {\it Comm. Math. Phys.} {\bf 112} no. 4, 553--566.

\bibitem{gardner85} {\sc Gardner, E. (1985).} Spin glasses with $p$-spin interactions. {\it Nuclear Phys. B} {\bf 257} no. 6, 747--765.

\bibitem{giacomin00} {\sc Giacomin, G. (2000).}  Anharmonic lattices, random walks and random interfaces. Recent Research 
Developments in Statistical Physics, Transworld Research, {\bf I}, 97--118. 

\bibitem{gordon41} {\sc Gordon, R.~D. (1941).} Values of Mills' ratio of area to bounding ordinate and of the normal probability integral for large values of the argument. {\it Ann. Math. Statist.} {\bf 12} 364--366. 

\bibitem{gravnertracywidom01} {\sc Gravner, J.,  Tracy, C.~A.,} and {\sc Widom, H. (2001).} Limit theorems for height fluctuations in a class of discrete space and time growth models. {\it J. Statist. Phys.} {\bf 102} no. 5-6, 1085--1132.

\bibitem{gross75} {\sc Gross, L. (1975).} Logarithmic Sobolev inequalities. {\it Amer. J. Math.} {\bf 97} no. 4, 1061--1083.

\bibitem{grossmezard84} {\sc Gross, D.} and {\sc M\'ezard, M. (1984).} The simplest spin glass. {\it Nuclear Phys. B} {\bf 240} 431--452. 

\bibitem{guerra02} {\sc Guerra, F.} and {\sc Toninelli, F.~L. (2002).} The thermodynamic limit in mean field spin glass models. {\it Comm. Math. Phys.} {\bf 230} 71--79. 

\bibitem{guerra03} {\sc Guerra, F. (2003).} Broken replica symmetry bounds in the mean field spin glass model. {\it Comm. Math. Phys.} {\bf 233} 1--12. 

\bibitem{guionnetzegarlinski03} {\sc Guionnet, A.} and {\sc Zegarlinski, B. (2003).} Lectures on logarithmic Sobolev inequalities. {\it S\'eminaire de Probabilit\'es, XXXVI,} 1--134, Lecture Notes in Math., {\bf 1801}, {\it Springer, Berlin}. 

\bibitem{houdre95} {\sc Houdr\'e, C. (1995).} Some applications of covariance identities and inequalities to functions of multivariate normal variables. {\it J. Amer. Statist. Assoc.} {\bf 90} no. 431, 965--968. 

\bibitem{houdrekagan95} {\sc Houdr\'e, C.} and {\sc  Kagan, A. (1995).} Variance inequalities for functions of Gaussian variables. {\it J. Theoret. Probab.} {\bf 8} 23--30. 

\bibitem{husehenleyfisher85} {\sc Huse, D.~A., Henley, C.~L.} and {\sc Fisher, D.~S. (1985).} Huse, Henley and Fisher respond. {\it Phys. Rev. Lett.} {\bf 55} no. 26, 2924.


\bibitem{hwafisher94} {\sc Hwa, T.} and {\sc Fisher, D.~S. (1994).} Anomalous fluctuations of directed polymers in random media. {\it Phys. Rev. B,} {\bf 49} no. 5, 3136--3155. 


\bibitem{imbriespencer88} {\sc  Imbrie, J.~Z.} and {\sc Spencer, T. (1988).} Diffusion of directed polymers in a random environment. {\it J. Statist. Phys.} {\bf 52} no. 3--4, 609--626.


\bibitem{johansson00} {\sc Johansson, K. (2000).}  Shape fluctuations and random matrices. {\it Comm. Math. Phys.} {\bf 209} no. 2, 437--476.


\bibitem{johansson01} {\sc Johansson, K. (2001).}  Discrete orthogonal polynomial ensembles and the Plancherel measure. {\it Ann. Math. (2),} {\bf 153}  no. 1, 259--296.

\bibitem{kauffmanlevin87} {\sc Kauffman, S.~A.} and {\sc Levin, S.~A. (1987).} Towards a general theory of adaptive walks on rugged landscapes. {\it J. Theoret. Biol.} {\bf 128} 11--45. 


\bibitem{kifer97} {\sc Kifer, Y. (1997).} The Burgers equation with a random force and a general model for directed polymers in random environments. {\it Probab. Theory Related Fields} {\bf  108} no. 1, 29--65. 




\bibitem{kondor83} {\sc Kondor, I. (1983).} Parisi's mean-field solution for spin glasses as an analytic 
continuation in the replica number.  {\it J. Phys. A,} {\bf 16} L127--L131.

\bibitem{llr83} {\sc Leadbetter, M.~R., Lindgren, G.} and {\sc Rootz\'en, H. (1983).} {\it Extremes and related properties of random sequences and processes.} Springer Series in Statistics. {\it Springer-Verlag, New York - Berlin.} 

\bibitem{ledoux01} {\sc Ledoux, M. (2001).} {\it The Concentration of Measure Phenomenon. } Amer. Math. Soc., Providence, RI.


\bibitem{ledoux03} {\sc Ledoux, M. (2003).} A remark on hypercontractivity and tail inequalities for the largest eigenvalues of random matrices. {\it S\'eminaire de Probabilit\'es XXXVII,} 360--369, Lecture Notes in Math., {\bf 1832},{\it Springer, Berlin}.

\bibitem{limicpemantle04} {\sc Limic, V.} and {\sc Pemantle, R. (2004).}
More rigorous results on the Kauffman-Levin model of evolution. {\it Ann. Probab.} {\bf 32} no. 3A, 2149--2178. 

\bibitem{mckayetal82} {\sc McKay, S.~R., Berger, A.~N.,} and {\sc Kirkpatrick, S. (1982).} Spin-Glass Behavior in Frustrated Ising Models with Chaotic Renormalization-Group Trajectories. {\it Phys. Rev. Lett.} {\bf 48}, 767--770. 


\bibitem{mehta91} {\sc Mehta, M. L.} (1991). {\it Random Matrices.} 2nd ed. Academic Press, Boston, MA.

\bibitem{mezard90} {\sc M\'ezard, M. (1990).}  
On the glassy nature of random directed polymers 
in two dimensions. {\it J. Phys. France} {\bf 51} 1831--1846.

\bibitem{mezardetal87} {\sc M\'ezard, M., Parisi, G.,} and {\sc Virasoro, M.~A. (1987).} {\it Spin glass theory and beyond.} World Scientific Lecture Notes in Physics, {\bf 9}. {\it World Scientific Publishing Co., Inc., Teaneck, NJ.}

\bibitem{mittalylvisaker75} {\sc Mittal, Y.} and  {\sc Ylvisaker, D. (1975).} Limit distributions for the maxima of stationary Gaussian processes. {\it Stoch. Proc. Appl.} {\bf 3} 1--18.


\bibitem{nash58} {\sc Nash, J. (1958).} Continuity of solutions of parabolic and elliptic equations. {\it Amer. J. Math.} {\bf 80} no. 4, 931--954. 


\bibitem{nelson66} {\sc Nelson, E. (1966).} A quartic interaction in two dimensions. {\it Mathematical 
Theory of Elementary Particles (Proc. Conf., Dedham, Mass., 1965),} M.I.T. 
Press, Cambridge, Mass.,  69--73. 
\bibitem{nelson73} {\sc Nelson, E. (1973).} The free Markoff field. {\it J. Functional Analysis} {\bf 12}  211--227. 

\bibitem{nourdinviens08} {\sc Nourdin, I.} and {\sc Viens, F.~G. (2008).} Density estimates and concentration inequalities with Malliavin calculus. Available at {\tt http://arxiv.org/pdf/0808.2088v2}

\bibitem{panchenkotalagrand07} {\sc Panchenko, D.} and {\sc Talagrand, M. (2007).} On the overlap in the multiple spherical SK models. {\it Ann. Probab.} {\bf 35} no. 6, 2321--2355.

\bibitem{pickands67} {\sc Pickands, J.,~III (1967).} Maxima of stationary Gaussian processes. {\it Z. Wahrs. und Verw. Gebiete} {\bf 7} 190--223.

\bibitem{pickands69} {\sc Pickands, J.,~III (1969).} 
Upcrossing probabilities for stationary Gaussian processes. {\it Trans. Amer. Math. Soc.} {\bf 145} 51--73.

\bibitem{pickands69a} {\sc Pickands, J.,~III (1969).}  Asymptotic properties of the maximum in a stationary Gaussian process. {\it Trans. Amer. Math. Soc.} {\bf 145} 75--86.

\bibitem{shcherbina97} {\sc Shcherbina, M.~V. (1997).} On the replica-symmetric solution for the Sherrington-Kirkpatrick model. {\it Helv. Phys. Acta } {\bf 70} 838--853.


\bibitem{sheffield07} {\sc Sheffield, S. (2007).} Gaussian free fields for mathematicians. {\it Probab. Theory Related Fields} {\bf  139} no. 3-4, 521--541.


\bibitem{sk75} {\sc Sherrington, D.} and {\sc Kirkpatrick, S. (1975).} Solvable model of a spin glass. {\it Phys. Rev. Lett.} {\bf 35} 1792--1796. 





\bibitem{silveirabouchaud04} {\sc da Silveira, R.~A.} and {\sc Bouchaud, J.-P. (2004).} Temperature and Disorder Chaos in Low Dimensional Directed Paths. {\it Phys. Rev. Lett.,}  {\bf 93} no. 1, 015901.


\bibitem{sinai95} {\sc Sinai, Y.~G. (1995).} A remark concerning random walks with random potentials. {\it Fund. Math.} {\bf 147} no. 2, 173--180.


\bibitem{slepian62} {\sc Slepian, D. (1962).} The one-sided barrier problem for Gaussian noise. {\it Bell System Tech. J.}  {\bf 41} 463--501. 


\bibitem{songzhou96} {\sc Song, R.} and {\sc Zhou, X.~Y. (1996).} A remark on diffusion on directed polymers in random environment. {\it J. Statist. Phys.} {\bf 85} no. 1--2, 277--289.

\bibitem{sudakovtsirelson74} {\sc Sudakov, V.~N.} and  {\sc Cirelson [Tsirelson], B.~S. (1974).} Extremal properties of half-spaces for spherically invariant measures. (Russian) Problems in the theory of probability distributions, II, {\it Zap. Nauchn. Sem. Leningrad. Otdel. Mat. Inst. Steklov. (LOMI)} {\bf 41}, 14--24, 165.

\bibitem{talagrand94} {\sc Talagrand, M. (1994).} On Russo's approximate zero-one law. {\it Ann. Probab.} {\bf 22} 1576--1587. 

\bibitem{talagrand95} {\sc Talagrand, M. (1995).} Concentration of measure and isoperimetric inequalities in product spaces. {\it Inst. Hautes \'Etudes Sci. Publ. Math.} {\bf 81} 73--205. 

\bibitem{talagrand98} {\sc Talagrand, M. (1998).} The Sherrington-Kirkpatrick model: a challenge to math- 
ematicians. {\it Probab. Theory Relat. Fields} {\bf 110} 109--176. 


\bibitem{talagrand03} {\sc Talagrand, M. (2003).}
\newblock {\em Spin glasses: a challenge for mathematicians. Cavity and mean field models}.
\newblock Springer-Verlag, Berlin.

\bibitem{talagrand05} {\sc Talagrand, M. (2005).} {\it The generic chaining. Upper and lower bounds of stochastic processes.} Springer-Verlag, Berlin, 2005.

\bibitem{talagrand06} {\sc Talagrand, M. (2006).} The Parisi formula. {\it Ann. Math. (2)} {\bf 163} 
no 1, 221--263.

\bibitem{tindel05} {\sc Tindel, S. (2005).} On the stochastic calculus method for spins systems. {\it Ann. Probab.} {\bf 33} no. 2, 561--581.


\bibitem{tracywidom94a} {\sc Tracy, C.~A.} and {\sc Widom, H. (1994).} Level-spacing distributions and the Airy kernel. {\it Comm. Math. Phys.} {\bf 159} no. 1, 151--174. 

\bibitem{tracywidom94b} {\sc Tracy, C.~A.} and {\sc  Widom, H. (1994).} Fredholm determinants, differential equations and matrix models. {\it Comm. Math. Phys.} {\bf 163} no. 1, 33--72. 
  
\bibitem{tracywidom96} {\sc Tracy, C.~A.} and {\sc Widom, H. (1996).} On orthogonal and symplectic matrix ensembles. {\it Comm. Math. Phys.} {\bf 177}  no. 3, 727--754.


\bibitem{tis76} {\sc Tsirelson, B.~S., Ibragimov, I.~A.,} and {\sc Sudakov, V.~N. (1976).} Norms of Gaussian sample functions. {\it Proceedings of the Third Japan-USSR Symposium on Probability Theory (Tashkent, 1975),} {\bf 550}, 20--41. 

\bibitem{aizenmanwehr90} {\sc Wehr, J.} and {\sc Aizenman, M. (1990).} Fluctuations of extensive functions of quenched random couplings. {\it J. Statist. Phys.} {\bf 60} no. 3-4, 287--306.


\bibitem{zhang87} {\sc Zhang, Y.-C. (1987).} 
Ground state instability of a random system. {\it Phys. Rev. Lett.,} {\bf 59} no. 19, 2125--2128.


\end{thebibliography}
\end{document}